\DeclareMathAlphabet{\mathpzc}{OT1}{pzc}{m}{it}
\renewcommand{\subsection}[1]{\vspace{.18in}
\par\noindent\addtocounter{subsection}{1}
\setcounter{equation}{0}{\bf\thesubsection.\hspace{5pt}#1}}
\theoremstyle{definition}
\theoremstyle{plain}
\newtheorem{Prop}[subsection]{Proposition}
\newtheorem{Thm}[subsection]{Theorem}
\newtheorem{Not}[subsection]{Notation}
\newtheorem{Lem}[subsection]{Lemma}
\newtheorem{Coro}[subsection]{Corollary}
\numberwithin{equation}{subsection}
\newcommand{\tA}{{}^t\!A}
\newcommand{\tB}{{}^t\!B}
\newcommand{\tC}{{}^t\!C}
\newcommand{\tX}{{}^t\!X}
\newcommand{\tY}{{}^t\!Y}
\newcommand{\tw}{{}^t\!w}
\newcommand{\han}{\subseteq}
\newcommand{\bse}{\boldsymbol{e}}
\newcommand{\bsa}{{\mathbf{a}}}
\newcommand{\bsb}{\mathbf{b}}
\newcommand{\lan}{\langle}
\newcommand{\ran}{\rangle}
\newcommand{\dleb}{\left[\!\!\left[}
\newcommand{\leb}{\left[}
\newcommand{\drib}{\right]\!\!\right]}
\newcommand{\rib}{\right]}
\def\lr#1{\langle #1\rangle}
\def\br#1{\{ #1 \}}
\def\bpa#1#2{\left({#1\atop #2}\right)}
\def\ddet#1{|\!| #1 |\!|}
\def\ggp#1#2{\left[\kern-3.2pt\left[{#1\atop #2}\right]\kern-3.2pt\right]}
\def\fS{{\frak S}}
\def\fB{{\frak B}}
\def\fC{{\frak C}}
\def\fka{{\frak a}}
\newcommand{\msX}{\mathpzc X}
\newcommand{\msY}{\mathpzc Y}
\newcommand{\msZ}{\mathpzc Z}
\newcommand{\msD}{\mathscr D}\newcommand{\afmsD}{{\mathscr D}^\vtg}
\newcommand{\affSr}{{\fS_{\vtg,r}}}
\newcommand{\afHr}{{\sH_\vtg(r)}}
\def\sH{{\mathcal H}}
\def\sK{{\mathcal K}_\mbz(n)}
\def\sKq{{\mathcal K}_\mbq(n)}
\def\hsKq{\widehat{\mathcal K}_\mbq(n)}
\def\tisK{\ti{\mathcal K}(n)}
\def\sL{{\mathcal L}}
\def\sM{{\mathcal M}}
\def\sO{{\mathcal O}}
\def\sS{{\mathcal S}}
\def\sU{{\mathcal U}}
\def\sV{{\mathcal V}}
\def\sW{{\mathcal W}}
\def\sZ{{\mathcal Z}}
\newcommand{\vtg}{{\!\vartriangle\!}}
\newcommand{\Hall}{{{\mathfrak H}_\vtg(n)}}
\newcommand{\Hallz}{{{\mathfrak H}_\vtg(n)_\mbz}}
\newcommand{\Hallq}{{{\mathfrak H}_\vtg(n)_\mbq}}
\newcommand{\Hallzop}{{{\mathfrak H}_\vtg(n)_\mbz^{\mathrm{op}}}}
\newcommand{\bfHall}{{\boldsymbol{\mathfrak H}_\vtg(n)}}
\newcommand{\Hallpi}{\bfHall^{\geq 0}}
\newcommand{\Hallp}{\Hall^{\geq 0}}
\def\field{{\mathbb F}}
\def\rep{{\rm {\bf Rep}}}
\newcommand{\mnmod}{\!\!\!\mod\!}
\newcommand{\tri}{\triangle(n)}
\newcommand{\Bz}{\fB}
\newcommand{\Vz}{\sV_\mbz(n)}
\newcommand{\Vq}{\sV_\mbq(n)}
\newcommand{\Vqp}{\sV^+_\mbq(n)}
\newcommand{\Vqm}{\sV^-_\mbq(n)}
\newcommand{\Vqz}{\sV^0_\mbq(n)}
\newcommand{\tiVzp}{\ti\sV^+_\mbz(n)}
\newcommand{\Vzp}{\sV^+_\mbz(n)}
\newcommand{\Vzm}{\sV^-_\mbz(n)}
\newcommand{\Vzz}{\sV^0_\mbz(n)}
\newcommand{\afgl}{\widehat{\frak{gl}}_n}
\newcommand{\afuglq}{\sU(\widehat{\frak{gl}}_n)}
\newcommand{\afuglk}{\sU_k(\widehat{\frak{gl}}_n)}
\newcommand{\afuglqz}{\sU^0(\widehat{\frak{gl}}_n)}
\newcommand{\afuglqp}{\sU^+(\widehat{\frak{gl}}_n)}
\newcommand{\afuglqpz}{\sU^{\geq 0}(\widehat{\frak{gl}}_n)}
\newcommand{\afuglqm}{\sU^-(\widehat{\frak{gl}}_n)}
\newcommand{\afuglz}{\sU_\mbz(\widehat{\frak{gl}}_n)}
\newcommand{\afuglzp}{\sU_\mbz^+(\widehat{\frak{gl}}_n)}
\newcommand{\afuglzm}{\sU_\mbz^-(\widehat{\frak{gl}}_n)}
\newcommand{\afuglzz}{\sU_\mbz^0(\widehat{\frak{gl}}_n)}
\newcommand{\afg}{\mathfrak n}
\newcommand{\afFn}{{\mathscr F_\vtg}}
\newcommand{\afE}{E^\vartriangle}
\newcommand{\cycn}{{{n}}}
\newcommand{\afSr}{{\mathcal S}_{\vtg}(\cycn,r)}
\newcommand{\afSrk}{{\mathcal S}_{\vtg}(\cycn,r)_{k}}
\newcommand{\afSrmbz}{{\mathcal S}_{\vtg}(\cycn,r)_{\mathbb Z}}
\newcommand{\afSrmbq}{{\mathcal S}_{\vtg}(\cycn,r)_{\mathbb Q}}
\newcommand{\afbse}{\boldsymbol e^\vartriangle}
\newcommand{\afmbnn}{\mathbb N_\vtg^{\cycn}}
\newcommand{\afmbzn}{\mathbb Z_\vtg^{\cycn}}
\newcommand{\afLa}{\Lambda_\vtg}
\newcommand{\afLanr}{\Lambda_\vtg(\cycn,r)}
\newcommand{\afThn}{\Theta_\vtg(\cycn)}
\newcommand{\aftiThn}{\widetilde\Theta_\vtg(\cycn)}
\newcommand{\afThnpm}{\Theta_\vtg^\pm(\cycn)}
\newcommand{\afThnp}{\Theta_\vtg^+(\cycn)}
\newcommand{\afThnm}{\Theta_\vtg^-(\cycn)}
\newcommand{\afThnr}{\Theta_\vtg(\cycn,r)}
\newcommand{\afTh}{\Theta_\vtg}
\newcommand{\afMnq}{M_{\vtg,\cycn}(\mathbb Q)}
\newcommand{\afMnz}{M_{\vtg,\cycn}(\mathbb Z)}
\newcommand{\thp}{\th^+}
\newcommand{\thm}{\th^-}
\newcommand{\etap}{\eta^+}
\newcommand{\etarp}{\eta_r^+}
\newcommand{\etarm}{\eta_r^-}
\newcommand{\dzr}{\dot{\zeta}_r}
\newcommand{\hzr}{\h{\zeta}_r}
\newcommand{\hz}{\h{\zeta}}
\newcommand{\ttm}{\mathtt{m}}
\newcommand{\ttn}{\mathtt{n}}
\def\leq{\leqslant}\def\geq{\geqslant}
\def\le{\leqslant}\def\ge{\geqslant}
\newcommand{\dt}{\delta}
\newcommand{\Dt}{\Delta}
\newcommand{\lm}{\longmapsto}
\newcommand{\map}{\mapsto}
\newcommand{\og}{\omega}
\newcommand{\tong}{\stackrel{\thicksim}{\,\rightarrow}}
\newcommand{\vi}{\varphi}
\newcommand{\up}{v}
 \newcommand{\nup}{v}
 \newcommand{\ep}{\varepsilon}
 \newcommand{\al}{\alpha}
 \newcommand{\bt}{\beta}
 \newcommand{\h}{\widehat}
 \newcommand{\ti}{\widetilde}
\newcommand{\zr}{\zeta_r}
\newcommand{\sg}{\sigma}
\newcommand{\Sg}{\Sigma}
\def\th{\theta}
\newcommand{\p}{\prec}
\newcommand{\pr}{\preccurlyeq}
\newcommand{\bop}{\bigoplus}
\newcommand{\op}{\oplus}
\newcommand{\ot}{\otimes}
\newcommand{\bfl}{\mathbf{0}}
\newcommand{\ol}{\overline}
\newcommand{\lra}{\longrightarrow}
\newcommand{\ra}{\rightarrow}
 \newcommand{\la}{{\lambda}}
 \newcommand{\La}{\Lambda}
 \newcommand{\mbn}{\mathbb N}
 \newcommand{\mbq}{\mathbb Q}
 \newcommand{\mbc}{\mathbb C}
 \newcommand{\mbz}{\mathbb Z}
  \newcommand{\bfd}{{\mathbf{d}}}
 \newcommand{\bfj}{{\mathbf{j}}}
\newcommand{\bfa}{{\mathbf{a}}}
\newcommand{\bfb}{{\mathbf{b}}}
\newcommand{\bfL}{{\mathbf{L}}}
\newcommand{\ga}{{\gamma}}
\newcommand{\Aut}{\operatorname{Aut}}
\newcommand{\End}{\operatorname{End}}
\newcommand{\spann}{\operatorname{span}}
\newcommand{\diag}{\operatorname{diag}}
\def\ro{\text{\rm ro}}
\def\co{\text{\rm co}}
\newcommand{\ul}{\underline}
\newcommand{\bfsg}{{\boldsymbol\sigma}}
\def\afsygr{{\fS_{\vtg,r}}}
\begin{document}
\title{BLM realization for $\sU_\mbz(\h{\frak{gl}}_n)$}
\author{Qiang Fu}
\address{Department of Mathematics, Tongji University, Shanghai, 200092, China.}
\email{q.fu@hotmail.com}


\thanks{Supported by the National Natural Science Foundation
of China, the Program NCET, Fok Ying Tung Education Foundation
and the Fundamental Research Funds for the Central Universities}

\begin{abstract}
In 1990, Beilinson--Lusztig--MacPherson (BLM) discovered a realization \cite[5.7]{BLM} for
quantum $\frak{gl}_n$ via a geometric setting of quantum Schur
algebras. We will generailze their result to the classical affine case. More precisely, we first use Ringel--Hall algebras to construct an integral form $\afuglz$ of $\afuglq$, where $\afuglq$ is
the universal enveloping algebra of the loop algebra $\afgl:=\frak{gl}_n(\mbq)\ot\mbq[t,t^{-1}]$. We then establish the stabilization property of multiplication for the classical affine Schur algebras. This stabilization property leads to the BLM realization of $\afuglq$ and $\afuglz$. In particular, we conclude that  $\afuglz$ is a $\mbz$-Hopf subalgebra of $\afuglq$. As a bonus, this method leads to an explicit $\mbz$-basis for $\afuglz$, and it yields explicit multiplication formulas between generators and basis elements for $\afuglz$. As an application, we will
prove that the natural algebra homomorphism from $\sU_\mbz(\h{\frak{gl}}_n)$ to the affine Schur algebra over $\mbz$ is surjective.
\end{abstract}
 \sloppy \maketitle
\section{Introduction}
The positive part  of the integral form of quantum enveloping algebras of finite type was realized by Ringel, see \cite{R90,R932}. This is
an important breakthrough for the structure of quantum groups. Almost at the same time,   A.A. Beilinson, G. Lusztig and R.
MacPherson  \cite[5.7]{BLM} realized the entire quantum $\frak{gl}_n$ over the rational function field $\mbq(\up)$ (with $\up$ being an indeterminate) via quantum Schur algebras. It is natural to ask how to realize the integral form of the entire quantum $\frak{gl}_n$. If this can be achieved, then one can relaize quantum $\frak{gl}_n$ over any field.
The remarkable BLM's work has important applications to the investigation of quantum Schur-Weyl reciprocity. The classical Schur-Weyl reciprocity relates representations of the general linear and symmetric groups over $\mbc$(cf. \cite{Weyl}). This reciprocity is also true over any field (cf. \cite{CL,CP,Donkin}). The quantum Schur-Weyl reciprocity at nonroots of unity was first formulated in \cite{Jimbo}. Using BLM's work, the integral quantum Schur-Weyl reciprocity was established in \cite{Du95,DPS}.

The BLM realization problem of quantum affine $\frak{gl}_n$ was investigated in \cite{DF09,DDF}. In particular, it was proven that the natural algebra homomorphism from quantum affine $\frak{gl}_n$ to affine  quantum Schur algebras over $\mbq(\up)$ is surjective in \cite{DDF} (cf. \cite{GV,Lu99}). Furthermore, the universal enveloping algebra $\afuglq$ of $\afgl$  was realized in \cite{DDF} using a modified BLM approach. However, in the affine case, there are still many important unsolved problems. For example, the stabilization property of multiplication for quantum Schur algebras given in \cite[4.2]{BLM} is the key to the BLM realization of quantum $\frak{gl}_n$. Furthermore, explicit multiplication formulas between generators and basis elements for the quantum enveloping algebra of $\frak{gl}_n$ were obtained in \cite[5.3]{BLM}.
But it seems hard to generalize these results to the quantum affine case. In addition, it is difficult to construct a suitable integral form for quantum affine $\frak{gl}_n$ such that the integral quantum affine Schur reciprocity holds (cf. \cite[3.8.6]{DDF}).

In this paper, we will solve the above problems in the classical case. First, we will use Ringel--Hall algebras to construct a free $\mbz$-submodule $\afuglz$ of the universal enveloping algebra $\afuglq$ of the loop algebra $\afgl$ in \S3.  We then
prove in \ref{stabilization property} a stabilization property for the structure constants of an affine Schur algebra, which is the affine analogue of \cite[4.2]{BLM}.  This  property allows us to construct an algebra $\sK$ without unity. Then we consider the completion algebra $\hsKq$ of $\sK$ and construct a $\mbz$-submodule $\Vz$ of $\hsKq$. We will prove in \ref{formula in Vz} and \ref{Vz-subalgebra} that $\Vz$ is a $\mbz$-subalgebra of $\hsKq$ with nice multiplication formulas, which is the affine analogue of \cite[5.3 and 5.5]{BLM}. Finally,
we will prove
in \ref{realization of afuglz}(1) that $\Vq:=\Vz\ot\mbq$ is isomorphic to $\afuglq$, which is the affine analogue of \cite[5.7]{BLM}. Furthermore, we will prove in \ref{realization of afuglz}(2) that $\afuglz$ is a $\mbz$-subalgebra of $\afuglq$ and $\Vz$ is the realization of $\afuglz$.
As a result, we derive an explicit $\mbz$-basis for $\afuglz$ together with explicit multiplication formulas between generators and arbitrary basis elements for $\afuglz$ (see \ref{formula in Vz},
\ref{basis-Vz} and \ref{realization of afuglz}). As a byproduct, we will  establish affine Schur-Weyl reciprocity at the integral level in \ref{affine Schur-Weyl duality}.


We organize this paper as follows. We recall the definition of Ringel--Hall algebras and extended Ringel--Hall algebras in \S2. Using Ringel--Hall algebas, we will construct a $\mbz$-submodule $\afuglz$ of $\afuglq$ in \S3. We review in \S4 the definition of affine quantum Schur algebras and generalize \cite[3.9]{BLM} to the affine case. Certain useful multiplication formulas for affine Schur algebras $\afSrmbz$ will be established in \S5. These formulas will be used to establish the stabilizaiton property for affine Schur algebras in \ref{stabilization property}. Then we use this property to construct an algebra $\sK$ without unity and derive some important multiplication formulas for the completion algebra $\hsKq$ of $\sK$ in \ref{formula in Vz}. In \ref{Vz-subalgebra}, we will use these formulas to prove that the $\mbz$-submodule $\Vz$ of $\hsKq$ constructed in \S8 is a $\mbz$-subalgebra of $\hsKq$. Finally, we will prove that
$\afuglq\cong\Vq$ and $\afuglz\cong\Vz$ in \ref{realization of afuglz}. Furthermore, we will prove that $\afuglz$ is a Hopf  algebra over $\mbz$ in \ref{Hopf Z-subalgebra}.  Using this realization for $\afuglz$, we will prove in \ref{affine Schur-Weyl duality} that the natural algebra homomorphism from $\sU_\mbz(\h{\frak{gl}}_n)$ to $\afSrmbz$ is surjective.

\begin{Not}\label{Notaion}\rm
For a positive integer $n$, let
$\afMnq$  be the set of all matrices
$A=(a_{i,j})_{i,j\in\mbz}$ with $a_{i,j}\in\mbq$ such that
\begin{itemize}
\item[(a)]$a_{i,j}=a_{i+n,j+n}$ for $i,j\in\mbz$; \item[(b)] for
every $i\in\mbz$, both sets $\{j\in\mbz\mid a_{i,j}\not=0\}$ and
$\{j\in\mbz\mid a_{j,i}\not=0\}$ are finite.
\end{itemize}
Let $\afMnz$ be the subset of $\afMnq$ consisting of matrices with integer entries.
Let
\begin{equation}\label{aftiThn,afThn}
\begin{split}
\aftiThn&:=\{A\in M_{\vtg,n}(\mbz)\mid a_{i,j}\geq 0,\,\forall i\not=j\},\
\afThn:=\{A\in\afMnz\mid a_{i,j}\in\mbn,\,\forall i,j
\}\\
\end{split}
\end{equation}


Let $\afmbzn=\{(\la_i)_{i\in\mbz}\mid
\la_i\in\mbz,\,\la_i=\la_{i-n}\ \text{for}\ i\in\mbz\}\text{ \,\,
and \,\,} \afmbnn=\{(\la_i)_{i\in\mbz}\in \afmbzn\mid \la_i\ge0\text{ for  }i\in\mbz\}.$
We will identify $\afmbzn$ with $\mbz^n$ via the following bijection
\begin{equation}\label{flat}
\flat:\afmbzn\lra\mbz^n,\quad \bfj\longmapsto \flat(\bfj)=(j_1,\cdots,j_n).
\end{equation}
There is a natural order relation $\leq$ on $\afmbzn$ defined by
\begin{equation}\label{order on afmbzn}
\la\leq\mu  \iff\la_i\leq \mu_i\text{ for all $1\leq i\leq n$}.
\end{equation}
We say that $\la<\mu$ if $\la\leq\mu$ and $\la\not=\mu$.


Let $\sZ=\mbz[\up,\up^{-1}]$, where $\up$ is an indeterminate, and let $\mbq(\up)$ be the fraction field
of $\sZ$.   Specializing $\up$ to $1$, $\mbq$ and
$\mbz$ will be viewed as $\sZ$-modules.
\end{Not}

\section{Ringel--Hall algebras and extended Ringel--Hall algebras}

Let $\tri$ ($n\geq 2$) be
the cyclic quiver\index{cyclic quiver}
\begin{center}
\begin{pspicture}(-3,-.6)(3.6,1.6)
\psset{linewidth=0.5pt, arrowsize=3.5pt}
\psdot*(-3,0) \psdot*(-1.7,0) \psdot*(-.4,0) \psdot*(2.3,0)
\psdot*(3.6,0)\psdot*(.3,1.2) \uput[u](.3,1.2){$_{n}$}
\uput[d](-3,0){$_1$} \uput[d](-1.7,0){$_2$} \uput[d](-.4,0){$_3$}
\uput[d](2.3,0){$_{n-2}$} \uput[d](3.6,0){$_{n-1}$}
\psline{->}(-3,0)(-1.7,0) \psline{->}(-1.7,0)(-.4,0)
\psline(-.4,0)(0,0)
\psline[linestyle=dotted,linewidth=1pt](0,0)(1.4,0)\psline{->}(1.4,0)(2.3,0)
\psline{->}(2.3,0)(3.6,0) \psline{->}(3.6,0)(.3,1.2)
\psline{->}(.3,1.2)(-3,0)
\end{pspicture}
\end{center}
with vertex set $I=\mbz/n\mbz=\{1,2,\ldots,n\}$ and arrow set
$\{i\to i+1\mid i\in I\}$. Let $\field$ be a field.
A representation $V=(V_i,f_i)_{i\in I}$ of $\tri$
over $\field$ is called nilpotent if $f_n\cdots f_2f_1:V_1\ra V_1$ is nilpotent.
We will denote by $\rep^0\!\!\tri=\rep_\field^0\tri$ the category of finite-dimensional nilpotent representations of $\tri$ over $\field$.


For $i\in I$, let $S_i$
denote the one-dimensional representation in $\rep^0\!\!\tri$ with $(S_i)_i=\field$ and $(S_i)_j=0$ for $i\neq j$.
Let
\begin{equation*}
\begin{split}
\afThnp&=\{A\in\afThn\mid a_{i,j}=0\text{ for }i\geq j\}.
\end{split}
\end{equation*}
For any $A=(a_{i,j})\in\afThnp$, let
$$M(A)=M_\field(A)=\bop_{1\leq i\leq n,i<j}a_{i,j}M^{i,j},$$
where
$M^{i,j}$ is the unique indecomposable representation  of length $j-i$ with top $S_i$. For $A\in\afThnp$, let $\bfd(A)\in\mbn I=\mbn^n$ be the dimension vector of $M(A)$. We will sometimes identify $\mbn I$ with $\afmbnn$ under \eqref{flat}.


Given modules $M,N_1,\cdots,N_m$ in $\rep^0\!\!\tri$, let $F_{N_1\cdots
N_m}^M$ be the number of the filtrations
$$0=M_m\han M_{m-1}\han\cdots M_1\han M_0=M$$
such that $M_{t-1}/M_t\cong N_t$ for all $1\leq t\leq m$. By
\cite{Ri93} and \cite{Guo95}, for $A,B_1,\cdots,B_m\in\afThnp$,
there is a polynomial $\vi^{A}_{B_1\cdots B_m}\in\mbz[\up^2]$ in $\up^2$ such
that, for any finite field $\field$ of $q$ elements,
$\vi^{A}_{B_1\cdots B_m}|_{\up^2=q}=F_{M_{\field}(B_1)\cdots
M_{\field}(B_m)}^{M_{\field}(A)}.$
Moreover, for each $A\in \afThnp$, there is a polynomial
$\fka_A=\fka_A(\up^2)\in\mbz[\up^2]$ in $\up^2$ such that, for each finite
field $\field$ with $q$ elements,
$\fka_A|_{\up^2=q}=|\Aut(M_{\field}(A))|$.

For $\bfa=(a_i)\in\afmbzn$ and $\bfb=(b_i)\in\afmbzn$, the  Euler form  associated with the cyclic quiver $\tri$ is the bilinear form $\lan-,-\ran:\afmbzn\times\afmbzn\ra\mbz$ defined by
\begin{equation*}
\lan \bfa,\bfb\ran=\sum_{i\in I}a_ib_i-\sum_{i\in
I}a_ib_{i+1}.
\end{equation*}

Let $\Hall$ be the {\it (generic) Ringel--Hall algebra}
of the cyclic quiver
$\tri$, which is by definition the free module over
$\sZ=\mbz[\up,\up^{-1}]$ with basis $\{u_A=u_{[M(A)]}\mid A\in\afThnp\}$. The
multiplication is given by
$$u_{A}u_{B}=\up^{\lan \bfd(A),\bfd(B)\ran}\sum_{C\in\afThnp}\vi^{C}_{A,B}(\up^2)u_{C}$$
for $A,B\in \afThnp$.  For $A\in\afThnp$, let
\begin{equation*}\label{tilde u_A}
\ti u_A=v^{\dim \End(M(A))-\dim M(A)}u_A.
\end{equation*}


Now let us recall the triangular relation given in \cite[(9.2)]{DDX} for the Ringel--Hall algebra $\Hall$.
For $M,N\in\rep^0\!\!\tri$, there exists a unique extension $G$ (up to isomorphism)
of $M$ by $N$ with minimal $\dim\End(G)$, which will be denoted by $M*N$ in the sequel  (see \cite{DD05,Re01}).
Let $\sM$ be the set of isoclasses of nilpotent representations of
$\tri$ and define a multiplication $*$  on $\sM$ by $[M]*[N]=[M*N]$
for any $[M],[N]\in\sM$. Then by \cite[\S3]{DD05} $\sM$ is a monoid
with identity $1=[0]$.

For $\la\in\afmbnn$ let $$S_\la=\op_{i=1}^n\la_iS_i$$ be the semisimple representation in $\rep^0\!\!\tri$. A semisimple representation
$S_{\la}$ is called {\it sincere} if $\la$ is sincere, namely, all $\la_i$ are positive. For $1\leq i\leq n$ let  $\afbse_i\in\afmbnn$ be the element
satisfying $$\flat(\afbse_i)=\bse_i=(0,\cdots,0,\underset
{(i)}1,0,\cdots,0),$$ where $\flat$ is defined in \eqref{flat}.
Let
$$\widetilde I=\{\afbse_1,\afbse_2,\cdots,\afbse_n\}\cup\{\text{all sincere vectors in $\afmbnn$}\}.$$
Let $\ti\Sg$ be the set of words on the set $\ti I$. For
$w=\bsa_1\bsa_2\cdots \bsa_m\in\ti\Sg$, let $\wp^+(w)\in\afThnp$ be the element
defined by
$$[S_{\bsa_1}]*\cdots*[S_{\bsa_m}]=[M(\wp^+(w))].$$
Thus we obtain a map $\wp^+:\ti\Sg\ra\afThnp$.
Let $$\afThnm=\{A\in\afThn\mid a_{i,j}=0\text{ for }i\leq j\}.$$
For
$w=\bsa_1\bsa_2\cdots\bsa_m\in\ti\Sg$ we let
$$\tw=\bsa_m\cdots\bsa_2\bsa_1.$$
Let $\wp^-$ be the map from $\ti\Sg$
to $\afThnm$ defined by $\wp^-(w)=\! ^t(\wp^+(\tw))$,
where ${}^t(\wp^+(\tw))$ is the transpose of $\wp^+(\tw)$.  By
\cite[3.3]{DDX} the maps $\wp^+$ and $\wp^-$ are all surjective.

Following \cite[3.5]{BLM} and \cite{DF09} we may define the order relation $\pr$
on $\afMnz$ as follows.
For $A\in\afMnz$ and $i\not=j\in\mbz$, let
$$\sg_{i,j}(A)=\sum\limits_{s\leq i,t\geq j}a_{s,t}\text{ if $i<j$,}\text{ and }
\sg_{i,j}(A)=\sum\limits_{s\geq i,t\leq j}a_{s,t}  \text{ if
$i>j$}.$$ For $A,B\in\afMnz$, define
$B\pr A$ if $\sg_{i,j}(B)\leq\sg_{i,j}(A)$ for all $i\not=j$.
Put $B\p A$ if $B\pr A$ and, for some pair $(i,j)$ with $i\not=j$,
$\sg_{i,j}(B)<\sg_{i,j}(A)$.

For $\la\in\afmbnn$ let $u_\la=u_{[S_\la]}$ and let
$$\ti u_\la=\up^{\dim\End(S_\la)-\dim S_\la} u_{\la}.$$
Any word $w=\bsa_1\bsa_2\cdots\bsa_m$ in $\ti\Sg$ can be uniquely expressed in the {\it tight form} $w=\bsb_1^{x_1}\bsb_2^{x_2}\cdots\bsb_t^{x_t}$ where $x_i=1$ if $\bsb_i$ is sincere, and $x_i$ is the number of consecutive occurrences of $\bsb_i$ if $\bsb_i\in\{\afbse_1,\afbse_2,\cdots,\afbse_n\}$.
For $w=\bsa_1\bsa_2\cdots\bsa_m\in\ti\Sg$ with the tight form
$\bsb_1^{x_1}\bsb_2^{x_2}\cdots\bsb_t^{x_t}$  let
\begin{equation*}
\ti u_{(w)} =\ti u_{x_1\bsb_1}\ti u_{x_2\bsb_2}\cdots\ti u_{x_t\bsb_t}\in\Hall.
\end{equation*}
By \cite[(9.2)]{DDX} and \cite[6.2]{DF09}, we have the following triangular relation in $\Hall$.
\begin{Prop}\label{tri Hall}
For $A\in\afThnp$, there exist $w_A\in\ti\Sg$ such that $\wp^+(w_A)=A$ and
\begin{equation*}
\ti u_{(w_A)}=\sum_{B\in\afThnp\atop B\pr A,\,\bfd(A)=\bfd(B)}f_{{B,A}}\ti u_B
\end{equation*}
where $f_{{B,A}}\in\sZ$ and $f_{{A,A}}=1$. In particular, $\Hall$ is generated by $u_\la$ for $\la\in\afmbnn$.
\end{Prop}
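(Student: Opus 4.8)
The plan is to establish the triangular relation by a double induction and then deduce the generation statement as an immediate corollary. The key invariant that controls everything is the pair consisting of the dimension vector $\bfd(A)$ (which is fixed throughout, since all the representations appearing have the same composition factors as $M(A)$) and the partial order $\pr$ restricted to the finite set $\{B\in\afThnp\mid \bfd(B)=\bfd(A)\}$. First I would fix $\la=\bfd(A)\in\afmbnn$; the semisimple module $S_\la$ is the unique $\pr$-minimal element with this dimension vector, and for it the word $w=\afbse_1^{\la_1}\cdots\afbse_n^{\la_n}$ (written in tight form) satisfies $\wp^+(w)=\wp^+(w)$ with $M(\wp^+(w))=S_\la$, so the assertion holds trivially with $f_{S_\la,S_\la}=1$. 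For the inductive step, given $A\in\afThnp$ with $\bfd(A)=\la$, I would choose a filtration of $M(A)$ by simples realizing $M(A)$ as an iterated extension; reading off the sequence of simple tops produces a word $w_A=\bsa_1\cdots\bsa_m\in\ti\Sg$ with $[S_{\bsa_1}]*\cdots*[S_{\bsa_m}]=[M(\wp^+(w_A))]$, and one must check $\wp^+(w_A)=A$ — this is precisely the surjectivity of $\wp^+$ together with a careful bookkeeping of which $*$-products land on $A$ itself; here I would invoke \cite[3.3]{DDX} and \cite[3.5]{BLM}, \cite{DF09}.

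The heart of the argument is the expansion of $\ti u_{(w_A)}$ in the PBW-type basis $\{\ti u_B\}$. Writing $\ti u_{(w_A)}=\ti u_{x_1\bsb_1}\cdots\ti u_{x_t\bsb_t}$ and multiplying out using the Hall-algebra structure constants $\vi^C_{A,B}(\up^2)$, every $C$ that occurs is an extension of the successive layers, hence has the same dimension vector $\la$; and by the defining property of the $*$-product — $M*N$ is the extension of minimal $\dim\End$ — the $\pr$-largest term that can appear is exactly $M(\wp^+(w_A))=M(A)$, with all other $C$ satisfying $C\p A$. The coefficient of $\ti u_A$ is a power of $\up$ coming from the Euler-form twists in the definition of $\ti u_A$ versus $u_A$; a short computation with $\dim\End$ and $\dim$ of the layers, combined with the normalization $\ti u_A=\up^{\dim\End(M(A))-\dim M(A)}u_A$, shows this power is $0$, i.e. $f_{A,A}=1$. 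This monotonicity-of-$*$-under-$\pr$ claim is the step I expect to be the main obstacle: one needs that forming an extension can only decrease $\sg_{i,j}$ or keep it fixed except for the "saturated" configuration, which is the content of \cite[(9.2)]{DDX}; I would quote that triangular relation directly rather than reprove it, and the work here is just verifying that the tight-form bracketing $\ti u_{(w_A)}$ matches the left-hand side of \cite[(9.2)]{DDX}, together with the degree bookkeeping for $f_{A,A}=1$.

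Finally, the "in particular" clause is immediate: the triangular relation expresses each basis element $\ti u_A$ (equivalently $u_A$, up to an invertible scalar in $\sZ$) as a $\sZ$-linear combination of $\ti u_B$ with $B\p A$ plus $\ti u_{(w_A)}$, and $\ti u_{(w_A)}$ is by construction a product of the elements $\ti u_{x\bsb}$ with $\bsb\in\ti I$; since each such $\ti u_{x\bsb}$ is (up to an invertible scalar) a monomial-type element built from the divided powers of the $u_{\afbse_i}$ and the $u_\la$ for sincere $\la$, and all of these lie in the subalgebra generated by $\{u_\mu\mid \mu\in\afmbnn\}$, downward induction on $\pr$ (using that $\{B\mid\bfd(B)=\bfd(A)\}$ is finite) shows every $u_A$ lies in that subalgebra. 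Hence $\Hall$ is generated by the $u_\la$, $\la\in\afmbnn$. I would present the induction on $\pr$ explicitly but leave the Hall-number manipulations to the cited references.
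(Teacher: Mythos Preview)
Your proposal is essentially correct and aligns with the paper's approach: the paper does not give an independent proof at all, but simply cites \cite[(9.2)]{DDX} and \cite[6.2]{DF09} for the triangular relation, exactly as you propose to do at the key step. Your additional narrative (the $*$-product as the $\pr$-maximal extension, the normalization computation for $f_{A,A}=1$, and the downward induction for the generation statement) is sound motivation for why those cited results yield the proposition, though the paper itself omits all of it.

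One minor imprecision worth flagging: your description of constructing $w_A$ by ``choosing a filtration of $M(A)$ by simples and reading off the sequence of simple tops'' is slightly off. The alphabet $\ti I$ consists of the $\afbse_i$ \emph{and} all sincere vectors, so the letters of $w_A$ index semisimple (not necessarily simple) layers; moreover, a word built only from simples need not satisfy $\wp^+(w_A)=A$ without further argument, since the $*$-product of a sequence of simples is not obviously $M(A)$. You correctly note that surjectivity of $\wp^+$ (\cite[3.3]{DDX}) provides some $w_A$ with $\wp^+(w_A)=A$, and that is all that is needed---but the explicit construction you sketch is not the one actually used, and you should simply invoke surjectivity rather than the filtration description.
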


Let $\bfHall=\Hall\ot\mbq(\up)$. The algebra $\bfHall$ does not have a Hopf algebra structure. However, if we add the torus algebra to $\bfHall$, we may get a Hopf algebra $\Hallpi$, called the extended Ringel--Hall algebra.
Let $\Hallpi$ be a $\mbq(\up)$-space with basis $\{u_A^+K_{\al}\mid \al\in\mbz I,A\in
\afThnp\}$. Let $\afThnp_1:=\afThnp\backslash\{0\}$.

\begin{Prop}\label{Green Xiao}
The $\mbq(\up)$-space $\Hallpi$ with basis $\{u_A^+K_{\al}\mid
\al\in\mbz I,A\in \afThnp\}$ becomes a Hopf algebra with the
following algebra, coalgebra and antipode structures.
\begin{itemize}
\item[(a)] Multiplication and unit:
\begin{equation*}
\begin{split}
u_A^+u_B^+&=\sum_{C\in\afThnp}\nup^{\lan \bfd(A),\bfd(B)\ran}\vi_{A,B}^C u_C^+, \text{\quad for all $A,B\in\afThnp$},\\
K_\al u_A^+&=\nup^{\lr{\bfd(A),\al}}u_A^+K_\al, \text{\quad for all $\al\in\mbz I$, $A\in\afThnp$},\\
K_{\al} K_{\bt}&=K_{\al+\bt}, \text{\quad for all $\al,\bt\in\mbz
I$}.
\end{split}
\end{equation*}
with unit $1=u_0^+=K_0$.

\item[(b)] Comultiplication and counit {\rm(Green \cite{Gr95})}:
\begin{equation*}
\begin{split}
\Dt(u_C^+)&=\sum_{A,B\in\afThnp}\nup^{\lan
\bfd(A),\bfd(B)\ran}\frac{\fka_A\fka_B}{\fka_C}
\vi_{A,B}^C u_B^+\ot u_A^+\ti K_{\bfd(B)},\\
\Dt(K_\al)&=K_\al\ot K_\al,\;\; \text{where $C\in\afThnp$ and
$\al\in\mbz I$},
\end{split}
\end{equation*}
with counit $\ep$ satisfying $\ep(u_C^+)=0$ for all
$C\in\afThnp_1$ and $\ep(K_\al)=1$ for all
$\al\in\mbz I$. Here, for each $\al=(a_i)\in\mbz I$,
$\ti K_\al$ denotes $(\ti K_1)^{a_1}\cdots (\ti K_n)^{a_n}$ with
$\ti K_i=K_iK_{i+1}^{-1}$.

\item[(c)] Antipode {\rm(Xiao \cite{X97})}:
\begin{equation*}
\begin{split}
&\sg(u_C^+)=\dt_{C,0}+\sum_{m\geq 1}(-1)^m\sum_{D\in\afThnp\atop
C_1,\ldots,C_m\in\afThnp_1}\frac{\fka_{C_1}\cdots
\fka_{C_m}}{\fka_C}\vi_{C_1,\ldots,C_m}^C \vi_{C_m,\ldots,C_1}^D
u_D^+\ti K_{-\bfd(C)},
\end{split}
\end{equation*}
for all $C\in\afThnp$, and $\sg(K_{\al})=K_{-\al}$, for all $\al\in\mbz
I$.
\end{itemize}
\end{Prop}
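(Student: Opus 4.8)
The plan is to verify the three groups of axioms -- algebra, coalgebra/counit, and antipode -- in turn, in each case reducing to the standard structure theory of Hall algebras of finitary hereditary categories; this applies because $\rep^0\!\tri$, the category of finite-dimensional nilpotent representations of $\tri$, has finite $\Hom$ and $\Ext^1$ groups over finite fields and satisfies $\Ext^i(M,N)=0$ for $i\geq 2$ and $M,N$ nilpotent (Ringel). For the algebra structure: Ringel's twisted Hall multiplication on $\Hall$, namely $u_Au_B=\up^{\lan\bfd(A),\bfd(B)\ran}\sum_C\vi^C_{A,B}u_C$, is associative because the filtration-counting numbers $F^M_{N_1N_2N_3}$ are associative (both bracketings count the same flags) and the $\up$-twist is multiplicative along a flag, using bilinearity of $\lan-,-\ran$ together with $\bfd(C)=\bfd(A)+\bfd(B)$ whenever $\vi^C_{A,B}\neq 0$. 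Base change to $\mbq(\up)$ gives $\bfHall$, and $\Hallpi$ is its smash-product-type extension by the group algebra $\mbq(\up)[\mbz^n]$ of the torus $\{K_\al\}$; the cross relation $K_\al u_A^+=\up^{\lan\bfd(A),\al\ran}u_A^+K_\al$ is compatible with both the Hall relations and the group relations (again by bilinearity of $\lan-,-\ran$), so multiplication on $\Hallpi$ is well defined and associative with unit $u_0^+=K_0$. This step is routine.

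For the coalgebra structure, define $\Dt$ on the $u_C^+$ by the stated coproduct formula, on the torus by $\Dt(K_\al)=K_\al\ot K_\al$, and extend multiplicatively; set $\ep(u_C^+)=\dt_{C,0}$ and $\ep(K_\al)=1$. Coassociativity on $\bfHall$ is an identity among Hall numbers proved by counting, in two ways, compatible pairs consisting of a $2$-step and a $3$-step filtration, with the $\fka$- and $\up$-normalizations matching on both sides; on the torus it is immediate. The essential point -- that $\Dt:\Hallpi\to\Hallpi\ot\Hallpi$ is an algebra homomorphism -- is Green's theorem \cite{Gr95}, which is equivalent to Green's weighted identity for Hall numbers in a finitary hereditary category; the normalization by $\fka_A\fka_B/\fka_C$ and the torus twist $\ti K_{\bfd(B)}$ appearing in $\Dt(u_C^+)$ are exactly what absorb the discrepancy between the two sides of that identity. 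The counit $\ep$ is visibly an algebra map, and the counit axioms follow by isolating the $A=0$ and $B=0$ summands in Green's coproduct formula. Hence $\Hallpi$ is a bialgebra.

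Finally, $\Hallpi$ is $\mbn I$-graded by $\bfd$ with the torus sitting in degree $0$, each homogeneous component is finite-dimensional, and the degree-zero part $\mbq(\up)[\mbz^n]$ is already a Hopf algebra with $\sg(K_\al)=K_{-\al}$; therefore there is a unique antipode, obtained by solving the convolution equation $\sg*\id=\eta\ep=\id*\sg$ recursively on the grading. Iterating and resumming the recursion $\sg(u_C^+)=-\sum_{0\neq C'}(\cdots)\sg(u_{C''}^+)(\cdots)$ produces exactly Xiao's alternating-sum formula \cite{X97}, and both antipode axioms then hold; alternatively, once the bialgebra claim and the hypotheses on $\rep^0\!\tri$ are in place, the full Hopf structure together with the explicit formulas in (b) and (c) can be quoted directly from \cite{Gr95,X97} (cf. \cite{DDF}). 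The main obstacle is Green's multiplicativity identity underlying (b) -- the weighted Hall-number identity forcing $\Dt$ to respect products, together with the bookkeeping of the $\fka$-normalizations and the $\ti K$-twists; the antipode (c) is then a formal consequence of the graded-bialgebra structure, and the algebra axioms in (a) are elementary.
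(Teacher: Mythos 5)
The paper gives no proof at all: the attributions inside the statement itself (``Green \cite{Gr95}'' for the comultiplication, ``Xiao \cite{X97}'' for the antipode) signal that this proposition is quoted from the literature, and the text moves straight on to the integral form $\Hallp$. Your sketch is a correct reconstruction of those cited proofs: Ringel's associativity of the twisted multiplication via associativity of flag counts and bilinearity of the Euler form, the torus smash product with the cross relation $K_\al u_A^+=\up^{\lan\bfd(A),\al\ran}u_A^+K_\al$, Green's compatibility theorem as the crux making $\Dt$ an algebra homomorphism with the $\fka_A\fka_B/\fka_C$ and $\ti K_{\bfd(B)}$ normalizations, and the antipode obtained by solving the convolution equation degree-by-degree in the $\mbn I$-grading (the torus is degree~$0$ and already Hopf), which resums to Xiao's alternating formula. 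One small caution worth making explicit if you expand this: the hereditary property being invoked is that $\Ext^{\geq 2}$ vanishes \emph{within} $\rep^0\!\tri$, which holds because nilpotent representations form an extension-closed full subcategory of the module category of the (hereditary) path algebra, so the Ext-groups agree. Since you also observe that the statement can simply be cited from \cite{Gr95,X97} (cf.\ \cite{DDX,DDF}), your route and the paper's are essentially the same, with yours adding the verification details.
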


We conclude this section by introducing the integral form $\Hallp$ for $\Hallpi$.
For  $c,t\in\mathbb Z$ with $t\geq 1$, let
\begin{equation*}
\leb{K_i;c\atop t}\rib=\prod_{s=1}^{t}\frac{K_iv^{c-s+1}-K_i^{-1}
v^{-c+s-1}}{v^s-v^{-s}}\,\,\text{ and }\,\,\leb{K_i;c\atop 0}\rib=1.
\end{equation*}
Let $\Hallp$ be the $\sZ$-submodule of $\Hallpi$ spanned by all  $u_A^+\prod_{1\leq i\leq n}\leb{K_i;0\atop \la_i}\rib K_i^{\dt_i}$, where $A\in\afThnp$, $\dt_i\in\{0,1\}$ and $\la\in\afmbnn$.

\begin{Lem}\label{Hallp}
$\Hallp$ is a $\sZ$-Hopf subalgebra of $\Hallpi$.
\end{Lem}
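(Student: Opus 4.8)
The plan is to verify directly the closure properties that make $\Hallp$ a $\sZ$-Hopf subalgebra of $\Hallpi$: closure under multiplication (with $1\in\Hallp$), under the comultiplication $\Dt$ and counit $\ep$ of \ref{Green Xiao}, and under the antipode $\sg$. Throughout I split $\Hallp$ into its Hall part, the $\sZ$-span of $\{u_A^+\mid A\in\afThnp\}$ (this is just the integral algebra $\Hall$ of \S2, and it is a $\sZ$-subalgebra since its structure constants $\up^{\lan\bfd(A),\bfd(B)\ran}\vi^C_{A,B}$ lie in $\sZ$), and its torus part, which I denote $T$: the $\sZ$-span of $\{\prod_{1\le i\le n}\gp{K_i;0}{\la_i}K_i^{\dt_i}\mid\la_i\ge0,\ \dt_i\in\{0,1\}\}$. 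Since $T$ involves only the commuting group-likes $K_i$ and sees none of the affine structure, it is, as a $\sZ$-algebra with this basis, just Lusztig's integral form of the Cartan part of quantum $\frak{gl}_n$, and I shall use freely the classical rank-one facts about it (as in the finite-type case, cf. \cite{DDF}): $T$ is a $\sZ$-subalgebra of $\mbq(\up)[K_1^{\pm1},\dots,K_n^{\pm1}]$ containing $K_i^{\pm m}$ and all $\gp{K_i;c}{t}$ for $c\in\mbz$, $t\ge0$ (the case $t=1$ reading $\gp{K_i;c}{1}=\tfrac{\up^c-\up^{-c}}{\up-\up^{-1}}K_i+\up^{-c}\gp{K_i;0}{1}$, the rest standard); $\Dt$ maps $T$ into $T\ot T$ via the quantum Vandermonde expansion of $\Dt\gp{K_i;0}{t}$; $\sg$ maps $T$ into itself, since $\sg(K_i)=K_i^{-1}$ forces $\sg\gp{K_i;0}{t}=(-1)^t\gp{K_i;t-1}{t}$; and $\ep$ annihilates every defining monomial with some $\la_i\ge1$, the $s=1$ factor of $\ep\gp{K_i;0}{\la_i}$ being $(\up^0-\up^0)/(\up-\up^{-1})=0$.

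For closure under multiplication, the relation $K_\al u_A^+=\up^{\lan\bfd(A),\al\ran}u_A^+K_\al$ gives $u_A^+K_i^{\pm1}=\up^{\mp\lan\bfd(A),\afbse_i\ran}K_i^{\pm1}u_A^+$, so moving a torus monomial from the right of $u_A^+$ to its left is the substitution $K_i\mapsto\up^{-\lan\bfd(A),\afbse_i\ran}K_i$, which carries $\gp{K_i;0}{\la_i}$ to $\gp{K_i;c_i}{\la_i}$ for a shifted $c_i\in\mbz$. Hence
$$u_A^+\Big(\prod_i\gp{K_i;0}{\la_i}K_i^{\dt_i}\Big)=\Big(\prod_i\gp{K_i;c_i}{\la_i}K_i^{\dt_i}\Big)u_A^+\in T\cdot u_A^+$$
by the rank-one facts, so a product of two spanning elements of $\Hallp$ rewrites as an element of $T\cdot\Hall=\Hallp$, both halves being $\sZ$-subalgebras; and $1=u_0^+=K_0\in\Hallp$.

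For $\Dt$ and $\ep$: the counit is a $\mbq(\up)$-algebra map with $\ep(K_i)=1$ and $\ep\gp{K_i;0}{\la_i}=0$ for $\la_i\ge1$, so $\ep$ of a spanning element is $\dt_{A,0}$ when all $\la_i=0$ and $0$ otherwise; hence $\ep(\Hallp)\sse\mbz\sse\sZ$. Next, $\Hallp$ is $\sZ$-free with the given basis (the torus monomials being $\mbq(\up)$-linearly independent), so $\Hallp\ot\Hallp$ is a $\sZ$-subalgebra of $\Hallpi\ot\Hallpi$; since $\Dt$ is an algebra map and every basis element of $\Hallp$ is $u_A^+\cdot h$ with $h\in T$, it suffices to put $\Dt(u_A^+)$ and $\Dt(h)$ into $\Hallp\ot\Hallp$. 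The second was recorded above. For the first, Green's formula in \ref{Green Xiao}(b) expresses $\Dt(u_C^+)$ through the Laurent monomials $\ti K_{\bfd(B)}\in T$ and the scalars $\up^{\lan\bfd(A),\bfd(B)\ran}\tfrac{\fka_A\fka_B}{\fka_C}\vi^C_{A,B}$, which lie in $\sZ$: although $\fka_A\fka_B/\fka_C$ alone need not be a polynomial, its product with the Euler-form power and $\vi^C_{A,B}$ is, and this is precisely the integrality built into Green's theorem — for the cyclic quiver via the Hall-polynomial identities of \cite{DDX} (cf. \cite{DDF}). Thus $\Dt(u_C^+)\in\Hallp\ot\Hallp$.

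Finally, for $\sg$ (and the main obstacle): $\sg$ is an algebra anti-homomorphism and $\Hallp$ is by now known to be a $\sZ$-subalgebra, so it is enough to show $\sg(u_A^+)\in\Hallp$ and $\sg(h)\in T$ for each torus basis element $h$ — the latter being a rank-one fact from the first paragraph. For the former, Xiao's formula in \ref{Green Xiao}(c) writes $\sg(u_C^+)$ as a $\sZ$-combination of elements $u_D^+\ti K_{-\bfd(C)}$ with $\ti K_{-\bfd(C)}\in T$, provided the coefficients $(-1)^m\tfrac{\fka_{C_1}\cdots\fka_{C_m}}{\fka_C}\vi^C_{C_1,\dots,C_m}\vi^D_{C_m,\dots,C_1}$ lie in $\sZ$. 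This integrality of the Green and Xiao structure coefficients for $\tri$ — that every scalar occurring in $\Dt(u_C^+)$ and $\sg(u_C^+)$ lands in $\sZ$ after the Euler-form twist, despite the non-polynomial ratios of automorphism-group orders involved — is the only genuinely nontrivial ingredient; it is a standard refinement of the theorems of Green and Xiao in the finite-type case, and for the cyclic quiver it is implicit in \cite{DDX} and underlies the constructions of \cite{DDF}. Granting it, the closure properties verified above show that $\Hallp$ is a $\sZ$-Hopf subalgebra of $\Hallpi$.
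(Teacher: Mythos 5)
Your overall plan (verify closure of $\Hallp$ under multiplication, $\Dt$, $\ep$, $\sg$, treating the Hall part and torus part separately) is sound, and the multiplication, counit, and torus-part verifications are essentially the same as the paper's (the paper cites Takeuchi where you invoke rank-one facts, but the content is the same). The real divergence is in how you handle $\Dt$ and $\sg$ on the Hall part, and there you have a genuine gap.

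You apply Green's and Xiao's formulas from Proposition \ref{Green Xiao} directly to an arbitrary $u_C^+$ and assert that the resulting scalars $\up^{\lan\bfd(A),\bfd(B)\ran}\tfrac{\fka_A\fka_B}{\fka_C}\vi^C_{A,B}$ (and the analogous Xiao antipode coefficients) lie in $\sZ$. You yourself flag this as ``the only genuinely nontrivial ingredient,'' but then dispose of it by saying it is a ``standard refinement'' that is ``implicit in \cite{DDX}'' and that you are ``granting it.'' That is not a proof. Green's and Xiao's theorems give a Hopf structure over $\mbq(\up)$; they do not, on their face, deliver $\sZ$-integrality of the comultiplication and antipode structure constants, and neither \cite{DDX} nor \cite{DDF} states the general integrality result you would need. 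This is precisely the point the paper's proof is engineered to avoid: using Proposition \ref{tri Hall}, the Hall part of $\Hallp$ is generated by the semisimple classes $u_\la^+$ for $\la\in\afmbnn$, and for semisimples the Hall polynomials reduce to $q$-multinomial coefficients and the automorphism orders to the explicit product $\prod_{i,s}(\up^{2\la_i}-\up^{2s})$, so that $\Dt(\ti u_\la^+)$ and $\sg(u_\la^+)$ can be written out in closed form (\eqref{eq 3-Hallp}, \eqref{eq 4-Hallp}) with coefficients that are visibly in $\sZ$. Since $\Dt$ is an algebra map and $\sg$ an anti-map, checking integrality on generators suffices. That generator reduction is the idea your proof is missing; without it (or an actual proof of the general-coefficient integrality), the argument is incomplete.
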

\begin{proof}
Clearly, for $A\in\afThnp$, $1\leq i\leq n$ and $t\in\mbn$,
\begin{equation}\label{eq 1-Hallp}
\leb{K_i;0\atop t}\rib u_A^+=u_A^+\leb{K_i;\lr{\bfd(A),\afbse_i}\atop t}\rib.
\end{equation}
This together with \cite[2.14]{Lu90} implies that
$\Hallp$ is a $\sZ$-subalgebra of $\Hallpi$.

For $m\in\mbn$, let $[\![m]\!]^!=[\![1]\!][\![2]\!]\cdots[\![m]\!]$ where $[\![i]\!]=\frac{\up^i-\up^{-i}}{\up-\up^{-1}}$. Clearly, for  $\la,\la^{(1)},\cdots,\la^{(m)}\in\afmbnn$ with $\la=\sum_{1\leq i\leq m}\la^{(i)}$,
$$
\vi^{\la}_{\la^{(1)}\cdots\la^{(m)}}
=\prod_{1\leq i\leq m}\dleb{\la_i\atop\la^{(1)}_i\cdots\la^{(m)}_i}
\drib,\qquad \fka_\la=\prod_{1\leq i\leq n\atop 0\leq s\leq \la_i-1}(\up^{2\la_i}-\up^{2s}),
$$
where $\dleb{\la_i\atop\la^{(1)}_i,\ldots,
\la_i^{(m)}}\drib=\frac{[\![\la_i]\!]^!}{[\![\la^{(1)}_i]\!]^!\ldots
[\![\la_i^{(m)}]\!]^!}$.
We conclude that
\begin{equation}\label{eq 3-Hallp}
\Dt(\ti u_\la^+)=
\sum_{\la=\la^{(1)}+\la^{(2)}\atop\la^{(i)}\in\afmbnn,\,\forall i}\up^{\lr{\la^{(2)},\la^{(1)}}}\ti u_{\la^{(1)}}^+\ot\ti u_{\la^{(2)}}^+\ti K_{\la^{(1)}}\in\Hallp\ot\Hallp
\end{equation}
and
\begin{equation}\label{eq 4-Hallp}
\begin{split}
\sg(u_\la^+)&=\dt_{\la,0}+\sum_{m\geq 1}(-1)^m
\sum_{D\in\afThnp\atop\la^{(i)}\in\afmbnn,\,\la^{(i)}\not=0,\,
1\leq i\leq m}\up^{-2\sum_{1\leq i\leq n}\sum_{1\leq k<l\leq m}\la^{(k)}_i\la_i^{(l)}}\vi^{D}_{\la^{(m)}\cdots\la^{(1)}}
u_{D}^+\ti K_{-\la}\\
&\in\Hallp.
\end{split}
\end{equation}
Furthermore, by \cite[(2.1),(2.2)]{T}, we have
\begin{equation}\label{eq 2-Hallp}
\Dt\left(\leb{K_i;0\atop t}\rib\right)
=\sum_{0\leq j\leq t}K_i^{-t+j}\leb{K_i;0\atop j}\rib
\ot K_i^{j}\leb{K_i;0\atop t-j}\rib\in\Hallp\ot\Hallp
\end{equation}
and
\begin{equation}
\sg\left(\leb{K_i;0\atop t}\rib\right)=(-1)^t\leb{K_i;-1+t\atop t}\rib\in\Hallp.
\end{equation}
Consequently, $\Hallp$ is a Hopf algebra over $\sZ$, since $\Hallp$ is generated by $u_\la^+$, $K_i^{\pm 1}$ and $\leb{K_i;0\atop t}\rib$ for all $\la\in\afmbnn$, $1\leq i\leq n$ and $t\in\mbn$ by \ref{tri Hall}.
\end{proof}


\section{The integral form $\afuglz$ of $\afuglq$}
Let ${\frak{gl}_n}(\mbq)$ be the general linear Lie algebra over $\mbq$. Let  $$\afgl:={\frak{gl}_n}(\mbq)\ot\mbq[t,t^{-1}]$$ be the loop algebra associated to $\frak{gl}_n(\mbq)$.
For $i,j\in\mbz$, let $\afE_{i,j}\in\afMnq$ be the matrix
$(e^{i,j}_{k,l})_{k,l\in\mbz}$ defined by
\begin{equation*}e_{k,l}^{i,j}=
\begin{cases}1&\text{if $k=i+sn,l=j+sn$ for some $s\in\mbz$;}\\
0&\text{otherwise},\end{cases}
\end{equation*}
Recall the set $\afMnq$ defined in \ref{Notaion}.
Clearly, the map
$$\afMnq\lra\afgl ,\,\,\,\afE_{i,j+ln}\longmapsto E_{i,j}\ot t^l, \,1\le i,j\le n,l\in\mbz $$
is a Lie algebra isomorphism. We will identify the loop algebra
$\afgl$ with $\afMnq$ in the sequel.

Let $\afuglq$ be the universal enveloping algebra of the loop
algebra $\afgl$. Let $\afuglqp$ (resp., $\afuglqm$) be the subalgebra of $\afuglq$ generated
by $\afE_{i,j}$ (resp., $\afE_{j,i}$) for all $i<j$.
Let $\afuglqz$ be the subalgebra of $\afuglq$ generated
by $\afE_{i,i}$ for $1\leq i\leq n$.

We may interpret the $\pm$-part of $\afuglq$ as the specialization of Hall algebras.
Let $\Hall_\mbq=\Hall\ot_\sZ\mbq$, where $\mbq$ is regarded as a $\sZ$-module by specializing $\up$ to $1$. We shall denote $u_{A}\otimes 1$ by $u_{A,1}$ for $A\in\afThnp$.
\begin{Lem}[{\cite[6.1.2]{DDF}}]\label{iotap}
$(1)$ The set
$$\bigg\{\prod_{1\leq i\leq n\atop i<j,\,j\in\mbz}u_{_{\afE_{i,j},1}}^{a_{i,j}}\,\bigg|\,
A=(a_{i,j})\in\afThnp\bigg\}$$
forms a $\mbq$-basis of $\Hall_\mbq$, where the products are taken with respect to any fixed total order on
$\{(i,j)\mid 1\leq i\leq n,\,i<j,\,j\in\mbz\}$.

$(2)$ There is a unique injective algebra homomorphism
$\iota^+:\Hall_\mbq\ra\afuglq$ (resp., $\iota^-:\Hall_\mbq^{\mathrm op}\ra\afuglq$) taking $u_{{\afE_{i,j},1}}\mapsto\afE_{i,j}$ (resp.,$u_{{\afE_{i,j},1}}\mapsto\afE_{j,i}$) for all $i<j$ such that $\iota^+(\Hall_\mbq)=\afuglqp$ and $\iota^-(\Hall_\mbq^{\mathrm op})=\afuglqm$.
\end{Lem}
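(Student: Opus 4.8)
The plan is to prove both parts by reducing to the well-understood loop-algebra case, using the triangular relation of Proposition \ref{tri Hall} as the organizing tool. For part $(1)$, I would first recall that $\Hall$ is free over $\sZ$ with basis $\{u_A\mid A\in\afThnp\}$, so after specializing $\up\mapsto 1$ the set $\{u_{A,1}\mid A\in\afThnp\}$ is automatically a $\mbq$-basis of $\Hall_\mbq$. The content of $(1)$ is therefore to show that the set of ordered products $\prod u_{\afE_{i,j},1}^{a_{i,j}}$ is also a $\mbq$-basis. Since both sets are indexed by $\afThnp$, it suffices to establish a unitriangular change of basis with respect to the order $\pr$ (restricted to fixed dimension vector $\bfd(A)$). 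First I would note that, in $\Hall$, the product $\prod_{i<j} u_{\afE_{i,j}}^{a_{i,j}}$ equals $u_{[S_{\bsa_1}]}^{x_1}\cdots$ up to a power of $\up$ and reindexing — more precisely, each $u_{\afE_{i,j}}$ is $u_{[M^{i,j}]}$ and the relevant filtered module is $M(A)$ itself — and then apply Proposition \ref{tri Hall}: after specialization, $\prod u_{\afE_{i,j},1}^{a_{i,j}} = u_{A,1} + \sum_{B\p A,\,\bfd(B)=\bfd(A)} c_{B,A}\,u_{B,1}$. Because $\pr$ restricted to each fiber $\bfd^{-1}(\bfd(A))$ is a partial order with finitely many elements below $A$, this triangular system is invertible over $\mbq$, giving the basis claim.

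For part $(2)$, the uniqueness of $\iota^+$ is immediate once we know $\Hall_\mbq$ is generated as an algebra by the $u_{\afE_{i,j},1}$ with $i<j$ (equivalently, by the $u_{\afbse_i,1}=u_{S_i,1}$ together with the sincere generators — but in fact the $u_{\afE_{i,i+1},1}$ already generate, by a standard Ringel-Hall argument, or one may simply use the generating set from Proposition \ref{tri Hall} and rewrite): any algebra map is determined by its values on generators. For existence, I would invoke the known presentation. The loop algebra $\afgl$ has $\afuglqp$ generated by the $\afE_{i,j}$, $i<j$, and the standard fact (this is where I would cite the presentation of $\afuglq$ used in \cite{DDF}) is that $\afuglqp$ is isomorphic to the specialization at $v=1$ of the positive part of quantum affine $\frak{gl}_n$, which in turn is the composition subalgebra of $\Hall$. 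Since by Proposition \ref{tri Hall} the full Ringel-Hall algebra $\Hall$ coincides with its composition subalgebra (it is generated by the $u_\la$, $\la\in\afmbnn$, and these lie in the subalgebra generated by the $u_{S_i}$), we get $\Hall_\mbq\cong\afuglqp$ via $u_{\afE_{i,j},1}\mapsto\afE_{i,j}$; injectivity follows because a basis maps to a linearly independent set — indeed part $(1)$ exhibits a PBW-type basis of $\Hall_\mbq$ whose image is the corresponding PBW basis of $\afuglqp$. The statement for $\iota^-$ is obtained by applying the transpose antiautomorphism $A\mapsto {}^t\!A$, which identifies $\Hall^{\mathrm{op}}$ with the algebra built from $\afThnm$ and intertwines with the subalgebra $\afuglqm$ generated by the $\afE_{j,i}$.

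The main obstacle, and the step I would be most careful about, is making the identification of $\Hall_\mbq$ with $\afuglqp$ precise without circularity: one must know independently that $\afuglqp$ has no "extra" relations beyond those forced in the Ringel-Hall algebra, i.e. that the obvious surjection $\afuglqp\twoheadrightarrow\Hall_\mbq$ (sending $\afE_{i,j}\mapsto u_{\afE_{i,j},1}$, which exists because the $u_{\afE_{i,j},1}$ satisfy the loop-algebra relations — this itself needs the Hall-algebra commutator computations, or can be imported from the degenerate limit of the Green/Xiao Hopf structure of Proposition \ref{Green Xiao}) is in fact an isomorphism. This is exactly where I would lean on \cite[6.1.2]{DDF} rather than reprove it: the dimension count is supplied by part $(1)$ on the Hall side and by the PBW theorem for $U(\afgl)$ on the other side, and the two bases match under the map, forcing bijectivity. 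Everything else — the unitriangularity in $(1)$, the generation statements, the transpose symmetry for $\iota^-$ — is routine given Proposition \ref{tri Hall}.
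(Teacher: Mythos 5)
The paper does not prove this lemma; it is imported verbatim from \cite[6.1.2]{DDF}, so there is no in-text argument to compare against. Judged on its own, your sketch has a genuine gap in part $(1)$.

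You want to show that the ordered monomials $\prod u_{\afE_{i,j},1}^{a_{i,j}}$ are related to the defining basis $\{u_{A,1}\}$ by a unitriangular change of basis, and you propose to extract this from Proposition \ref{tri Hall}. But Proposition \ref{tri Hall} concerns a different kind of product: the element $\ti u_{(w_A)}$ is, by the paper's own definition, a product of the form $\ti u_{x_1\bsb_1}\cdots \ti u_{x_t\bsb_t}$, where each factor $\ti u_{x_i\bsb_i}$ is $u_{[S_{x_i\bsb_i}]}$ for a \emph{semisimple} module $S_{x_i\bsb_i}$. In contrast, each $u_{\afE_{i,j}}$ with $j>i+1$ is $u_{[M^{i,j}]}$ where $M^{i,j}$ is an \emph{indecomposable} of length $j-i>1$. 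Your sentence "each $u_{\afE_{i,j}}$ is $u_{[M^{i,j}]}$ and the relevant filtered module is $M(A)$ itself" does not reconcile these: the monomial $\prod u_{\afE_{i,j}}^{a_{i,j}}$ is simply not of the form $\ti u_{(w)}$ for any word $w\in\ti\Sg$, so Proposition \ref{tri Hall} gives you no information about it. The unitriangularity you need is true, but it comes from a different source: one must argue via Hall numbers directly, e.g.\ that $\vi^C_{M^{i_1,j_1},\ldots,M^{i_m,j_m}}\neq 0$ forces $C$ to be a degeneration of the direct sum $\bigoplus a_{i,j}M^{i,j}=M(A)$, together with the compatibility of the degeneration order with $\pr$ (as in \cite[3.7.6]{DDF}), plus a check that the coefficient of $u_A$ is invertible at $\up=1$. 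Without such a step, the triangular system you invert has not been established.

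Part $(2)$ is in better shape: the scheme of defining a map $\afuglqp\to\Hall_\mbq$ (checking loop-algebra relations on the generators $u_{\afE_{i,j},1}$), observing surjectivity from generation, and deducing injectivity by matching the PBW basis of $\afuglqp$ with the basis from part $(1)$ is a sound route, and you correctly flag the relation check as the nontrivial step. But this argument depends on part $(1)$ being established, so the gap propagates. Also, a small imprecision: your appeal to "the composition subalgebra of $\Hall$" coinciding with $\Hall$ rests on Proposition \ref{tri Hall} (which says $\Hall$ is generated by the $u_\la$), which is a legitimate use of that proposition, unlike the use in part $(1)$.
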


Let $\afuglqpz=\afuglqp\afuglqz$ be the the Borel subalgebra $\afuglqpz$ of $\afuglq$. We now show that the algebra isomorphism $\iota^+:\Hall_\mbq\ra\afuglqp$ can be extended to a Hopf algebra isomorphism between the specialization $\ol{\Hallp_\mbq}$ of $\Hallp$ at $\up=1,\,K_i=1$ and $\afuglqpz$.

Let $\Hallp_{\mbq}=\Hallp\ot_\sZ\mbq$.
We shall denote $u_{A,1}^+=u_{A}^+\otimes 1$, $K_{i,1}=K_i\ot 1$ and $\leb{K_i;0\atop t}\rib_1=\leb{K_i;0\atop t}\rib\ot 1$ for $A\in\afThnp$, $1\leq i\leq n$ and $t\in\mbn$.
Let $$\ol{\Hallp_\mbq}=\Hallp_{\mbq}/\lr{K_{i,1}-1\mid 1\leq i\leq n}.$$
We will use the same notation for elements in
$\Hallp_{\mbq}$ and $\ol{\Hallp_\mbq}$. The algebra $\ol{\Hallp_\mbq}$ inherits a Hopf algebra structure from that of $\Hallp$.

\begin{Lem}\label{Hopf algebra isomorphism}
There is a  Hopf algebra isomorphism $$\iota^{\geq 0}:\ol{\Hallp_\mbq}\ra\afuglqpz$$
defined by sending $u_{{\afE_{i,j},1}}^+$ to $\afE_{i,j}$, and
$\leb{K_i;0\atop 1}\rib_1$ to $\afE_{i,i}$ for $i<j$.
\end{Lem}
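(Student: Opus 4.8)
The plan is to construct the map $\iota^{\geq 0}$ by combining the already-established isomorphism $\iota^+:\Hall_\mbq\to\afuglqp$ from \ref{iotap}(2) with an identification of the torus parts, and then to check it is a well-defined Hopf algebra map and a bijection. First I would recall that $\ol{\Hallp_\mbq}$ is generated, as an algebra, by the images of $u_{\la,1}^+$ ($\la\in\afmbnn$), by $K_{i,1}^{\pm 1}$ and by $\leb{K_i;0\atop t}\rib_1$, using \ref{tri Hall} and the generation statement in the proof of \ref{Hallp}; after killing $K_{i,1}-1$, the torus part collapses and one sees that $\ol{\Hallp_\mbq}$ is generated by the classes of $u_{_{\afE_{i,j},1}}^+$ for $i<j$ together with the classes $h_i:=\leb{K_i;0\atop 1}\rib_1$. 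Indeed, at $\up=1$ one computes $\leb{K_i;0\atop t}\rib_1 = \tfrac1{t!}h_i(h_i-1)\cdots(h_i-t+1)$, so the divided-power torus generators are redundant modulo $\langle K_{i,1}-1\rangle$ once the $h_i$ are available. On the other side, $\afuglqpz=\afuglqp\afuglqz$ is generated by the $\afE_{i,j}$ ($i<j$) and the $\afE_{i,i}$ ($1\le i\le n$).

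Next I would produce the homomorphism in the forward direction. The cleanest route is to invoke the PBW-type tensor decomposition $\ol{\Hallp_\mbq}\cong \Hall_\mbq\otimes \ol{\sV}$, where $\ol{\sV}$ is the (commutative) specialized torus part $\mbq[h_1,\dots,h_n]$ — this follows from the basis $\{u_A^+\prod_i\leb{K_i;0\atop\la_i}\rib K_i^{\dt_i}\}$ of $\Hallp$ after specialization and quotienting — and to match it with the triangular decomposition $\afuglqpz\cong\afuglqp\otimes\afuglqz\cong\afuglqp\otimes\mbq[\afE_{1,1},\dots,\afE_{n,n}]$ coming from the PBW theorem for $U(\afgl)$. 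Under this matching, $\iota^{\geq 0}$ is $\iota^+$ on the Hall factor and the obvious polynomial-ring isomorphism $h_i\mapsto\afE_{i,i}$ on the torus factor; that it is an algebra map then reduces to checking the cross relations $K_\al u_A^+ = \up^{\langle\bfd(A),\al\rangle}u_A^+K_\al$ specialize (via \eqref{eq 1-Hallp} with $\up=1$) to the correct bracket relations $[\afE_{i,i},\afE_{k,l}] = (\dt_{i,k}-\dt_{i,l})\afE_{k,l}$ in $U(\afgl)$ — a short direct calculation. Surjectivity is then immediate since the generators $\afE_{i,j}$ ($i<j$) and $\afE_{i,i}$ are all hit; injectivity follows because $\iota^+$ is injective by \ref{iotap}(2) and the torus map is visibly injective, and the two tensor decompositions are compatible.

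It remains to verify that $\iota^{\geq 0}$ respects the coalgebra and antipode structures. For this I would check compatibility on the algebra generators only, which suffices since $\Dt$ and $\sg$ are (anti)homomorphisms on both sides. On the Hall generators, \eqref{eq 3-Hallp} specialized at $\up=1$ gives $\Dt(u_{_{\afE_{i,j},1}}^+)=u_{_{\afE_{i,j},1}}^+\otimes 1 + 1\otimes u_{_{\afE_{i,j},1}}^+$ (the only two-term decompositions of $\afE_{i,j}$ into nonnegative pieces being the trivial ones, and $\ti K_{\la^{(1)}}\mapsto 1$ after the specialization), which is exactly the primitive comultiplication of $\afE_{i,j}$ in $U(\afgl)$; similarly \eqref{eq 4-Hallp} at $\up=1$ collapses to $\sg(u_{_{\afE_{i,j},1}}^+)=-u_{_{\afE_{i,j},1}}^+$, matching $\sg(\afE_{i,j})=-\afE_{i,j}$. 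For the torus generators, \eqref{eq 2-Hallp} with $t=1$, $\up=1$, $K_i=1$ yields $\Dt(h_i)=h_i\otimes 1+1\otimes h_i$ and the antipode formula gives $\sg(h_i)=-h_i$, again matching the primitive Hopf structure on $\afE_{i,i}\in U(\afgl)$.

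I expect the main obstacle to be the bookkeeping in the specialization step: one must be careful that $\Hallp_\mbq=\Hallp\otimes_\sZ\mbq$ really has the expected $\mbq$-basis after specializing $\up\to1$ (no collapse), so that $\ol{\Hallp_\mbq}$ genuinely decomposes as $\Hall_\mbq\otimes\mbq[h_1,\dots,h_n]$ and the divided-power elements $\leb{K_i;0\atop t}\rib_1$ become the expected polynomials in the $h_i$ rather than degenerating; and that the PBW basis of $U(\afgl)$ aligns correctly with the Hall-algebra basis of \ref{iotap}(1). Once these structural identifications are in place, verifying the Hopf axioms is routine because everything reduces to the primitive generators, and the specialized comultiplication and antipode formulas \eqref{eq 3-Hallp}, \eqref{eq 4-Hallp}, \eqref{eq 2-Hallp} do all the work.
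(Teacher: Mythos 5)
Your plan is correct in spirit but the direction of the construction differs from the paper's and introduces a well-definedness issue you have not fully resolved. You propose to define $\iota^{\geq 0}$ directly as a map out of $\ol{\Hallp_\mbq}$, by giving it on the two factors of a PBW-type tensor decomposition and then ``checking cross relations.'' The problem is that this decomposition is only a decomposition of vector spaces, and to promote the assignment on factors to an algebra homomorphism you effectively need a presentation of $\ol{\Hallp_\mbq}$ by generators and relations, which is not available a priori; declaring that the cross relations match is the right observation, but it does not by itself certify that no further relations hold in $\ol{\Hallp_\mbq}$ that the proposed map might violate.

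The paper avoids this by building the map in the opposite direction: it constructs $f:\afuglqpz\to\ol{\Hallp_\mbq}$, which is painless because $\afuglqpz$ is a universal enveloping algebra with a known presentation (the relations (a),(b) appearing later in the proof of \ref{xi}), so one only has to check that $u^+_{\afE_{i,j},1}$ and $\leb{K_i;0\atop 1}\rib_1$ satisfy those relations --- exactly the bracket computation you sketch. It then shows $f$ is bijective by matching the PBW basis of $\afuglqpz$ against the $\mbq$-basis of $\ol{\Hallp_\mbq}$, using the identity $\leb{K_i;0\atop t}\rib_1 = f\bpa{\afE_{i,i}}{t}$ derived from Lusztig's identities (your computation ``at $\up=1$'' that the divided powers become binomial polynomials in $h_i$ is essentially this step). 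Your Hopf verification on algebra generators is correct and is the same as the paper's. To repair your write-up you should either build the inverse map $f$ first, as the paper does, or give a genuine presentation of $\ol{\Hallp_\mbq}$ (for instance by proving it is the smash product of $\Hall_\mbq$ with $\mbq[h_1,\dots,h_n]$ determined by the specialized commutation rule \eqref{eq 1-Hallp}), rather than relying on the vector-space tensor decomposition alone.
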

\begin{proof}
By \cite[2.3(g9),(g10)]{Lu90} we have in $\ol{\Hallp_\mbq}$,
$\leb{K_i;\pm 1\atop 1}\rib_1=\leb{K_i;0\atop 1}\rib_1\pm 1.$
Thus by \eqref{eq 1-Hallp} we have in $\ol{\Hallp_\mbq}$,
\begin{equation*}
\begin{split}
\leb{K_i;0\atop 1}\rib_1 u_{\afE_{k,l},1}^+
-u_{\afE_{k,l},1}^+\leb{K_i;0\atop 1}\rib_1 =u_{\afE_{k,l},1}^+\left(
\leb{K_i;\dt_{\bar i,\bar k}-\dt_{\bar i,\bar l}\atop 1}\rib_1-\leb{K_i;0\atop 1}\rib_1\right)=\left( \dt_{\bar i,\bar k}-\dt_{\bar i,\bar l} \right)u_{\afE_{k,l},1}^+.
\end{split}
\end{equation*}
for $1\leq i\leq n$ and $k<l$.
This together with \ref{iotap}(2) implies that there is an algebra homomorphism
$f:\afuglqpz\ra\ol{\Hallp_\mbq}$ such that $f(\afE_{i,j})=u_{{\afE_{i,j},1}}^+$ and $f(\afE_{i,i})=\leb{K_i;0\atop 1}\rib_1$ for $i<j$.
Since, by \cite[4.1(d)]{Lu89},
$\leb {K_i;0\atop 1}\rib_1-\leb{K_i;-j\atop 1}\rib_1=j$ for  $1\leq i\leq n$ and $j\in\mbz$, it follows from \cite[4.1(f)]{Lu89}
that, we have in  $\ol{\Hallp_\mbq}$,
\begin{equation}\label{eq Hopf algebra isomorphism}
\leb{K_i;0\atop t}\rib_1=\frac{1}{r!}\prod_{0\leq j\leq t-1}\leb{K_i;-j\atop 1}\rib_1=f\left(\bpa{\afE_{i,i}}{t}\right),
\end{equation}
where $$\bigg({\afE_{i,i}\atop
t}\bigg)=\frac{\afE_{i,i}(\afE_{i,i}-1)\cdots(\afE_{i,i}-t+1)}{t!}.$$
Thus by \cite[2.14]{Lu90} and \ref{iotap}(1) $f$ sends the PBW-basis
$$\bigg\{\prod_{1\leq i\leq n\atop i<j,\,j\in\mbz}
(\afE_{i,j})^{a_{i,j}}\prod_{1\leq i\leq n}\bpa{\afE_{i,i}}{t}\,\bigg|\,
A=(a_{i,j})\in\afThnp,\,\la\in\afmbnn\bigg\}$$
for $\afuglqpz$ to a $\mbq$-basis for $\ol{\Hallp_\mbq}$. Consequently, $f$ is an algebra isomorphism. Finally, since $$\Dt(u^+_{\afE_{i,j},1})=u^+_{\afE_{i,j},1}\ot 1+1\ot u^+_{\afE_{i,j},1},\qquad\Dt\left(\leb{K_i;0\atop 1}\rib_1\right)=\leb{K_i;0\atop 1}\rib_1\ot 1+1\ot\leb{K_i;0\atop 1}\rib_1$$ and $$\sg(u^+_{\afE_{i,j},1})=-u^+_{\afE_{i,j},1},\quad \sg\left(\leb{K_i;0\atop 1}\rib_1\right)=-\leb{K_i;0\atop 1}\rib_1$$ for $i<j$, we conclude that $f$ is a Hopf algebra isomorphism.
\end{proof}

We now use Ringel--Hall algebras to define the integral form $\afuglz$ of $\afuglq$.
Let $\afuglzp=\iota^+(\Hall_\mbz)$ and
$\afuglzm=\iota^-(\Hall_\mbz^{\mathrm{op}})$, where
$\Hall_\mbz=\Hall\ot_\sZ\mbz$ and $\mbz$ is regarded as a $\sZ$-module by specializing $\up$ to $1$. Let $\afuglzz$ be the $\mbz$-submodule of $\afuglq$
spanned by $\prod_{1\leq i\leq n}\bpa{\afE_{i,i}}{\la_i}$, for  $\la\in\afmbnn$.
Let
\begin{equation*}
\begin{split}
\afuglz&=\afuglzp
\afuglzz\afuglzm.\\
\end{split}
\end{equation*}
We will prove that $\afuglz$ is a $\mbz$-subalgebra of $\afuglq$ and give a BLM realization of $\afuglz$ in \ref{realization of afuglz}. Furthermore, we will use \ref{Hopf algebra isomorphism} to show that $\afuglz$ is a $\mbz$-Hopf subalgebra of $\afuglq$ in \ref{Hopf Z-subalgebra}.


\section{Affine quantum Schur algebras}
Let $\afsygr$ be the group consisting of all permutations
$w:\mbz\ra\mbz$ such that $w(i+r)=w(i)+r$ for $i\in\mbz$.
Let $W$ be the subgroup of
$\afsygr$ consisting of $w\in\afsygr$ with
$\sum_{i=1}^rw(i)=\sum_{i=1}^ri$. By \cite{Lu83}, $W$
is the Weyl group of affine type $A$ with generators $s_i$ ($1\leq i\leq r$) defined by
setting
$s_i(j)=j$ for $j\not\equiv i,i+1\mnmod r$, $s_i(j)=j-1$ for
$j\equiv i+1\mnmod r$ and $s_i(j)=j+1$ for $j\equiv i\mnmod r$.
The subgroup of $\affSr$ generated by $s_1,\ldots,s_{r-1}$ is isomorphic to the symmetric group $\fS_r$.
Let $\rho$ be the permutation of $\mbz$ sending $j$ to $j+1$ for all $j\in\mbz$. Then $\afsygr=\langle\rho\rangle\ltimes W.$
We extend the length function $\ell$ on $W$ to $\afsygr$ by setting $\ell(\rho^mw)=\ell(w)$ for all $m\in\mbz,w\in W$.

The extended affine Hecke algebra $\afHr$ over $\sZ$ associated to
$\affSr$ is the (unital) $\sZ$-algebra with basis
$\{T_w\}_{w\in\affSr}$, and multiplication defined by
\begin{equation*}
\begin{cases} T_{s_i}^2=(\up^2-1)T_{s_i}+\up^2,\quad&\text{for }1\leq i\leq n\\
T_{w}T_{w'}=T_{ww'},\quad&\text{if}\
\ell(ww')=\ell(w)+\ell(w').
\end{cases}
\end{equation*}

For $\la=(\la_i)_{i\in\mbz}\in\afmbzn$
let
$
\sg(\la)=\sum_{1\leq i\leq n}\la_i$.
For $r\geq 0$  we set
\begin{equation*}
\afLanr=\{\la\in\afmbnn\mid\sg(\la)=r\}.
\end{equation*}
For $\la\in\afLanr$, let $\fS_\la:=\fS_{(\la_1,\ldots,\la_n)}$
be the corresponding standard Young subgroup of $\fS_r$.
For each $\la\in\afLanr$, let
$x_\la=\sum_{w\in\fS_\la}T_w\in\afHr$. The endomorphism algebras
$$\sS_\vtg(n,r):=\End_{\afHr}\biggl
(\bop_{\la\in\La_\vtg(n,r)}x_\la\afHr\biggr).$$
are called affine quantum Schur algebras (cf. \cite{GV,Gr99,Lu99}).

Following \cite{Gr99}, we will introduce a $\sZ$-basis of $\afSr$ as follows.
For $\la\in\afLanr$, let
$$\afmsD_\la=\{d\mid d\in\affSr,\ell(wd)=\ell(w)+\ell(d)\text{ for
$w\in\fS_\la$}\}.$$
Note that we have
\begin{equation}\label{minimal coset representative}
\aligned
d^{-1}\in\afmsD_\la
&\iff d(\la_{0,i-1}+1)<d(\la_{0,i-1}+2)<\cdots<d(\la_{0,i-1}+\la_i),\,\forall 1\leq i\leq n,\endaligned
\end{equation}
where $\la_{0,i-1}=\sum_{1\leq t\leq i-1}\la_t$.
Let $\afmsD_{\la,\mu}=\afmsD_{\la}\cap{\afmsD_{\mu}}^{-1}$.
For $\la,\mu\in\afLanr$ and $d\in\afmsD_{\la,\mu}$, define
$\phi_{\la,\mu}^d\in\sS_\vtg(n,r)$ as follows:
\begin{equation}\label{def of standard basis}
\phi_{\la,\mu}^d(x_\nu h)=\dt_{\mu\nu}\sum_{w\in\fS_\la
d\fS_\mu}T_wh
\end{equation}
where $\nu\in\afLanr$ and $h\in\afHr$. Then by \cite{Gr99} the set
$\{\phi_{\la,\mu}^d\mid \la,\mu\in\afLanr,\,
d\in\afmsD_{\la,\mu}\}$ forms a basis for $\sS_\vtg(n,r)$.

Recall the sets $\aftiThn$ and $\afThn$ defined in \eqref{aftiThn,afThn}.
For $A\in\aftiThn$,
let
$
\sg(A)=\sum_{1\leq i\leq n,\,
j\in\mbz}a_{i,j}.$
For $r\geq 0$, let
\begin{equation*}
\afThnr=\{A\in\afThn\mid\sg(A)=r\}.
\end{equation*}
The basis for $\afSr$ is indexed by triples
$\{(\la,d,\mu)\mid \la,\mu\in\afLanr,\,
d\in\afmsD_{\la,\mu}\}$. In what follows next, it will be
convenient to reindex these basis elements by the set $\afThnr$. For $1\leq i\leq n$, $k\in\mbz$ and $\la\in\afLa(n,r)$ let
\begin{equation*}
R_{i+kn}^{\la}=\{\la_{k,i-1}+1,\la_{k,i-1}+2,\ldots,\la_{k,i-1}+\la_i=\la_{k,i}\},
\end{equation*}
where $\la_{k,i-1}=kr+\sum_{1\leq t\leq i-1}\la_t$.
By \cite[7.4]{VV99} (see also \cite[9.2]{DF09}), there is
a bijective map
\begin{equation*}
{\jmath_\vtg}:\{(\la, d,\mu)\mid
d\in\afmsD_{\la,\mu},\la,\mu\in\afLanr\}\lra\afThnr
 \end{equation*}
sending $(\la, w,\mu)$ to $A=(a_{k,l})$, where $a_{k,l}=|R_k^\la\cap wR_l^\mu|$ for all $k,l\in\mbz$.
If $\la,\mu\in\afLanr$ and $d\in\afmsD_{\la,\mu}$ are such that $A=\jmath_\vtg(\la,d,\mu)$, we will denote $\phi_{\la,\mu}^d$ by $e_A$ and let
\begin{equation*}
[A]=\up^{-d_A}e_{A},\quad\text{ where } \quad
d_{A}=\sum_{1\leq i\leq n\atop i\geq k,j<l}a_{i,j}a_{k,l}.
\end{equation*}
By \cite[1.11]{Lu99}, the $\sZ$-linear map
\begin{equation}\label{taur}
\tau_r:\afSr\lra\afSr,\;\;[A]\lm [\tA]
\end{equation} is an algebra
anti-involution, where $\tA$ is the transpose of $A$.

Let
$$\afThnpm=\{A\in\afThn\mid a_{i,i}
=0\text{ for all $i$}\}.$$
For $A\in\afThnpm$ and $\bfj\in\afmbzn$, define $A(\bfj,r)\in
\afSr$ by
\begin{equation*}
A(\bfj,r)=
\sum_{\la\in\La_\vtg(n,r-\sg(A))}\up^{\sum_{1\leq i\leq n}\la_ij_i}[A+\diag(\la)].
\end{equation*}
The affine quantum Schur algebra $\afSr$ and the Ring-Hall
algebra $\Hall$ can be related by the following
algebra homomorphism defined in \cite[7.6]{VV99}.
\begin{Prop} \label{afzrpm}
{\rm(1)} There is a $\sZ$-algebra homomorphism
$$\etarm:\Hall^{\rm op}\lra\afSr,\;\;\ti u_A\lm (\tA)(\bfl,r)\;\;
\text{for all $A\in\afThnp$}.$$

{\rm(2)} Dually, there is a $\sZ$-algebra homomorphism
$$\etarp:\Hall\lra\afSr,\;\;\ti u_A \lm A(\bfl,r)\;\;\text{for all
$A\in\afThnp$}.$$
\end{Prop}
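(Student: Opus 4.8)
We first note that (1) follows from (2). Every diagonal matrix $\diag(\la)$ is symmetric and $\sg(A)=\sg(\tA)$, so the anti-involution $\tau_r$ of \eqref{taur} satisfies $\tau_r(A(\bfl,r))=(\tA)(\bfl,r)$ for all $A\in\afThnp$. Granting (2), the composite $\tau_r\circ\etarp$ is then an algebra anti-homomorphism $\Hall\to\afSr$, i.e.\ an algebra homomorphism $\Hall^{\rm op}\to\afSr$, carrying $\ti u_A\mapsto\tau_r(A(\bfl,r))=(\tA)(\bfl,r)$; this is $\etarm$.

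The plan for (2) has three steps. The first step is to produce the homomorphism: $\afSr$ acts faithfully on $\Omega:=\bop_{\la\in\afLanr}x_\la\afHr$, since $\afSr=\End_{\afHr}(\Omega)$ by definition, and one equips $\Omega$ with a left $\Hall$-module structure commuting with the right $\afHr$-action --- the affine Schur-Weyl action, obtained either by identifying $\Omega$ with an $r$-fold affine tensor space on which $\Hall$ acts through its coproduct (cf.\ \ref{Green Xiao}) or directly from the Hecke-algebra combinatorics of \eqref{def of standard basis}. This yields an algebra homomorphism $\etarp\colon\Hall\to\End_{\afHr}(\Omega)=\afSr$, unital because $u_0\mapsto 0(\bfl,r)=\sum_{\la\in\afLanr}[\diag(\la)]=\id$.

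The second step is to establish inside $\afSr$, directly from \eqref{def of standard basis}, the multiplication formula
\[
A_\mu(\bfl,r)\cdot C(\bfl,r)=\sum_{D\in\afThnp}g^D_{A_\mu,C}\,D(\bfl,r)\qquad(\mu\in\afmbnn,\ C\in\afThnp),
\]
where $A_\mu:=\sum_{1\le i\le n}\mu_i\afE_{i,i+1}\in\afThnpm$ is the matrix attached to the semisimple module $S_\mu$ under the $\wp^+$-dictionary and $g^D_{A_\mu,C}$ is the coefficient of $\ti u_D$ in the product $\ti u_{A_\mu}\ti u_C$ computed in $\Hall$; for $\mu=\afbse_i$ this is the affine analogue of \cite[3.9]{BLM} and \cite[5.3]{BLM}, and for sincere $\mu$ it is the corresponding formula for the generator $u_{[S_\mu]}$. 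The third step is to deduce $\etarp(\ti u_A)=A(\bfl,r)$ for all $A\in\afThnp$ by induction on $\bfd(A)$ and, for a fixed dimension vector, on the order $\p$: from the base case $\etarp(\ti u_\mu)=A_\mu(\bfl,r)$ (a direct evaluation of the $\Hall$-action on $\Omega$), use \ref{tri Hall} to write $\ti u_{(w_A)}=\ti u_{x_1\bsb_1}\cdots\ti u_{x_t\bsb_t}=\ti u_A+\sum_{B\p A,\,\bfd(B)=\bfd(A)}f_{B,A}\ti u_B$; iterating the displayed formula on the left-hand product, and applying $\etarp$ term by term on the right while invoking the inductive hypothesis, then forces $\etarp(\ti u_A)=A(\bfl,r)$.

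The main obstacle will be step one together with the multiplication formula of step two. Unlike in the finite-type setting of \cite{BLM}, the Ringel--Hall algebra of the cyclic quiver strictly contains its composition subalgebra (the part generated by the $u_{[S_i]}$, isomorphic to the positive part of quantum $\afsl$): it also has the generators $u_{[S_\mu]}$ for sincere $\mu$, whose images $A_\mu(\bfl,r)$ are not composition monomials in the $\afE_{i,i+1}(\bfl,r)$, so the $\Hall$-action and the multiplication formula on these \emph{imaginary} elements must be handled separately --- it is exactly the triangular relation of \ref{tri Hall} that then reduces an arbitrary $\ti u_A$ to these generators. One must also control finiteness throughout, the matrices in $\afMnz$ and the sums defining $A(\bfl,r)$ being bi-infinite and periodic; this bookkeeping is what the geometric model of \cite[7.6]{VV99} --- periodic lattices, affine flags, convolution --- handles cleanly, and the statement in the present combinatorial indexing then follows by translating through $\jmath_\vtg$.
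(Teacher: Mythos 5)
The paper gives no proof of this proposition --- it simply cites \cite[7.6]{VV99}, where the homomorphism is produced geometrically via convolution on products of periodic lattice flag varieties. Your outline is a reasonable reconstruction of what a proof must contain, and your reduction of (1) to (2) by composing with $\tau_r$ is a clean observation the paper does not bother to record: since $\tau_r(A(\bfl,r))=(\tA)(\bfl,r)$ for $A\in\afThnp$, the composite $\tau_r\circ\etarp$ is an anti-homomorphism and so serves as $\etarm$.

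That said, your three steps carry a logical redundancy. Suppose the multiplication formula of your step two --- $A_\mu(\bfl,r)\,C(\bfl,r)=\sum_D g^D_{A_\mu,C}D(\bfl,r)$ with $g^D_{A_\mu,C}$ the Hall coefficient --- is established \emph{directly} from \eqref{def of standard basis} for all $\mu\in\ti I$ and all $C\in\afThnp$. Then defining $\etarp$ on the basis by $\ti u_A\mapsto A(\bfl,r)$ and extending by $\sZ$-linearity already forces multiplicativity: by \ref{tri Hall} the $\ti u_\mu$ generate $\Hall$; the formula says $\etarp(\ti u_\mu\ti u_C)=\etarp(\ti u_\mu)\etarp(\ti u_C)$, hence by linearity $\etarp(\ti u_\mu z)=\etarp(\ti u_\mu)\etarp(z)$ for all $z$, hence by induction on word length $\etarp(mz)=\etarp(m)\etarp(z)$ for any monomial $m$ in the generators, hence $\etarp(xy)=\etarp(x)\etarp(y)$ for all $x,y$. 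Neither the separately constructed $\Hall$-module structure on $\Omega$ (your step one) nor the $\p$-induction (your step three) is then needed. Conversely, if you instead prove step one, the construction of that action already subsumes the convolution computation, and step two falls out as a byproduct. Either way the entire content is the identity of your step two, which is exactly what \cite[7.6]{VV99} proves and the paper declines to reprove. One small caution on your alternative description of step one: the coproduct in \ref{Green Xiao} is defined on the \emph{extended} Hall algebra and carries $\ti K$-twists, so saying ``$\Hall$ acts on a tensor power through its coproduct'' needs either passage through the extended algebra or a specialization; the bare composition algebra of the cyclic quiver does not act on a tensor power by a coproduct living inside $\Hall$ itself.
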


We end this section by generalizing \cite[3.9]{BLM} to the affine case. This is the first key result in proving stabilization property of
multiplication for affine Schur algebras.

First we will use the triangular relation  for Ringel--Hall algebras to get similar relations for affine quantum Schur algebras.
For $\bsa\in\afmbnn$ let
\begin{equation}\label{Aa}
A_\bsa=\sum_{1\leq i\leq n}a_i\afE_{i,i+1}\in\afThnp,\quad
B_\bsa={}^t (A_\bsa)=\sum_{1\leq i\leq n}a_i\afE_{i+1,i}\in\afThnm.
\end{equation}
For $w=\bsa_1\bsa_2\cdots\bsa_m\in\ti\Sg$ with the tight form
$\bsb_1^{x_1}\bsb_2^{x_2}\cdots\bsb_t^{x_t}$, let
\begin{equation*}
\begin{split}
\ttm_{(w),r}^+&=\etarp(\ti u_{(w)})=A_{x_1\bsb_1}(\bfl,r)A_{x_2\bsb_2}(\bfl,r)\cdots
A_{x_t\bsb_t}(\bfl,r)\in\afSr,\\
\ttm_{(w),r}^-&=B_{x_1\bsb_1}(\bfl,r)B_{x_2\bsb_2}(\bfl,r)\cdots
B_{x_t\bsb_t}(\bfl,r)\in\afSr.
\end{split}
\end{equation*}
For $A\in\aftiThn$, we write
\begin{equation}\label{A^+,A^-,A^0}
A=A^++A^0+A^-
\end{equation}
 where $A^+\in\afThnp$,
$A^-\in\afThnm$ and $A^0$ is a diagonal matrix.

\begin{Lem}
For any $A\in\afThnpm$, there exist $w_{A^+},w_{A^-}\in\ti\Sg$ such that $\wp^+(w_{A^+})=A^+$, $\wp^-(w_{A^-})=A^-$ and
\begin{equation}\label{tri-positive-negative}
\begin{split}
\ttm_{(w_{A^+}),r}^+&=\sum_{B\in\afThnp,\,B\pr A^+\atop\bfd(A^+)=\bfd(B)}f_{{B,A^+}}B(\bfl,r)\qquad
\ttm_{(w_{A^-}),r}^-=\sum_{B\in\afThnm,\,B\pr A^-\atop \bfd(\tB)=\bfd({}^t\!(A^-))}g_{{B,A^-}}B(\bfl,r)
\end{split}
\end{equation}
for any $r\geq 0$, where $f_{{B,A^+}},g_{{B,A^-}}\in\sZ$ is independent of $r$ and $f_{{A^+,A^+}}=g_{{A^-,A^-}}=1$.
\end{Lem}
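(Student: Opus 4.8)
The plan is to push the triangular relation of Proposition~\ref{tri Hall} through the algebra homomorphisms $\etarp$ and $\etarm$ of Proposition~\ref{afzrpm}, and then verify that the resulting identity in $\afSr$ has the required $r$-independence. First I would apply Proposition~\ref{tri Hall} to $A^+\in\afThnp$: it gives a word $w_{A^+}\in\ti\Sg$ with $\wp^+(w_{A^+})=A^+$ and
\begin{equation*}
\ti u_{(w_{A^+})}=\sum_{B\in\afThnp,\,B\pr A^+,\,\bfd(A^+)=\bfd(B)}f_{B,A^+}\,\ti u_B,\qquad f_{A^+,A^+}=1,\ f_{B,A^+}\in\sZ.
\end{equation*}
Applying the $\sZ$-algebra homomorphism $\etarp\colon\Hall\to\afSr$ and using $\etarp(\ti u_B)=B(\bfl,r)$ together with the definition $\ttm_{(w_{A^+}),r}^+=\etarp(\ti u_{(w_{A^+})})$ yields the first identity in \eqref{tri-positive-negative} verbatim. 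The crucial point is that the scalars $f_{B,A^+}$ come from Proposition~\ref{tri Hall}, where they are computed inside the generic Hall algebra $\Hall$ with no reference to $r$; hence they are automatically independent of $r$, and the same word $w_{A^+}$ works for every $r\geq 0$. This is exactly the role played by \cite[3.9]{BLM} in the classical finite-type setting.

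**The negative part.** For $A^-\in\afThnm$, I would set ${}^t\!(A^-)\in\afThnp$ and apply Proposition~\ref{tri Hall} to it, obtaining a word $w'\in\ti\Sg$ with $\wp^+(w')={}^t\!(A^-)$ and a triangular expansion $\ti u_{(w')}=\sum_{B'\pr {}^t\!(A^-),\,\bfd({}^t\!(A^-))=\bfd(B')}g'_{B',{}^t\!(A^-)}\,\ti u_{B'}$ with leading coefficient $1$. Now take $w_{A^-}=\tw'$, the reversed word, so that $\wp^-(w_{A^-})={}^t(\wp^+(\tw{}')) $; one checks this equals $A^-$. Applying $\etarm\colon\Hall^{\mathrm{op}}\to\afSr$, under which $\ti u_{B'}\mapsto ({}^t\!B')(\bfl,r)$, and transposing the index set $B={}^t\!B'$ (so $B\in\afThnm$, $B\pr A^-$ iff $B'\pr {}^t\!(A^-)$ by compatibility of $\pr$ with transpose, and $\bfd(\tB)=\bfd(B')=\bfd({}^t\!(A^-))$), I obtain the second identity in \eqref{tri-positive-negative} with $g_{B,A^-}=g'_{{}^t\!B,{}^t\!(A^-)}\in\sZ$ and $g_{A^-,A^-}=1$. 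Equivalently, one can bypass $\etarm$ entirely and instead apply the anti-involution $\tau_r$ of \eqref{taur} to the already-established positive identity, noting that $\tau_r$ sends $B(\bfl,r)$-type sums to their transposes; I would use whichever bookkeeping is cleaner.

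**Main obstacle.** The genuinely load-bearing step is matching the order relations and dimension-vector constraints across the transpose and across the homomorphisms. One must verify that $B\pr {}^t\!(A^-)$ in $\afThnp$ translates precisely to $B'={}^t\!B\pr A^-$ in $\afThnm$ under the definition of $\pr$ via the functions $\sg_{i,j}$ — this uses the elementary fact that $\sg_{i,j}({}^t\!A)=\sg_{j,i}(A)$ and that $i\ne j$ is symmetric — and that the dimension-vector condition $\bfd(A^+)=\bfd(B)$ in Proposition~\ref{tri Hall} becomes exactly the stated $\bfd(\tB)=\bfd({}^t\!(A^-))$. I expect everything else (that $\etarp,\etarm$ are algebra maps sending $\ti u_B$ to $B(\bfl,r)$, resp.\ $({}^t\!B)(\bfl,r)$; that $\ttm^{\pm}_{(w),r}$ are the images of $\ti u_{(w)}$; that the coefficients are in $\sZ$ and $r$-free) to be immediate from the cited results, so no further serious computation is needed.
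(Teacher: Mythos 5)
Your proposal is correct and takes essentially the same approach as the paper: apply Proposition~\ref{tri Hall} to $A^+$ and to ${}^t\!(A^-)$, push the triangular expansion through the Hall-algebra homomorphisms, and use the compatibility of $\pr$ and of dimension vectors with transposition. The paper's proof of the negative half is exactly the $\tau_r$ alternative you mention at the end (apply $\etarp$ to the expansion for ${}^t\!(A^-)$, then transpose via the anti-involution $\tau_r$ and set $w_{A^-}=\tw$), which differs from your primary $\etarm$-route only in bookkeeping.
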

\begin{proof}
The first equation follows from \ref{tri Hall} and \ref{afzrpm}.
Now we assume $A\in\afThnm$. Then $\tA\in\afThnp$. Thus there exist $w \in\ti\Sg$ such that $\wp^+(w)=\tA$ and
\begin{equation*}
\ttm_{(w),r}^+=\sum_{B\in\afThnp,\,B\pr \tA\atop\bfd(\tA)=\bfd(B)}f_{{B,\tA}}B(\bfl,r).
\end{equation*}
Since $X\pr Y$ if and only if $\tX\pr\tY$ for
$X,Y\in\afThn$, applying the antiautomorphism $\tau_r$ defined in
\eqref{taur} to the above equation yields
\begin{equation*}
\ttm_{(w_A),r}^- =\tau_r(\ttm_{(w),r}^+)=\sum_{B\in\afThnp,\,B\pr \tA\atop\bfd(\tA)=\bfd(B)}f_{{B,\tA}}(\tB)(\bfl,r)
=\sum_{C\in\afThnm,\,C\pr A\atop\bfd(\tA)=\bfd(\tC)}g_{C,A}C(\bfl,r),
\end{equation*}
where $w_A=\tw$ and $g_{C,A}=f_{{\tC,\tA}}$.
\end{proof}

For $A\in\aftiThn$ let
\begin{equation*}
\begin{split}
\ro(A)=\bigl(\sum_{j\in\mbz}a_{i,j}\bigr)_{i\in\mbz},\,\,&\qquad
\co(A)=\bigl(\sum_{i\in\mbz}a_{i,j}\bigr)_{j\in\mbz}\in\afmbzn.\\
\end{split}\end{equation*}
For $A\in\aftiThn$ with $\sg(A)=r$, we denote $[A]=0\in\afSr$ if $a_{i,i}<0$ for some $i\in\mbz$.
For $A\in\afThnr$, let
\begin{equation*}
\bfsg(A)=(\sg_i(A))_{i\in\mbz}\in\afLanr
\end{equation*}
where
$\sg_{i}(A)=a_{i,i}+\sum_{j<i}(a_{i,j}+a_{j,i}).$
In the following discussion we shall denote by $[A]+\text{lower terms}$ an element of $\afSr$ which is equal to $[A]$ plus a $\sZ$-linear combination of elements $[A']$ with $A'\in\afThnr$, $A'\p A$, $\co(A')=\co(A)$ and $\ro(A')=\ro(A)$.
The following triangular relation for affine quantum Schur algebras is given in \cite[3.7.7]{DDF}.
\begin{Prop}\label{tri-affine Schur algebras}
For $A\in\afThnpm$ and $\la\in\afLanr$, we have
\begin{equation*}
A^+(\bfl,r)[\diag(\la)]A^-(\bfl,r)
=[A+\diag(\la-\bfsg(A))]+\text{lower terms}.
\end{equation*}
In particular, the set $$\{A^+(\bfl,r)[\diag(\la)]A^-(\bfl,r)\mid A\in\afThnpm,\,\la\in\afLanr,\,\la\geq\bfsg(A)\}$$ forms a $\sZ$-basis for $\afSr$, where the order relation $\leq$ is defined in \eqref{order on afmbzn}.
\end{Prop}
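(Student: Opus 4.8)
The plan is to carry out the two outer multiplications separately: first $A^+(\bfl,r)[\diag(\la)]$, which I expect to be an exact identity with no error term, and then the multiplication of the result by $A^-(\bfl,r)$, which is where the lower terms appear. Since $\bfj=\bfl$, the $\up$-powers in the definition of $A^\pm(\bfl,r)$ are trivial, so $A^+(\bfl,r)=\sum_{\mu}[A^++\diag(\mu)]$ with $\mu$ ranging over $\afLanr$ satisfying $\sg(\mu)=r-\sg(A^+)$, and likewise for $A^-(\bfl,r)$. Reading off \eqref{def of standard basis} I would check directly that $[\diag(\la)]$ is the projector onto the $\la$-weight component, i.e. $e_C[\diag(\la)]=\dt_{\co(C),\la}e_C$ and $[\diag(\la)]e_C=\dt_{\ro(C),\la}e_C$, hence $[C][\diag(\la)]=\dt_{\co(C),\la}[C]$. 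Applying this to each summand and using $\co(A^++\diag(\mu))=\bfsg(A^+)+\mu$ (here $\bfsg(A^+)=\co(A^+)$ since $A^+$ is strictly upper triangular) gives
\[
A^+(\bfl,r)[\diag(\la)]=[C_0],\qquad C_0:=A^++\diag(\la-\bfsg(A^+)),
\]
whenever $\la\geq\bfsg(A^+)$ (and $0$ otherwise; in the range relevant to the second assertion one has $\la\geq\bfsg(A)\geq\bfsg(A^+)$). So everything reduces to computing $[C_0]\cdot A^-(\bfl,r)$.

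To handle $[C_0]\cdot A^-(\bfl,r)$ I would first expand $A^-(\bfl,r)$ into products of elementary lower-triangular generators. By \eqref{tri-positive-negative}, for the $X\in\afThnm$ with $\bfd(\tX)=\bfd({}^t\!(A^-))$ the matrix $(g_{Y,X})$ over the set $\{X\pr A^-:\bfd(\tX)=\bfd({}^t\!(A^-))\}$ — finite, since matrices with prescribed, finitely-supported row and column sums have bounded entries and bounded support — is unitriangular for $\p$, so inverting it yields $A^-(\bfl,r)=\ttm^-_{(w_{A^-}),r}+\sum_{X\p A^-}h_X\,\ttm^-_{(w_X),r}$ with $h_X\in\sZ$, each $\ttm^-_{(w_X),r}=B_{x_1\bsb_1}(\bfl,r)\cdots B_{x_t\bsb_t}(\bfl,r)$ a product of the generators $B_\bsa(\bfl,r)$, $\bsa\in\afmbnn$. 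Then I would multiply $[C_0]$ through the factors $B_{x_i\bsb_i}(\bfl,r)$ one at a time using the affine analogue of the BLM multiplication formula (\cite{VV99}, see also \cite{DF09}): for $C\in\afThnr$ and $\bsa\in\afmbnn$,
\[
[C]\cdot B_\bsa(\bfl,r)=\up^{e}\,[\,C+B_\bsa-\diag(\ro(B_\bsa))\,]+\text{lower terms},
\]
the exponent $e$ being $0$ at $\bfj=\bfl$ and the lower terms being a $\sZ$-combination of $[C']$ with $C'\p C+B_\bsa-\diag(\ro(B_\bsa))$, $\ro(C')=\ro(C)$, $\co(C')=\co(C)+\co(B_\bsa)-\ro(B_\bsa)$. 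Iterating over the factors of $\ttm^-_{(w_{A^-}),r}$, using $\wp^-(w_{A^-})=A^-$ so that the cumulative lower-triangular matrix added is exactly $A^-$, and $\ro(A^-)=\bfsg(A)-\bfsg(A^+)$, the leading term of $[C_0]\cdot\ttm^-_{(w_{A^-}),r}$ comes out as $[\,C_0+A^--\diag(\ro(A^-))\,]=[\,A+\diag(\la-\bfsg(A))\,]$, with coefficient $1$; all remaining terms, together with every term of $[C_0]\cdot\ttm^-_{(w_X),r}$ for $X\p A^-$ (for which the resulting leading matrix is strictly below $A+\diag(\la-\bfsg(A))$ in $\p$ because $X\p A^-$), are lower terms. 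This establishes the first assertion.

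For the ``in particular'' part I would observe that $(A,\la)\mapsto A+\diag(\la-\bfsg(A))$ is a bijection from $\{(A,\la)\mid A\in\afThnpm,\ \la\in\afLanr,\ \la\geq\bfsg(A)\}$ onto $\afThnr$, with inverse $C\mapsto(C-C^0,\,\bfsg(C))$, using $\sg(\bfsg(A))=\sg(A)$ and $C^0\geq0$, where $C^0$ denotes the diagonal part of $C$. Since for each $C$ the set $\{C'\in\afThnr:C'\pr C,\ \ro(C')=\ro(C),\ \co(C')=\co(C)\}$ is finite, the first assertion shows the transition matrix between $\{A^+(\bfl,r)[\diag(\la)]A^-(\bfl,r)\}$ and the standard basis $\{[C]\}$ is, on each block of fixed row and column sums, unitriangular for any total order refining $\p$. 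Hence it is invertible and $\{A^+(\bfl,r)[\diag(\la)]A^-(\bfl,r)\}$ is a $\sZ$-basis of $\afSr$.

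I expect the main obstacle to be the third step: pinning down the exact shape of $[C]\cdot B_\bsa(\bfl,r)$ and, above all, the order-compatibility statement needed to iterate it, namely that $C'\p C''$ implies every term of $[C']\cdot B_\bsa(\bfl,r)$ lies strictly below the leading matrix of $[C'']\cdot B_\bsa(\bfl,r)$ in $\p$. Because the matrices here are bi-infinite and periodic and $\p$ is defined through the corner sums $\sg_{i,j}$, controlling how $\p$ propagates through an iterated product — the affine counterpart of the order estimates in \cite[\S3]{BLM} — is delicate, and it is exactly here that one must verify the lower terms stay lower at every intermediate stage.
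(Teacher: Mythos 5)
The paper does not prove this Proposition: it cites it from {\rm[DDF, 3.7.7]}, so there is no ``paper's own proof'' to compare your argument against. That said, your overall strategy (do the $A^+$ multiplication exactly, then expand $A^-(\bfl,r)$ in terms of the monomials $\ttm^-_{(w_X),r}$ and track leading terms through the iterated product) is the right sort of argument; the first step and the ``in particular'' part are both correct. The step $A^+(\bfl,r)[\diag(\la)]=[C_0]$ is indeed an exact identity, and the bijection $(A,\la)\mapsto A+\diag(\la-\bfsg(A))$ onto $\afThnr$ together with the finiteness of $\{C'\pr C:\ro(C')=\ro(C),\ \co(C')=\co(C)\}$ does give the unitriangular change of basis.

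The gap is in your step three, and it is not a cosmetic one. The multiplication formula you write down, namely $[C]\cdot B_\bsa(\bfl,r)=\up^e[\,C+B_\bsa-\diag(\ro(B_\bsa))\,]+\text{lower terms}$, is false in general. Applying $\tau_r$ to the fundamental multiplication formula, $[C]\cdot B_\bsa(\bfl,r)$ is a signed sum of $[C']$ with $C'$ obtained by shifting mass from column $h+1$ to column $h$ in some row $p$; the $\p$-maximal such move picks $p$ as far \emph{below} the diagonal as the entries of $C$ allow, not $p=h+1$. Concretely, in $\sS_\vtg(3,1)$ take $C=\afE_{3,2}$ and $\bsa=\afbse_1$: then $[\afE_{3,2}]\cdot[\afE_{2,1}]=[\afE_{3,1}]$, while $C+B_\bsa-\diag(\ro(B_\bsa))=\afE_{3,2}+\afE_{2,1}-\afE_{2,2}$ has a negative entry, and $\afE_{3,1}$ is strictly \emph{larger} than it in $\pr$ (compare $\sg_{3,1}$). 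So your claimed leading term is neither the actual leading term nor an upper bound for the terms that appear. Worse, even if it held, iterating it would give the leading matrix $C_0+\sum_i B_{x_i\bsb_i}-\diag(\ro(\sum_iB_{x_i\bsb_i}))$, and since each $B_{x_i\bsb_i}$ is supported on the sub-diagonal, the sum $\sum_iB_{x_i\bsb_i}$ is sub-diagonal rather than equal to $A^-$; the phrase ``the cumulative lower-triangular matrix added is exactly $A^-$'' is simply not true (the condition $\wp^-(w_{A^-})=A^-$ is a statement about generic extensions of semisimples, not about matrix addition). What actually happens is that the $\p$-maximal terms at successive stages push entries progressively farther from the diagonal, and it is this cascade — exactly the kind of move your formula forbids — that builds up $A^-$. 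Establishing that the end result of this cascade is $[A+\diag(\la-\bfsg(A))]$ with coefficient $1$, and that everything else is strictly lower, is the actual content of {\rm[DDF, 3.7.7]} (and of {\rm[BLM, 3.9]} in the finite case), and it is precisely the point your proof leaves unproved.
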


\begin{Lem}\label{lem for generalization of 3.9 in BLM}
Let $A_i,B_j\in\afThnpm$ ($1\leq i\leq s$, $1\leq j\leq t$) and $\la\in\afLanr$. Then we have
\begin{equation*}
\begin{split}
&\quad\quad A_1(\bfl,r)\cdots A_s(\bfl,r)[\diag(\la)]B_1(\bfl,r)\cdots B_t(\bfl,r)\\
&=
[A_1+\diag(\la^{(1)})]\cdots[A_s+\diag(\la^{(s)})]
[B_1+\diag(\mu^{(1)})]\cdots[B_t+\diag(\mu^{(t)})]
\end{split}
\end{equation*}
where $\la^{(i)}=\la-\co(A_i)+\sum_{i+1\leq k\leq s}(\ro(A_k)-\co(A_k))$ and $\mu^{(j)}= \la-\ro(B_j)+\sum_{1\leq k\leq j-1}(\co(B_k)-\ro(B_k))$ for $1\leq i\leq s$ and $1\leq j\leq t$.
\end{Lem}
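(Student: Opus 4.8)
The plan is to reduce the multi-factor identity to repeated application of the two-factor case, which itself is essentially \ref{tri-affine Schur algebras} read in the right way. First I would record the key bookkeeping fact: for $A\in\afThnpm$ and a diagonal part $\diag(\nu)$, the element $A(\bfl,r)$ acts on the span of $[\diag(\nu)]$-type elements by ``shifting'' the diagonal, and crucially $A(\bfl,r)[\diag(\nu)]=[A+\diag(\nu-\co(A))]$ whenever $A^-=0$ (so $A=A^+$), and dually $[\diag(\nu)]A(\bfl,r)=[A+\diag(\nu-\ro(A))]$ whenever $A=A^-$. These follow from \ref{tri-affine Schur algebras} in the degenerate cases $A^-=0$ and $A^+=0$ respectively, where the ``lower terms'' vanish because there is no room for a strictly smaller matrix with the prescribed row and column sums. (One checks: when $A=A^+$, the product $A^+(\bfl,r)[\diag(\la)]\cdot 0(\bfl,r)$ already equals $[A+\diag(\la-\bfsg(A))]$ with $\bfsg(A^+)=\co(A^+)$, and no lower terms can occur since $\ro$ and $\co$ are fixed and $A^+$ is already ``extremal'' among upper triangular matrices with those margins — more carefully, this is just the statement that $\etarp$ composed with the obvious comparison recovers $A(\bfl,r)[\diag(\la)]$ on the nose, cf.\ the definition of $A(\bfj,r)$.)

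Second, I would peel off the factors one at a time. Starting from the right, multiply $[\diag(\la)]B_1(\bfl,r)$: by the degenerate case above this equals $[B_1+\diag(\la-\ro(B_1))]$. Now I need to understand $[B_1+\diag(\nu)]\,B_2(\bfl,r)$; but $B_1+\diag(\nu)$ has diagonal part $\nu+\ro(B_1)$ as an element of $\afThnr$... more to the point, I want to commute $B_2(\bfl,r)$ to act only on the diagonal. The clean way is to instead process the product as $\big(A_1(\bfl,r)\cdots A_s(\bfl,r)\big)\cdot[\diag(\la)]\cdot\big(B_1(\bfl,r)\cdots B_t(\bfl,r)\big)$ and argue by a double induction: induct on $s$ from the left and on $t$ from the right, each time using that multiplying a ``pure'' $A^+$-type (resp.\ $A^-$-type) divided power $A_i(\bfl,r)$ against an already-assembled element $[A_{i+1}+\diag(*)]\cdots[\diag(*)]\cdots[B_t+\diag(*)]$ only affects the leftmost (resp.\ rightmost) factor and shifts its diagonal by $-\co(A_i)$ (resp.\ $-\ro(B_j)$), because the remaining factors have zero overlap in the relevant rows/columns with $A_i$ after the shift. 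The accumulated shifts are exactly the sums $\sum_{i+1\le k\le s}(\ro(A_k)-\co(A_k))$ and $\sum_{1\le k\le j-1}(\co(B_k)-\ro(B_k))$ appearing in the statement: the term $\ro(A_k)-\co(A_k)$ records how the $k$-th factor's row-sum (what it ``receives'') minus column-sum (what it ``emits'') perturbs the diagonal seen by factor $i$.

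Third, I would make precise the ``no lower terms / affects only one factor'' claims. For a single upper-triangular $A=A^+$, the operator $A(\bfl,r)$ is, by \ref{afzrpm}(2), the image of $\ti u_{A}$ (up to the triangular relation \eqref{tri-positive-negative}), hence lies in the subalgebra generated by the $A_{\bsa}(\bfl,r)$; and multiplication by such an element on a basis element $[D]$ with $D\in\afThnr$ produces $[D']$ with $D'\ggeq D$ in the Bruhat-type order, $D'-D$ supported strictly above the diagonal in the relevant block, with $\ro,\co$ preserved. Combined with \ref{tri-affine Schur algebras} (which handles the single $A^+[\diag]A^-$ sandwich exactly), an induction on $s+t$ reduces every case to that Proposition applied to each factor in turn against a diagonal, with the diagonal updated by the shift rule. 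The main obstacle — and the step I expect to need the most care — is the precise verification that when we multiply $A_i(\bfl,r)$ into the already-assembled product of $[B+\diag(*)]$'s, the ``cross terms'' with the other factors genuinely vanish and no lower-order terms are generated; this is where one must use that $A_i\in\afThnpm$ (strictly off-diagonal) together with the exact diagonal-shift formula, so that \ref{tri-affine Schur algebras} applies with an \emph{equality} rather than just ``$+\text{lower terms}$''. Everything else is the bookkeeping of the shifts $\la^{(i)},\mu^{(j)}$, which is a routine telescoping once the one-step formula is established.
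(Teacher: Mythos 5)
Your overall plan — peel off the divided-power factors one at a time and argue that, against the developing product, only one summand of each $A_i(\bfl,r)$ and $B_j(\bfl,r)$ survives — is the right strategy and matches the paper's proof in spirit. But the engine you propose to drive the one-step identity is both unnecessary and in one place false, and the reduction as written does not cover the generality of the statement. The key fact is this: for \emph{every} $A\in\afThnpm$ (not only for $A=A^+$ or $A=A^-$), directly from $A(\bfl,r)=\sum_{\mu}[A+\diag(\mu)]$ and the orthogonality of the idempotents $[\diag(\la)]$, one has the exact identities $A(\bfl,r)[\diag(\la)]=[A+\diag(\la-\co(A))]$ and $[\diag(\la)]A(\bfl,r)=[A+\diag(\la-\ro(A))]$, equivalently $A(\bfl,r)[\diag(\la)]=[\diag(\la-\co(A)+\ro(A))]A(\bfl,r)$. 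Nothing about ``lower terms'' or Proposition~\ref{tri-affine Schur algebras} is involved: among the summands of $A(\bfl,r)$ only the one whose column (resp.\ row) vector matches $\la$ gives a nonzero product with the idempotent, and that product is exactly that basis element. Together with $[\diag(\la)]=[\diag(\la)]^2$, the lemma then follows by repeatedly commuting idempotents through the divided powers and telescoping the shifts $\co(A_k)-\ro(A_k)$ and $\ro(B_k)-\co(B_k)$, which is precisely your bookkeeping.

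Two concrete problems in the proposal. First, you restrict the shift formula to the ``pure'' cases $A=A^+$, $A=A^-$, but the lemma allows arbitrary $A_i,B_j\in\afThnpm$; since the identity above holds for all such $A$, the restriction is not only unnecessary but leaves the actual statement unproved. Second, your justification that in the degenerate case of Proposition~\ref{tri-affine Schur algebras} the lower terms vanish because ``there is no room for a strictly smaller matrix with the prescribed row and column sums'' is false: for $n=3$, the matrix $\afE_{1,1}+\afE_{1,2}+\afE_{2,3}$ has the same row and column sums as $\afE_{1,1}+\afE_{2,2}+\afE_{1,3}$ and is $\prec$ it (already $\sg_{1,3}$ drops from $1$ to $0$). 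The lower terms vanish, but for the direct reason stated above, not by any extremality argument. Once you replace the appeal to Proposition~\ref{tri-affine Schur algebras} with the direct observation — which is what the paper does — the induction you sketch closes without further ado.
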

\begin{proof}
Clearly,  for any $A\in\afThnpm$ and $\la\in\afLanr$, we have $[\diag(\la)]=[\diag(\la)]^2$ and
\begin{equation}\label{commutative formula}
A(\bfl,r)[\diag(\la)]=[\diag(\la-\co(A)+\ro(A))]A(\bfl,r).
\end{equation}
Repeatedly using \eqref{commutative formula}, we conclude
the assertion.
\end{proof}

Now we can prove the affine version of \cite[3.9]{BLM}.
\begin{Prop}\label{generalization of BLM 3.9}
Let $A\in\afTh(n,b)$. We choose $w_{A^+},w_{A^-}\in\ti\Sg$ such that \eqref{tri-positive-negative} hold. We assume $w_{A^+}$ and $w_{A^-}$ have tight form  $w_{A^+}=\bsa_1^{x_1}\cdots\bsa_s^{x_s}$, $w_{A^-}=\bsb_1^{y_1}\cdots\bsb_t^{y_t}$ and let $A_i=A_{x_i\bsa_i}$, $B_j=B_{y_j\bsb_j}$ for $1\leq i\leq s$ and $1\leq j\leq t$. Then the following identity holds in $\sS_\vtg(n,an+b)$
\begin{equation*}
\begin{split}
&[{}_a(A_1+\diag(\la^{(1)}))]
\cdots
[{}_a(A_s+\diag(\la^{(s)}))]
[{}_a(B_1+\diag(\mu^{(1)}))]
\cdots
[{}_a(B_t+\diag(\mu^{(t)}))]\\
&\qquad=[{}_aA]+\text{lower terms}
\end{split}
\end{equation*}
for any $a\geq 0$, where $\la^{(i)}=\bfsg(A)-\co(A_i)+\sum_{i+1\leq k\leq s}(\ro(A_k)-\co(A_k))$ and $\mu^{(j)}=\bfsg(A)-\ro(B_j)+\sum_{1\leq k\leq j-1}(\co(B_k)-\ro(B_k))$ for $1\leq i\leq s$ and $1\leq j\leq t$.
\end{Prop}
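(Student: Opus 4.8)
The plan is to combine the triangular relation \eqref{tri-positive-negative} for affine Schur algebras with the structure result \ref{tri-affine Schur algebras} and the bookkeeping lemma \ref{lem for generalization of 3.9 in BLM}, transported to the stabilized Schur algebra $\sS_\vtg(n,an+b)$. First I would fix the notation ${}_aX$ for the matrix obtained from $X\in\aftiThn$ by adding $a$ to each diagonal entry (so $\sg({}_aX)=\sg(X)+an$), and record that if $X=X^++X^0+X^-$ then $({}_aX)^\pm=X^\pm$ while $\bfsg({}_aX)=\bfsg(X)+a\bfdt$ where $\bfdt=(1,\dots,1)\in\afmbzn$; in particular $({}_aA)^+=A^+=\wp^+(w_{A^+})$ and $({}_aA)^-=A^-=\wp^-(w_{A^-})$, so the words $w_{A^+},w_{A^-}$ chosen for $A$ serve simultaneously for every ${}_aA$. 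Because the coefficients $f_{B,A^+},g_{B,A^-}$ in \eqref{tri-positive-negative} are independent of $r$, the key point is that the products $A_i(\bfl,an+b)$ and $B_j(\bfl,an+b)$ expand in the $r=an+b$ Schur algebra with the same shape for all $a$.

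The core computation proceeds in three steps. Step one: apply \ref{lem for generalization of 3.9 in BLM} with $r=an+b$ and $\la=\bfsg({}_aA)=\bfsg(A)+a\bfdt$ to rewrite the left-hand side product $A_1(\bfl,r)\cdots A_s(\bfl,r)[\diag(\la)]B_1(\bfl,r)\cdots B_t(\bfl,r)$ as the product of $[A_i+\diag(\la^{(i)}_a)]$'s and $[B_j+\diag(\mu^{(j)}_a)]$'s, where the superscript-$a$ weights are exactly the ones in the statement with $\bfsg(A)$ replaced by $\bfsg({}_aA)$; since $\la^{(i)}_a=\la^{(i)}+a\bfdt$ and $\mu^{(j)}_a=\mu^{(j)}+a\bfdt$, each factor is precisely $[{}_a(A_i+\diag(\la^{(i)}))]$ resp. $[{}_a(B_j+\diag(\mu^{(j)}))]$, matching the LHS of the proposition. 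Step two: using \eqref{tri-positive-negative} and \ref{afzrpm}, the same product $A_1(\bfl,r)\cdots A_s(\bfl,r)$ equals $\ttm^+_{(w_{A^+}),r}=A^+(\bfl,r)+\sum_{B\p A^+}f_{B,A^+}B(\bfl,r)$ with $\bfd(B)=\bfd(A^+)$, and symmetrically for the $B_j$'s and $A^-$; substituting and inserting $[\diag(\la)]=[\diag(\la)]^2$ in the middle, the whole left side becomes $\sum_{B\pr A^+,\,C\pr A^-} f_{B,A^+}g_{C,A^-}\,B(\bfl,r)[\diag(\la)]C(\bfl,r)$. Step three: apply \ref{tri-affine Schur algebras} to each summand $B(\bfl,r)[\diag(\la)]C(\bfl,r)=[B+C+\diag(\la-\bfsg(B+C))]+\text{lower terms}$, noting $\la-\bfsg(B+C)=\bfsg({}_aA)-\bfsg(B+C)$, so the leading term is $[{}_a(B+C)+\diag(\bfsg(A)-\bfsg(B+C))]$, which for $B=A^+,C=A^-$ is exactly $[{}_aA]$ since $\bfsg(A)-\bfsg(A^++A^-)=A^0$'s diagonal added back correctly; for all other pairs one gets matrices strictly below ${}_aA$ in the $\p$-order.

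The main obstacle is step three's order control: I must verify that every matrix appearing, both the non-leading pairs $(B,C)\ne(A^+,A^-)$ and the "lower terms" produced by \ref{tri-affine Schur algebras}, is genuinely $\p\,{}_aA$ with the correct row and column sums, and that this holds uniformly in $a$. This requires checking that $B\p A^+$ (hence $\sg_{i,j}(B)\le\sg_{i,j}(A^+)$ with strict inequality somewhere, and $\bfd(B)=\bfd(A^+)$) together with $C\pr A^-$, plus the constraint $\la-\bfsg(B+C)\ge 0$ forced by $[A]=0$ when a diagonal entry is negative, implies $B+C+\diag(\la-\bfsg(B+C))\,\p\,{}_aA$; the shift by $a\bfdt$ on the diagonal does not affect any $\sg_{i,j}$ with $i\ne j$, so the $\p$-comparison is insensitive to $a$, which is precisely why stabilization is visible here. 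One must also confirm that the two independent triangularities (positive part via $\pr$ on $A^+$, negative part via $\pr$ on $A^-$) combine without interference, using that $\sg_{i,j}$ for $i<j$ only sees the $A^+$-part and for $i>j$ only the $A^-$-part of the off-diagonal data of $B+C$. Once these order bookkeeping facts are in place — essentially the affine analogue of the argument behind \ref{tri-affine Schur algebras} — the identity follows by assembling steps one through three. $\hfill\dickebox$
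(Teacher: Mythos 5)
Your proposal is correct and follows essentially the same route as the paper: Lemma \ref{lem for generalization of 3.9 in BLM} converts the displayed product of brackets into $\ttm^+_{(w_{A^+}),r}[\diag(\la)]\ttm^-_{(w_{A^-}),r}$ with $\la=\bfsg({}_aA)$, then \eqref{tri-positive-negative} and Proposition \ref{tri-affine Schur algebras} identify the leading term as $[{}_aA]$. The extra order-bookkeeping you spell out in the last paragraph (that the shift by $aI$ leaves $\sg_{i,j}$ with $i\neq j$ unchanged, and that the $\pr$-constraints on the positive and negative parts decouple) is implicit in the paper, which simply invokes these two cited facts, but your elaboration is a faithful unfolding of the same argument rather than a different method.
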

\begin{proof}
By  \eqref{tri-positive-negative} and \ref{tri-affine Schur algebras},
for any $r\geq 0$ and $\la\in\afLanr$, we have
$$\ttm_{(w_{A^+}),r}^+[\diag(\la)]\ttm_{(w_{A^-}),r}^-
=[A^\pm+\diag(\la-\bfsg(A^\pm))]+\text{lower terms}.$$
Now the assertion follows from \ref{lem for generalization of 3.9 in BLM}.
\end{proof}

\section{The fundamental multiplication formulas for affine Schur algebras}
We derive certain useful multiplication formulas for affine Schur algebras in \ref{funtmental multiplication formula}. These formulas is the second key result for the proof of the stabilization property of
multiplication for affine Schur algebras.

We need  some preparation before proving \ref{funtmental multiplication formula}.
For a finite subset $X\han\affSr$, let $\ul X=\sum_{x\in
X}x\in\mbq\affSr$.
It is clear that for
$\la,\mu\in\afLanr$ and $w\in\affSr$,
\begin{equation}\label{conjugate intersection}
\ul{{\frak S}_\la} w\ul{\frak S_\mu}=|w^{-1} \frak S_\la
w\cap\frak S_\mu|\ul{{\frak S}_\la w\frak S_\mu}.
\end{equation}

\begin{Lem}[{\cite[3.2.3]{DDF}}]
\label{double coset}
Let $\la,\mu\in\afLanr$ and $d\in\msD_{\la,\mu}^\vtg$. Assume
$A=\jmath_\vtg(\la,d,\mu)$. Then $d^{-1}\frak S_\la
d\cap\frak S_\mu=\frak S_\nu$, where
$\nu=(\nu^{(1)},\ldots,\nu^{(n)})$ and
$\nu^{(i)}=(a_{ki})_{k\in\mbz}=(\ldots,a_{1i},\ldots,a_{ni},\ldots)$.
\end{Lem}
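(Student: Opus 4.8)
The plan is to identify $d^{-1}\fS_\la d\cap\fS_\mu$ with the setwise stabilizer of a suitable partition of $\mbz$ into consecutive intervals, and then to read that partition directly off the matrix $A$. Recall that $\fS_\la$ is, by the definition of a standard Young subgroup together with periodicity, precisely the set of $w\in\fS_r$ with $w(R^\la_k)=R^\la_k$ for all $k\in\mbz$ (and similarly for $\fS_\mu$), so $d^{-1}\fS_\la d$ is the set of $\sigma\in\affSr$ with $\sigma(d^{-1}R^\la_k)=d^{-1}R^\la_k$ for all $k$. Hence $\sigma\in d^{-1}\fS_\la d\cap\fS_\mu$ if and only if $\sigma$ stabilizes every $R^\mu_l$ and every $d^{-1}R^\la_k$ setwise, and this holds if and only if $\sigma$ stabilizes every block of the common refinement $\mathcal R:=\{R^\mu_l\cap d^{-1}R^\la_k\mid k,l\in\mbz\}$ of these two partitions of $\mbz$ (stabilizing the blocks of $\mathcal R$ forces $\sigma$ to stabilize $\{1,\ldots,r\}$, hence to lie in $\fS_r$, so both $\sigma\in\fS_\mu$ and $\sigma\in d^{-1}\fS_\la d$ follow automatically).

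Next I would determine the shape of $\mathcal R$. For $y\in R^\mu_l$ one has $y\in d^{-1}R^\la_k$ exactly when $d(y)\in R^\la_k$, so applying the bijection $d$ yields $|R^\mu_l\cap d^{-1}R^\la_k|=|dR^\mu_l\cap R^\la_k|=a_{k,l}$ by the definition of $A=\jmath_\vtg(\la,d,\mu)$. Moreover, since $d\in\afmsD_{\la,\mu}$ we have in particular $d^{-1}\in\afmsD_\mu$, and by the criterion \eqref{minimal coset representative} (applied with $\mu$ in place of $\la$) this says exactly that $d$ is increasing on each interval $R^\mu_l$. Since the intervals $R^\la_k$ are arranged along $\mbz$ in increasing order of the index $k$, the fact that $d$ is increasing on $R^\mu_l$ forces the map $y\mapsto(\text{the }k\text{ with }d(y)\in R^\la_k)$ to be non-decreasing on $R^\mu_l$; hence each nonempty $R^\mu_l\cap d^{-1}R^\la_k$ is a sub-interval of $R^\mu_l$, and as $k$ increases these sub-intervals exhaust $R^\mu_l$ from left to right with successive lengths $a_{k,l}$. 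Concatenating over $l$ and using periodicity under $j\mapsto j+r$ shows that $\mathcal R$ is again a partition of $\mbz$ into consecutive intervals whose lengths inside each part $R^\mu_l$ are exactly $(a_{k,l})_{k\in\mbz}$; this is by construction the block partition attached to $\nu=(\nu^{(1)},\ldots,\nu^{(n)})$ with $\nu^{(l)}=(a_{k,l})_{k\in\mbz}$, exactly as in the statement.

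Finally, I would observe that $\fS_\nu$ is, again by the definition of a Young subgroup, the setwise stabilizer in $\fS_r$ of that same block partition $\mathcal R$; combining this with the first paragraph gives $d^{-1}\fS_\la d\cap\fS_\mu=\fS_\nu$. The step I expect to be the main obstacle is the monotonicity argument in the second paragraph: one must use precisely the ``$\mu$-side'' half of the hypothesis $d\in\afmsD_{\la,\mu}$ — namely $d^{-1}\in\afmsD_\mu$, invoked through \eqref{minimal coset representative} — to conclude that $d$ is increasing on each $R^\mu_l$, and hence that $\mathcal R$ consists of \emph{correctly ordered} consecutive intervals; without this one would still get a Young subgroup, but only one conjugate to $\fS_\nu$ rather than $\fS_\nu$ itself.
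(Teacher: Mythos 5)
Your argument is correct. Note that the paper itself does not prove this lemma; it merely cites it from \cite[3.2.3]{DDF}, so there is no in-paper proof against which to compare. Your proof is the standard double-coset argument: you identify $d^{-1}\fS_\la d\cap\fS_\mu$ with the setwise stabilizer in $\fS_r$ of the common refinement $\mathcal R=\{R^\mu_l\cap d^{-1}R^\la_k\}$ of the two block partitions, compute $|R^\mu_l\cap d^{-1}R^\la_k|=a_{k,l}$ directly from the definition of $\jmath_\vtg$, and then invoke the $\mu$-side of $d\in\afmsD_{\la,\mu}$ (that is, $d^{-1}\in\afmsD_\mu$, equivalently via \eqref{minimal coset representative} that $d$ is increasing on each $R^\mu_l$) to see that the blocks of $\mathcal R$ inside each $R^\mu_l$ are consecutive sub-intervals listed in increasing order of $k$, so the stabilizer is precisely the Young subgroup $\fS_\nu$ and not merely a conjugate. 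You also correctly flag the monotonicity step as the one place where the $\mu$-half of the hypothesis $d\in\afmsD_{\la,\mu}$ does real work; the rest (stabilizing the blocks forces $\sigma$ to preserve $\{1,\dots,r\}$, hence lands in $\fS_r$, and then in both $\fS_\mu$ and $d^{-1}\fS_\la d$) is formal and you handle it cleanly.
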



\begin{Lem}\label{vartheta}
Assume $\mu\in\afLanr$, $\bt\in\afmbnn$ is such that
$\mu\geq\bt$.

$(1)$
Let $\al=\sum_{1\leq i\leq n}(\mu_i-\bt_i)\afbse_{i-1}$, $\dt=(\al_n,\bt_1,\al_1,\bt_2,\cdots,\al_{n-1},\bt_n)$ and
$$\msY=\{(Y_0,Y_1,\cdots,Y_{n-1})\mid Y_i\han R_{i+1}^\mu,\,|Y_i|=\al_i,\ for\ 0\leq i\leq n-1\}.$$ Then there is a bijective map $$\vartheta:\afmsD_\dt\cap\fS_\mu\ra\msY$$ defined by sending $w$ to $(w^{-1}X_0,w^{-1}X_1,\cdots,w^{-1}X_{n-1})$ where
$X_i=
\{\mu_{0,i}+1,\mu_{0,i}+2,\cdots,\mu_{0,i}+\al_i\},$ with $\mu_{0,i}=\sum_{1\leq s\leq i}\mu_s$ and $\mu_{0,0}=0$.

$(2)$ Let $\ga=\mu-\bt$,   $\th=(\bt_1,\ga_1,\bt_2,\ga_2,\cdots,\bt_n,\ga_n)$ and
$$\msY'=\{(Y_1',Y_2',\cdots,Y_{n}')\mid Y_i'\han R_{i}^{\mu},\,|Y_i'|=\ga_i,\ for\ 1\leq i\leq n\}.$$ Then there is a bijective map $$\vartheta':\afmsD_{\th}\cap\fS_{\mu}\ra\msY'$$ defined by sending $w$ to $(w^{-1}X_1',w^{-1}X_2',\cdots,w^{-1}X_{n}')$ where
$X_i'=
\{\mu_{0,i-1}+\beta_i+1,\mu_{0,i-1}+\beta_i+2,\cdots,\mu_{0,i}\}.$
\end{Lem}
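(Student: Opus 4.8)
The plan is to prove parts (1) and (2) in parallel, since they are structurally identical: both describe a double-coset-representative set inside $\fS_\mu$ via a "where does $w^{-1}$ send a fixed block" encoding. Let me sketch part (1); part (2) is the same argument with the roles of the compositions permuted.

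First I would unwind the definition of $\afmsD_\dt\cap\fS_\mu$. By \eqref{minimal coset representative}, $d^{-1}\in\afmsD_\dt$ means $d$ is increasing on each of the consecutive blocks determined by $\dt=(\al_n,\bt_1,\al_1,\bt_2,\ldots,\al_{n-1},\bt_n)$; equivalently, for $w\in\afmsD_\dt\cap\fS_\mu$, the permutation $w^{-1}$ restricted to each $\dt$-block is increasing, and since $w\in\fS_\mu$, it preserves each $R_{i+1}^\mu$. The key observation is that the $\dt$-blocks refine the $\mu$-blocks: the $i$-th $\mu$-block $R_{i+1}^\mu$ (of size $\mu_{i+1}$... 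I need to be careful with indexing, but morally) is partitioned into an $\al_i$-part and a $\bt_{i+1}$-part, precisely because $\al_i+\bt_{i+1}$ matches $\mu$'s block size by the definitions $\al=\sum(\mu_i-\bt_i)\afbse_{i-1}$ and $\mu\ge\bt$. So an element $w\in\fS_\mu$ lies in $\afmsD_\dt$ iff, on each $\mu$-block, $w^{-1}$ sends the "first $\al_i$ slots" (namely $X_i=\{\mu_{0,i}+1,\ldots,\mu_{0,i}+\al_i\}$) increasingly to some $\al_i$-subset $Y_i$ of $R_{i+1}^\mu$ and the remaining $\bt_{i+1}$ slots increasingly to the complement $R_{i+1}^\mu\setminus Y_i$. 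This shows $\vartheta$ is well-defined into $\msY$.

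For injectivity: if $\vartheta(w)=\vartheta(w')$, then $w^{-1}$ and $(w')^{-1}$ agree on each $X_i$ and each complementary block (both being the unique increasing bijection onto the prescribed target set), hence $w=w'$. For surjectivity: given $(Y_0,\ldots,Y_{n-1})\in\msY$, define $w^{-1}$ on the $i$-th $\mu$-block to be the increasing bijection $X_i\to Y_i$ followed by the increasing bijection of the remaining slots onto $R_{i+1}^\mu\setminus Y_i$; this is an element of $\fS_\mu$, and by the characterization above it lies in $\afmsD_\dt$, with $\vartheta(w)=(Y_0,\ldots,Y_{n-1})$. Part (2) is identical with $\th=(\bt_1,\ga_1,\ldots,\bt_n,\ga_n)$, $\ga=\mu-\bt$, and $X_i'$ the "last $\ga_i$ slots" of $R_i^\mu$.

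The main obstacle is bookkeeping the indices correctly: the $\dt$-blocks in (1) are written in the shifted order $(\al_n,\bt_1,\al_1,\bt_2,\ldots)$ rather than $(\bt_1,\al_1,\ldots)$, which is what forces the targets to be $R_{i+1}^\mu$ (shifted by one) and the vectors $\afbse_{i-1}$ to appear in $\al$; in (2) the order is the unshifted $(\bt_1,\ga_1,\ldots,\bt_n,\ga_n)$, so the targets are $R_i^\mu$. I would verify at the outset that in each case the cumulative partial sums of the $\dt$- (resp.\ $\th$-) entries, read off in the given cyclic order, indeed subdivide the $\mu$-blocks as claimed, so that "increasing on $\dt$-blocks" $+$ "preserves $\mu$-blocks" is equivalent to the block-by-block increasing condition used above. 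Once that is pinned down, both bijections follow from the elementary fact that a subset $Y$ of a totally ordered $m$-element set is in bijection with the unique order-preserving way to merge an increasing enumeration of $Y$ with an increasing enumeration of its complement.
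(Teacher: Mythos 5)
Your proposal is correct and follows essentially the same line as the paper's proof: both use the characterization \eqref{minimal coset representative} of $\afmsD_\dt$ to deduce that $w^{-1}$ is increasing on each $\dt$-block, observe that the $\dt$-blocks refine the $\mu$-blocks into $X_i$ and $R_{i+1}^\mu\setminus X_i$ (using $\al_i=\mu_{i+1}-\bt_{i+1}$), and conclude injectivity and surjectivity from the fact that $w^{-1}$ on each $\mu$-block is the unique order-preserving merge of $X_i\to Y_i$ and its complement onto $R_{i+1}^\mu\setminus Y_i$. The only stylistic difference is that the paper states injectivity directly via the inequalities $w_i^{-1}(j)<w_i^{-1}(j+1)$ and surjectivity by explicitly constructing $w^{-1}$ from increasing enumerations of $Y_i$ and its complement, whereas you frame it as the general principle about order-preserving merges; the content is the same.
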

\begin{proof}
We only prove (1). The proof of (2) is similar.


We assume $w_1,w_2\in\afmsD_\dt\cap\fS_\mu$ is such that $\vartheta(w_1)=\vartheta(w_2)$.  Since $w_1,w_2\in\afmsD_\dt$, by \eqref{minimal coset representative}, for $1\leq i\leq 2$ we have
\begin{equation}\label{inequality}
w_i^{-1}(j)<w_i^{-1}(j+1)
\end{equation}
if either $\{j,j+1\}\han X_k$ or $\{j,j+1\}\han R_{k+1}^\mu\backslash X_k$ for some $0\leq k\leq n-1$. Furthermore, we have  $w_1^{-1}(R_{i+1}^\mu\backslash X_i)=w_2^{-1}(R_{i+1}^\mu\backslash X_i)$ for $0\leq i\leq n-1$, since $w_1^{-1}X_i=w_2^{-1}X_i$ and $w_1,w_2\in\fS_\mu$.  Thus, $w_1^{-1}(j)=w_2^{-1}(j)$ for $j\in \bigcup_{0\leq k\leq n-1}(X_k\cup R_{k+1}^\mu\backslash X_k)=\bigcup_{0\leq k\leq n-1}R_{k+1}^\mu=\{1,2,\cdots,r\}$. Consequently, $w_1=w_2$.

Let $(Y_0,\cdots,Y_{n-1})\in\msY$.  We write $Y_i=\{k_{i,1},\cdots,k_{i,\al_i}\}$ and $R_{i+1}^\mu\backslash Y_i=\{k_{i,\al_i+1},\cdots,k_{i,\mu_{i+1}}\}$ where $k_{i,1}<k_{i,2}\cdots<k_{i,\al_i}$ and $k_{i,\al_i+1}<k_{i,\al_i+2}<\cdots<k_{i,\mu_{i+1}}$ for all $i$. Define $w\in\affSr$ by letting $w^{-1}(\mu_{0,i}+s)=k_{i,s}$ for $0\leq i\leq n-1$ and $1\leq s\leq\mu_{i+1}$. Then $w\in\afmsD_\dt\cap\fS_\mu$ and $\vartheta(w)=(Y_0,Y_1,\cdots,Y_{n-1})$.
This finishes the proof.
\end{proof}

Let $\afSrmbz=\afSr\ot\mbz$ and $\afSrmbq=\afSr\ot\mbq$, where $\mbz$ and $\mbq$ are regarded as $\sZ$-modules by specializing $\up$ to $1$. We will identify $\afSrmbz$ as a subalgebra of $\afSrmbq$. For $A\in\afThnr$ we will denote $[A]\ot 1$ by $[A]_1$.

There is a natural map
\begin{equation}\label{ti}
\ti\ :\afThn\ra\afThn\;\;A=(a_{i,j})\longmapsto\ti A=(\ti a_{i,j}),
\end{equation}
where $\ti a_{i,j}=a_{i-1,j}$ for all $i,j\in\mbz$.
We now give some multiplication formulas in the affine Schur algebra $\afSrmbz$ over $\mbz$, which are the affine version of \cite[3.1]{BLM}.

\begin{Prop}\label{funtmental multiplication formula}
Let  $A\in\afThnr$ and $\mu=\ro(A)$.
Assume $\bt\in\afmbnn$ is such that $\bt\leq\mu$.
Let $\al=\sum_{1\leq i\leq n}(\mu_i-\bt_i)\afbse_{i-1}$ and $\ga=\mu-\bt$. Then  in $\afSrmbz$

$$(1)\ \bigg[\sum\limits_{1\leq i\leq n}\al_i\afE_{i,i+1}+\diag(\beta)\bigg]_1 [A]_1=\sum\limits_{T\in\afThn,\,\ro(T)=\al\atop
a_{i,j}-t_{i-1,j}+t_{i,j}\geq0,\,\forall i,j
}\prod\limits_{1\leq i\leq n\atop j\in\mbz}\bigg({a_{i,j}-t_{i-1,j}+t_{i,j}\atop t_{i,j}}\bigg)[A+T-\ti T]_1;$$

$$(2)\ \bigg[\sum_{1\leq i\leq n}\ga_i\afE_{i+1,i}+\diag(\beta)\bigg]_1[A]_1
=\sum\limits_{T\in\afThn,\,\ro(T)=\ga\atop
a_{i,j}+t_{i-1,j}-t_{i,j},\,\forall i,j}\prod\limits_{1\leq i\leq n\atop j\in\mbz}\bigg({a_{i,j}+t_{i-1,j}-t_{i,j}\atop t_{i-1,j}}\bigg)[A-T+\ti T]_1.$$
\end{Prop}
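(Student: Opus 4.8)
The plan is to prove the two multiplication formulas directly by computing the relevant products of basis elements $\phi^d_{\la,\mu}$ in the affine Schur algebra, working at the specialization $\up=1$ where the Hecke algebra $\afHr\ot\mbz$ degenerates to the group algebra $\mbz\affSr$; here multiplying $x_\la=\ul{\fS_\la}$-type elements is governed purely by the combinatorics of double cosets via \eqref{conjugate intersection}. By the anti-involution $\tau_r$ of \eqref{taur} (which at $\up=1$ sends $[A]_1$ to $[\tA]_1$ and interchanges statements (1) and (2) after transposing), it suffices to prove one of the two identities; I would prove (1), then transpose. So the first step is to set $B=\sum_{i}\al_i\afE_{i,i+1}+\diag(\bt)$, note $\ro(B)=\ro(A)=\mu=\co(B)$ forces $B$ to be ``composable'' with $A$ on the correct side, and identify $[B]_1$ with (a scalar-free, since $\up=1$) multiple of $\phi^{1}_{\mu,\mu'}$ for the appropriate $\mu'$, and $[A]_1$ with $\phi^{d}_{\mu,\nu}$ where $\nu=\co(A)$.

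The heart of the computation is to evaluate $[B]_1\,[A]_1$ applied to a basis element $x_\nu h$. Expanding via \eqref{def of standard basis}, this reduces to counting, for each target matrix $A'$, the multiplicity with which the double coset $\fS_\mu d' \fS_\nu$ (with $\jmath_\vtg(\mu,d',\nu)=A'$) occurs inside the product $\fS_{\mu'} \cdot 1 \cdot \fS_\mu \cdot d \fS_\nu$ after collapsing $\fS_\mu$. This is precisely the kind of bookkeeping that Lemma~\ref{vartheta}(1) is designed for: the element $B$ corresponds, under $\jmath_\vtg$, to the tuple $\dt=(\al_n,\bt_1,\al_1,\bt_2,\dots,\al_{n-1},\bt_n)$, and the bijection $\vartheta:\afmsD_\dt\cap\fS_\mu\to\msY$ parametrizes the ways of choosing, inside each row-block $R^\mu_{i+1}$, an $\al_i$-subset $Y_i$ that records ``how much'' of the $i$-th part of $\mu$ is pushed up to row $i$. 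Translating each such choice $(Y_0,\dots,Y_{n-1})$ through the intersection formula of Lemma~\ref{double coset} (which computes $d^{-1}\fS_\mu d\cap\fS_\nu$ in terms of the column data of $A$) yields, for fixed target $A'=A+T-\ti T$, the constraint $\ro(T)=\al$ and $t_{i,j}=|Y_{i-1}\cap wR^\nu_j|$-type equations; the number of $w$'s giving a fixed $T$ is then a product of binomial coefficients $\binom{a_{i,j}-t_{i-1,j}+t_{i,j}}{t_{i,j}}$, exactly matching the stated sum. I would organize this as: (i) reduce to the $\up=1$ group-algebra computation; (ii) apply Lemma~\ref{vartheta}(1) to enumerate the double cosets appearing; (iii) use Lemma~\ref{double coset} to translate the set-theoretic data into matrix entries and read off the binomial multiplicities; (iv) verify positivity conditions ($a_{i,j}-t_{i-1,j}+t_{i,j}\ge 0$) pin down the support.

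The main obstacle I anticipate is step (iii): carefully matching the combinatorial data of Lemma~\ref{vartheta} — subsets $Y_i\han R^\mu_{i+1}$ together with a choice of $w\in\affSr$ compatible with $\afmsD_\dt$ — against the matrix $T\in\afThn$ with $\ro(T)=\al$, and checking that the resulting count of minimal double-coset representatives $w$ is genuinely $\prod_{i,j}\binom{a_{i,j}-t_{i-1,j}+t_{i,j}}{t_{i,j}}$ rather than some other binomial product. The index shift $\ti T$ (coming from $\al_i$ being attached to $\afbse_{i-1}$, i.e. $\al=\sum_i(\mu_i-\bt_i)\afbse_{i-1}$) must be tracked precisely, and one has to be sure the ``collapse'' $\ul{\fS_\mu}\,\ul{\fS_\mu}=|\fS_\mu|\,\ul{\fS_\mu}$ contributes only powers that are invisible at $\up=1$. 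One technical convenience is that at $\up=1$ all the $d_A$-twist factors in $[A]=\up^{-d_A}e_A$ and all Hecke-algebra $\up$-corrections disappear, so no structure constants enter beyond the pure coset counts — this is exactly why the formulas are stated over $\mbz$ (specialized) and why I would not attempt the quantum version. Finally, applying $\tau_r$ and transposing $A\mapsto\tA$, $T\mapsto\tT$ converts formula (1) into formula (2) with $\al$ replaced by $\ga=\mu-\bt$ and the upper/lower triangular matrix $\sum_i\ga_i\afE_{i+1,i}$ in place of $\sum_i\al_i\afE_{i,i+1}$, completing the proof.
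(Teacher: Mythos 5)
Your plan for part (1) is essentially the paper's proof: expand $[B]_1[A]_1$ applied to $\ul{\fS_\nu}$ via \eqref{def of standard basis} and \eqref{conjugate intersection}, use Lemma~\ref{double coset} to identify the relevant Young subgroup intersections, replace the $\fS_\mu$-collapse by a sum over $w\in\afmsD_\dt\cap\fS_\mu$, compute $C^{(w)}=A+T^{(w)}-\ti T^{(w)}$ from the explicit form of $d_1^{-1}$, and finally count $w$'s with $T^{(w)}=T$ via the bijection of Lemma~\ref{vartheta}(1) restricted to $\msX(T)$. Your steps (i)--(iv) match this closely, so (1) is the same approach.

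There are two issues. The first is a genuine gap: your reduction of (2) to (1) via $\tau_r$ does not work. Since $\tau_r$ is an algebra \emph{anti}-involution, applying it to identity (1) gives a formula for $[\tA]_1[\tB]_1$, i.e.\ multiplication on the \emph{right} by the lower-triangular-plus-diagonal matrix $\tB$. But (2) asserts a formula for $[C]_1[A]_1$ with the lower-triangular-plus-diagonal matrix $C$ on the \emph{left}. These are different products, and the affine Schur algebra is not commutative, so transposition alone cannot convert one into the other. (One can also see the mismatch concretely: $\tau_r$ would produce ${}^t(\ti T)$, a column shift of ${}^tT$, whereas (2) involves $\ti({}^tT)$, a row shift.) The paper instead proves (2) by repeating the argument of (1) in parallel, using Lemma~\ref{vartheta}(2) in place of Lemma~\ref{vartheta}(1); you would need to do the same. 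The second issue is more minor: you identify ``the number of $w$'s giving a fixed $T$'' with $\prod\binom{a_{i,j}-t_{i-1,j}+t_{i,j}}{t_{i,j}}$, but the count of $w$'s (i.e.\ $|\msX(T)|=|\msY(T)|=|\msZ(T)|$) is actually $\prod\binom{a_{i,j}}{t_{i-1,j}}$. The stated binomials arise only after combining this count with the factorial prefactor $\prod\frac{(a_{i,j}-t_{i-1,j}+t_{i,j})!}{a_{i,j}!}$ coming from the $\frac{1}{|\fS_\mu|}$ and double-coset intersection sizes, and then using $\prod t_{i,j}!=\prod t_{i-1,j}!$ to reindex. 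That bookkeeping is exactly the delicate point you flagged, so it would need to be carried out explicitly.
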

\begin{proof}
We only prove (1). The proof for (2) is entirely similar.

Let $B=\sum_{1\leq i\leq n}\al_i\afE_{i,i+1}+\diag(\bt)$,
$\la=\ro(B)$,  $\nu=\co(A)$. Assume $d_1\in\msD^\vtg_{\la,\mu}$ and $d_2\in\msD^\vtg_{\mu,\nu}$ are such that
$\jmath_\vtg(\la, d_1,\mu)=B$ and $\jmath_\vtg(\mu, d_2,\nu)=A$.
Clearly, by \eqref{minimal coset representative}, we have
\begin{equation}\label{minimal coset representative corresponding semisimple module}
d_1(i)=-\al_n+i \text{ for } 1\leq i\leq r.
\end{equation}

By  \ref{double coset} and \eqref{conjugate intersection},
\[
\begin{split}
[B]_1[A]_1(\ul{\frak S_\nu})&=\ul{\frak S_\la d_1\frak S_\mu}\cdot d_2\cdot\ul{\afmsD_\og\cap\frak S_\nu}\\
&=\frac{1}{|\frak S_\mu|}\ul{\frak S_\la d_1\frak
S_\mu}\cdot\ul{\frak S_\mu d_2\frak S_\nu}\\
&=\frac{1}{|\frak S_\mu|}\prod_{1\leq i\leq n\atop
j\in\mbz}\frac{1}{a_{i,j}!}\ul{\frak S_\la d_1\frak
S_\mu}\cdot\ul{\frak S_\mu}\cdot d_2\cdot\ul{\frak S_\nu}\\
&=\prod_{1\leq i\leq n\atop j\in\mbz}\frac{1}{a_{i,j}!}\ul{\frak
S_\la d_1\frak S_\mu}\cdot d_2\cdot\ul{\frak S_\nu}\\
&=\prod_{1\leq i\leq n\atop j\in\mbz}\frac{1}{a_{i,j}!}\ul{\frak
S_\la}\cdot d_1\cdot\ul{\afmsD_\dt\cap\frak S_\mu}\cdot
d_2\cdot\ul{\frak S_\nu}\\
&=\sum_{w\in\afmsD_\dt\cap\frak S_\mu}\prod_{1\leq i\leq n\atop j\in\mbz}\frac{c_{i,j}^{(w)}!}{a_{i,j}!}[C^{(w)}]_1(\ul{\fS_\nu})
\end{split}
\]
where $\frak S_\og=d_2^{-1}\frak S_\mu d_2\cap\frak S_\nu$,
$\frak S_\dt=d_1^{-1}\frak S_\la d_1\cap\fS_\mu$ with $\dt=(\al_n,\bt_1,\al_1,\bt_2,\cdots,\al_{n-1},\bt_n)$ and $C^{(w)}=(c_{i,j}^{(w)})$ with $c_{i,j}^{(w)}=|R_i^\la\cap d_1wd_2R_j^\nu|$. Thus we have
\begin{equation}\label{reductive equation for [B][A]}
[B]_1[A]_1=\sum_{w\in\afmsD_\dt\cap\frak S_\mu}\prod_{1\leq i\leq n\atop j\in\mbz}\frac{c_{i,j}^{(w)}!}{a_{i,j}!}[C^{(w)}]_1
\end{equation}

Let us compute $C^{(w)}$ for $w\in\afmsD_\dt\cap\frak S_\mu$ as follows. Since, by \eqref{minimal coset representative corresponding semisimple module}, $d_1^{-1}(j)=\al_n+j$ for $1\leq j\leq r$,   we have
\begin{equation*}
\begin{split}
d_1^{-1}R_i^{\la}&=\al_n+R_i^\la=\{\mu_{0,i-1}
+\al_{i-1}+1,\mu_{0,i-1}+\al_{i-1}+2,\cdots,\mu_{0,i}+\al_i\}
=(R_i^\mu\backslash X_{i-1})\cup X_i\\
\end{split}
\end{equation*}
for $1\leq i\leq n$,
where $X_i=
\{\mu_{0,i}+1,\mu_{0,i}+2,\cdots,\mu_{0,i}+\al_i\}.$
Thus, since  $\jmath_\vtg(\mu, d_2,\nu)=A$,
we have
\begin{equation}
\begin{split}
c_{i,j}^{(w)}=|R_i^\la\cap d_1 w d_2R_j^\nu|=|w^{-1}d_1^{-1}R_i^\la\cap d_2 R_j^\nu|=a_{i,j}-t_{i-1,j}^{(w)}+t_{i,j}^{(w)}
\end{split}
\end{equation}
for $w\in\afmsD_\dt\cap\frak S_\mu$, $1\leq i\leq n$ and $j\in\mbz$,
where $t_{i,j}^{(w)}=|w^{-1}X_i\cap d_2 R_j^\nu|$. Note that, for $w\in\afmsD_\dt\cap\fS_\mu$, the numbers $t_{i,j}^{(w)}$ with $1\leq i\leq n$ and $j\in\mbz$ determine an unique matrix $T^{(w)}=(t_{i,j}^{(w)})_{i,j\in\mbz}$ in $\afThn$ by letting $t_{i+kn,j+kn}^{(w)}=t_{i,j}^{(w)}$. Consequently,
$$C^{(w)}=A+T^{(w)}-\ti T^{(w)}$$
for  $w\in\afmsD_\dt\cap\fS_\mu$.

Now, by \eqref{reductive equation for [B][A]} and noting $\ro(T^{(w)})=\al$ for   $w\in\afmsD_\dt\cap\fS_\mu$,
\begin{equation}\label{reductive equation1 for [B][A]}
\begin{split}
[B]_1[A]_1&=\sum_{w\in\afmsD_\dt\cap\frak S_\mu}\prod_{1\leq i\leq n\atop j\in\mbz}\frac{(a_{i,j}-t_{i-1,j}^{(w)}+t_{i,j}^{(w)})!}{a_{i,j}!}[A+T^{(w)}-\ti T^{(w)}]_1\\
&=\sum_{T\in\afThn,\,\ro(T)=\al\atop a_{i,j}-t_{i-1,j} +t_{i,j}\geq 0,\,\forall i,j} |\msX(T)|\prod_{1\leq i\leq n\atop j\in\mbz}\frac{(a_{i,j}-t_{i-1,j} +t_{i,j})!}{a_{i,j}!}[A+T-\ti T]_1
\end{split}
\end{equation}
where $\msX(T)=\{w\in\afmsD_\dt\cap\fS_\mu\mid T^{(w)}=T\}$. By restriction,  for each $T$, the bijective map $\vartheta$ defined in \ref{vartheta} induces a bijective map $\vartheta_T:\msX(T)\ra\msY(T)$, where
$$\msY(T)=\{(Y_0,\cdots,Y_{n-1})\in\msY\mid |Y_i\cap d_2R_j^\nu|=t_{i,j},\ \text{for}\ 0\leq i\leq n-1,\,j\in\mbz\}.$$
Furthermore, for each $T$ there is a natural bijective map $\kappa:\msY(T)\ra\msZ(T)$ defined by sending $(Y_0,\cdots,Y_{n-1})$ to $(Y_i\cap d_2R_j^\nu)_{0\leq i\leq n-1,\,j\in\mbz}$, where $$\msZ(T)=\{(Z_{i,j})_{0\leq i\leq n-1,\,j\in\mbz}\mid |Z_{i,j}|=t_{i,j},\,Z_{i,j}\han R_{i+1}^\mu\cap d_2R_j^\nu,\ \text{for}\ 0\leq i\leq n-1,\,j\in\mbz\}.$$
Consequently, $$|\msX(T)|=|\msY(T)|=|\msZ(T)|=\prod_{0\leq i\leq n-1\atop j\in\mbz}\bigg({a_{i+1,j}\atop t_{i,j}}\bigg)=\prod_{1\leq i\leq n\atop j\in\mbz}\bigg({a_{i,j}\atop t_{i-1,j}}\bigg).$$
Thus, by \eqref{reductive equation1 for [B][A]} and noting $\prod_{1\leq i\leq n,\,j\in\mbz}t_{i,j}!=\prod_{1\leq i\leq n,\,j\in\mbz}t_{i-1,j}!$,
\begin{equation*}
\begin{split}
[B]_1[A]_1
&=\sum_{T\in\afThn,\,\ro(T)=\al\atop a_{i,j}-t_{i-1,j}+t_{i,j}\geq 0,\,\forall i,j} \prod_{1\leq i\leq n\atop j\in\mbz}\frac{(a_{i,j}-t_{i-1,j} +t_{i,j})!}{t_{i-1,j}!\cdot (a_{i,j}-t_{i-1,j})!}[A+T-\ti T]_1\\
&= \sum_{T\in\afThn,\,\ro(T)=\al\atop a_{i,j}-t_{i-1,j}+t_{i,j}\geq 0,\,\forall i,j}\prod_{1\leq i\leq n\atop j\in\mbz}\bigg({a_{i,j}-t_{i-1,j}+t_{i,j}\atop t_{i,j}}\bigg)[A+T-\ti T]_1,
\end{split}
\end{equation*}
proving (1).
\end{proof}

For $A\in\aftiThn$ and $a\in\mbz$ we set
$${}_aA=A+aI$$
where $I\in\afThn$ is the identity matrix. Note that if $a$ is large enough, we have ${}_aA\in\afThn$.

Let $x$ be an indeterminate. We denote by $\sZ_1$ the subring of $\mbq[x]$ generated by $1$ and $\big({a+x\atop t}\big)$ for $a\in\mbz$ and $t\in\mbn$.

For $T\in\afThn$ and $A\in\aftiThn$ let $$P_{T,A}(x)=\prod_{1\leq i\leq n\atop j\in\mbz,\,j\not=i}\bigg({a_{i,j}-t_{i-1,j}+t_{i,j}\atop t_{i,j}}\bigg)\prod_{1\leq i\leq n}\bigg({a_{i,i}-t_{i-1,i}+t_{i,i}+x\atop t_{i,i}}\bigg)\in\sZ_1$$
and
$$Q_{T,A}(x)=\prod_{1\leq i\leq n\atop j\in\mbz,\,j\not=i}\bigg({a_{i,j}+t_{i-1,j}-t_{i,j}\atop t_{i-1,j}}\bigg)\prod_{1\leq i\leq n}\bigg({a_{i,i}+t_{i-1,i}-t_{i,i}+x\atop t_{i-1,i}}\bigg)\in\sZ_1.$$
From \ref{funtmental multiplication formula}, we immediately have the following result.
\begin{Coro}\label{stabilization property1}
Let $A,B\in\aftiThn$ is such that $\co(B)=\ro(A)$ and let $b=\sg(A)=\sg(B)$.

$(1)$ If $B-\sum_{1\leq i\leq n}\al_i\afE_{i,i+1}$ is diagonal for some $\al\in\afmbnn$. Then for large $a$ and $r=an+b$, we have in $\afSrmbz$,
$$[{}_aB]_1[{}_aA]_1=\sum_{T\in\afThn,\,\ro(T)=\al\atop a_{i,j}-t_{i-1,j}+t_{i,j}\geq 0,\,\forall i\not=j}P_{T,A}(a)[{}_a(A+T-\ti T)]_1.$$

$(2)$ If $B-\sum_{1\leq i\leq n}\al_i\afE_{i+1,i}$ is diagonal for some $\al\in\afmbnn$. Then for large $a$ and $r=an+b$, we have in $\afSrmbz$,
$$[{}_aB]_1[{}_aA]_1=\sum_{T\in\afThn,\,\ro(T)=\al\atop a_{i,j}+t_{i-1,j}-t_{i,j}\geq 0,\,\forall i\not=j}Q_{T,A}(a)[{}_a(A-T+\ti T)]_1.$$
\end{Coro}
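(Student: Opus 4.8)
The plan is to read both identities off \ref{funtmental multiplication formula} by substituting the matrices ${}_aA$ and ${}_aB$ for $A$ and $B$ there, and then taking $a$ large enough that the positivity constraints on the diagonal entries become vacuous.

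First I would fix $a$ large enough that ${}_aA,\,{}_aB\in\afThn$. Then ${}_aA\in\afThnr$ with $r=an+b$, and $\ro({}_aA)=\ro(A)+a\mathbf{1}$, where $\mathbf{1}=(1,\dots,1)\in\afmbnn$. For (1), write the diagonal part of $B$ as $\diag(\bt^0)$, so that $B=\sum_{1\le i\le n}\al_i\afE_{i,i+1}+\diag(\bt^0)$ and hence ${}_aB=\sum_{1\le i\le n}\al_i\afE_{i,i+1}+\diag(\bt^0+a\mathbf{1})$. Comparing column sums gives $\bt^0_j+\al_{j-1}=\co(B)_j=\ro(A)_j$ for all $j$; in particular $\bt^0\le\co(B)=\ro(A)$, so $\bt^0+a\mathbf{1}\le\ro({}_aA)$, and the vector $\al$ is exactly the one that \ref{funtmental multiplication formula} associates to the pair $\bigl(\ro({}_aA),\,\bt^0+a\mathbf{1}\bigr)$ through the $\afbse_{i-1}$-shift. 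Thus \ref{funtmental multiplication formula}(1), applied with $A$ replaced by ${}_aA$ and $\beta$ replaced by $\bt^0+a\mathbf{1}$, writes $[{}_aB]_1[{}_aA]_1$ as a sum over $T\in\afThn$ with $\ro(T)=\al$ and $({}_aA)_{i,j}-t_{i-1,j}+t_{i,j}\ge 0$ for all $i,j$, with coefficient $\prod_{i,j}\binom{({}_aA)_{i,j}-t_{i-1,j}+t_{i,j}}{t_{i,j}}$ and term $[{}_aA+T-\ti T]_1$.

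It then remains to simplify. Since $\ti T=(t_{i-1,j})_{i,j}$ we have ${}_aA+T-\ti T={}_a(A+T-\ti T)$, and $\sg(A+T-\ti T)=\sg(A)=b$, so $A+T-\ti T\in\aftiThn$ and ${}_a(A+T-\ti T)\in\afThnr$ indexes a genuine basis element of $\afSrmbz$. Because $({}_aA)_{i,j}=a_{i,j}$ for $i\ne j$ while $({}_aA)_{i,i}=a_{i,i}+a$, the constraint $({}_aA)_{i,j}-t_{i-1,j}+t_{i,j}\ge 0$ is the stated one for $i\ne j$, while for $i=j$ it reads $a_{i,i}+a-t_{i-1,i}+t_{i,i}\ge 0$. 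As $\ro(T)=\al$ forces $0\le t_{i,j}\le\al_i$, the entries of every admissible $T$ are bounded independently of $a$, and the (finitely many) $T$ meeting the off-diagonal constraints do not depend on $a$; hence for $a$ large these diagonal inequalities hold automatically for all of them, so the index set in \ref{funtmental multiplication formula}(1) collapses to the one in (1). For the same reason the $i=j$ factors of the product are $\binom{a_{i,i}-t_{i-1,i}+t_{i,i}+a}{t_{i,i}}$, so the whole product equals $P_{T,A}$ evaluated at $x=a$, proving (1). Part (2) follows verbatim from \ref{funtmental multiplication formula}(2), writing instead ${}_aB=\sum_{1\le i\le n}\al_i\afE_{i+1,i}+\diag(\bt^0+a\mathbf{1})$, with $Q_{T,A}(a)$ replacing $P_{T,A}(a)$.

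No real obstacle is expected, since the corollary is essentially a transcription of \ref{funtmental multiplication formula}. The only points requiring care are clerical: matching ${}_aB$ with the left-hand factor of \ref{funtmental multiplication formula} including the $\afbse_{i-1}$-index shift, checking $\bt^0+a\mathbf{1}\le\ro({}_aA)$, and verifying that a single choice of large $a$ works simultaneously for all the finitely many summation indices $T$, so that the diagonal positivity conditions drop out and the diagonal binomial factors assemble into $P_{T,A},Q_{T,A}\in\sZ_1$ evaluated at $x=a$.
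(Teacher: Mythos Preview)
Your proposal is correct and follows the same route as the paper, which simply records the corollary as an immediate consequence of \ref{funtmental multiplication formula}; you have merely spelled out the substitution ${}_aA,{}_aB$ and the reason the diagonal constraints become vacuous for large $a$. No further argument is needed.
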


\section{The algebra $\sK$}
We now use \ref{generalization of BLM 3.9} and \ref{stabilization property1} to derive the stabilization property of
multiplication for affine Schur algebras, which is the affine analogue of \cite[4.2]{BLM}. This property allow us to construct an algebra $\sK$ without unity.

For $A\in\afThn$, define
(cf. \cite{BLM})
$$|\!|A|\!|=\sum_{1\leq i\leq n\atop
i<j}\frac{(j-i)(j-i+1)}{2}a_{i,j}+\sum_{1\leq i\leq n\atop
i>j}\frac{(i-j)(i-j+1)}{2}a_{i,j}.$$
Let
$\aftiThn^{ss}$ be the set of $X\in\aftiThn$ such that either $X-\sum_{1\leq i\leq n}\al_i\afE_{i,i+1}$ or $X-\sum_{1\leq i\leq n}\al_i\afE_{i+1,i}$ is diagonal for some $\al\in\afmbnn$ and let
$\afThn^{ss}=\aftiThn^{ss}\cap\afThn$.

\begin{Prop}\label{stabilization property}
Let $A,B\in\aftiThn$ such that $\co(B)=\ro(A)$ Then there exist unique $X_1,\cdots,X_m\in\aftiThn$, unique $P_1(x),\cdots,P_m(x)\in\sZ_1$ and an integer $a_0\geq 0$ such that, for all $a\geq a_0$,
\begin{equation}\label{eq stabilization}
[{}_aB]_1[{}_aA]_1=\sum_{1\leq i\leq m}P_i(a)[{}_aX_i]_1
\end{equation}
in the affine Schur algebra ${\mathcal S}_{\vtg}(\cycn,an+\sg(A))_\mbz$.
\end{Prop}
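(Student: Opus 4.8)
\emph{Plan of proof.} The plan is to adapt the argument of \cite[4.2]{BLM}: use the triangular factorization \ref{generalization of BLM 3.9} to reduce $[{}_aB]_1$, modulo lower terms, to products of semisimple-type elements, and then control the product of such a product with $[{}_aA]_1$ by iterating \ref{stabilization property1}, all inside an induction on $B$. Uniqueness is elementary and I would settle it first: for each fixed large $a$ the elements $[{}_aX]_1$ with $X\in\aftiThn$ and ${}_aX\in\afThn$ are pairwise distinct members of the standard $\mbz$-basis of ${\mathcal S}_\vtg(n,an+\sg(A))_\mbz$, so if $\sum_iP_i(a)[{}_aX_i]_1=\sum_jQ_j(a)[{}_aY_j]_1$ for all large $a$ with the $X_i$ (resp.\ the $Y_j$) pairwise distinct, then comparing basis coefficients gives $P_i(a)=Q_j(a)$ whenever $X_i=Y_j$ and $P_i(a)=0$ or $Q_j(a)=0$ otherwise, for all $a$ past some threshold; since these are polynomials, the corresponding identities in $\sZ_1$ follow, and in particular $\{X_i\}=\{Y_j\}$ after dropping zero coefficients.

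For existence I would induct on $B$ relative to $\p$, keeping $\ro(B)$ and $\co(B)=\ro(A)$ fixed; the set of $B'\in\aftiThn$ with $B'\pr B$ and these prescribed margins is finite, since the nonnegative off-diagonal entries of such a $B'$ are bounded above, and supported on finitely many positions, by the numbers $\sg_{i,j}(B)$, after which $\ro(B')$ determines the diagonal. If $B\in\aftiThn^{ss}$, then for $a$ large ${}_aB$ already has one of the two shapes occurring in \ref{stabilization property1}, so that corollary is literally \eqref{eq stabilization}, with coefficients $P_{T,A}$ or $Q_{T,A}\in\sZ_1$. For general $B$, I would apply \ref{generalization of BLM 3.9} to ${}_aB$ (legitimate once ${}_aB\in\afThn$): this produces a product $\Pi_B$ of elements of the form $[{}_a(C+\diag(\,\cdot\,))]_1$ in which $C$ is a sum of matrices $\afE_{i,i+1}$ (coming from $B^+$) or a sum of matrices $\afE_{i+1,i}$ (coming from $B^-$) — so each factor has the shape handled by \ref{stabilization property1} — together with an identity
$$\Pi_B=[{}_aB]_1+\sum_{B''\p B}c_{B''}[{}_aB'']_1$$
valid for all large $a$, where $B''$ ranges over $\aftiThn$ with $\ro(B'')=\ro(B)$ and $\co(B'')=\co(B)$ (one uses here that $\p$ is unchanged by adding $aI$, so the lower-order matrices supplied by \ref{generalization of BLM 3.9} are of the form ${}_aB''$), and where the $c_{B''}\in\mbz$ are independent of $a$ — a point I would read off from the proof of \ref{generalization of BLM 3.9} via \ref{lem for generalization of 3.9 in BLM} and \ref{tri-affine Schur algebras}.

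Multiplying $\Pi_B$ into $[{}_aA]_1$, I would read its factors from right to left, invoking \ref{stabilization property1}(2) for the factors coming from $B^-$ and \ref{stabilization property1}(1) for those coming from $B^+$. The hypothesis $\co(B)=\ro(A)$ gives $\co(\Pi_B)=\co({}_aB)=\ro({}_aA)$, so the first multiplication is legitimate; and since a multiplication of the type in \ref{stabilization property1} leaves the column-sum vector fixed and makes the row-sum vector of each output term equal to that of the factor just applied, each subsequent multiplication is legitimate too. At every stage the index matrices of the output depend only on those of the input, not on $a$, and the coefficients are products of the polynomials $P_{T,\cdot},Q_{T,\cdot}$, hence remain in the ring $\sZ_1$; so for all $a\geq a_0$ (some threshold) one gets $\Pi_B[{}_aA]_1=\sum_{1\leq k\leq N}R_k(a)[{}_aY_k]_1$ with $Y_k\in\aftiThn$ and $R_k\in\sZ_1$ independent of $a$. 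By the inductive hypothesis each $[{}_aB'']_1[{}_aA]_1$ with $B''\p B$ already has the form \eqref{eq stabilization} with coefficients in $\sZ_1$, so subtracting, $[{}_aB]_1[{}_aA]_1=\Pi_B[{}_aA]_1-\sum_{B''\p B}c_{B''}[{}_aB'']_1[{}_aA]_1$ is, for $a\geq a_0$, a $\sZ_1$-linear combination of finitely many $[{}_aX]_1$ with $X\in\aftiThn$ independent of $a$; merging repetitions and discarding zero polynomials turns this into \eqref{eq stabilization}.

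I expect the real work to be the bookkeeping of the third step: verifying carefully that iterating \ref{stabilization property1} along the explicit factorization $\Pi_B$ genuinely stabilizes — that the output index matrices become independent of $a$ and the accumulated coefficients stay inside $\sZ_1$ — and that the row- and column-sum compatibilities needed to invoke \ref{stabilization property1} persist at each step. This is the one place where the explicit shape of the factors delivered by \ref{generalization of BLM 3.9} (not merely its statement) must be used; the remaining manipulations are formal.
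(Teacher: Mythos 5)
Your plan coincides with the paper's: factor $[{}_aB]_1$ modulo $\p$-lower terms into semisimple-type pieces via Proposition~\ref{generalization of BLM 3.9}, stabilize the product with $[{}_aA]_1$ by iterating Corollary~\ref{stabilization property1}, then subtract and induct on the lower terms. Your uniqueness argument is correct, and so is the observation that the set of $B'\pr B$ with prescribed margins is finite (though the paper instead inducts on $\ddet{B}$, citing [DDF, 3.7.6] for $B'\p B\Rightarrow\ddet{B'}<\ddet{B}$; the two inductions are interchangeable here).

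The gap is your assertion that $\Pi_B=[{}_aB]_1+\sum_{B''\p B}c_{B''}[{}_aB'']_1$ with integers $c_{B''}$ independent of $a$, ``read off'' from Lemma~\ref{lem for generalization of 3.9 in BLM} and Proposition~\ref{tri-affine Schur algebras}. Neither result yields this. Lemma~\ref{lem for generalization of 3.9 in BLM} is a purely formal rewriting; Proposition~\ref{tri-affine Schur algebras} is an identity in $\afSr$ for each fixed $r$ and $\la\in\afLanr$, and its ``lower terms'' are a $\sZ$-linear combination whose coefficients depend on $\la$, hence on $a$ once $\la$ is tied to ${}_aB$ as in Proposition~\ref{generalization of BLM 3.9}. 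They are in fact typically non-constant: expanding $\Pi_B$ instead by iterating Corollary~\ref{stabilization property1} produces, for the lower-order output matrices, factors from the diagonal parts of $P_{T,A}(x)$ and $Q_{T,A}(x)$ of positive degree in $x$ whenever some $t_{i,i}>0$. The paper avoids your claim entirely: it first applies the iterated Corollary~\ref{stabilization property1} to the bare product of semisimple factors, obtaining \eqref{eq 3-stabilization} $[{}_cA_1]_1\cdots[{}_cA_N]_1=\sum_iQ_i(c)[{}_cZ_i]_1$ with $Q_i\in\sZ_1$, and then compares this with \eqref{eq 2-stabilization} $[{}_cA_1]_1\cdots[{}_cA_N]_1=[{}_cB]_1+\text{lower terms}$ to force $Z_1=B$, $Q_1=1$ and $Z_i\p B$ for $i\geq2$. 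This proves — rather than assumes — that the lower-term coefficients are polynomials in $\sZ_1$, which is exactly what your subtraction-and-induction step needs. Replace the constancy claim by that comparison and the rest of your argument is sound.
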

\begin{proof}
If $B\in\aftiThn^{ss}$
then the assertion follows from \ref{stabilization property1}. So, by induction, if $B_1,\cdots,B_l\in\aftiThn^{ss}$ are such that $\co(B_i)=\ro(B_{i+1})$ and $\co(B_l)=\ro(A)$, then there exist $Y_j\in\aftiThn$, $Q_j(x)\in\sZ_1$ ($1\leq j\leq m$), and $a_0\in\mbn$ such that
\begin{equation}\label{eq 1-stabilization}
[{}_aB_1]_1\cdots[{}_aB_l]_1[{}_aA]_1=\sum_{1\leq j\leq m}Q_j(a)[{}_aY_j]_1
\end{equation}
for all $a\geq a_0$.

In general, we apply induction on $|\!|B|\!|$. If $|\!|B|\!|=0$ then $B$ is diagonal and $[{}_aB]_1[{}_aA]_1=[{}_aA]_1$ for all large enough $a$. Assume $|\!|B|\!|\geq 1$ and the result is true for those $|\!|B_1|\!|$ with $|\!|B_1|\!|<|\!|B|\!|$. Choose $b\in\mbn$ such that ${}_bB\in\afThn$ and apply \ref{generalization of BLM 3.9} to ${}_bB$. Thus, there exist $B_1,\cdots,B_N\in\afThn^{ss}$, such that $\co(B_i)=\ro(B_{i+1})$ and
$$[{}_aB_1]_1\cdots[{}_aB_N]_1=[{}_{a+b}B]_1+\text{lower terms}$$
for $a\in\mbn$. Let $A_i=B_i-bI\in\aftiThn^{ss}$ for $1\leq i\leq N$. Then we have
\begin{equation}\label{eq 2-stabilization}
[{}_{c}A_1]_1\cdots[{}_{c}A_N]_1=[{}_{c-b}B_1]_1\cdots[{}_{c-b}B_N]_1
=[{}_{c}B]_1+\text{lower terms}
\end{equation}
for $c\geq b$. By \eqref{eq 1-stabilization}, there exist $Z_i,Z_j'\in\aftiThn$ and $Q_i(x),Q_j'(x)\in\sZ_1$ ($1\leq i\leq m$,  $1\leq j\leq m'$) such that
\begin{equation}\label{eq 3-stabilization}
[{}_{c}A_1]_1\cdots[{}_{c}A_N]_1=\sum_{1\leq i\leq m}Q_i(c)[{}_cZ_i]_1,
\end{equation}
and
\begin{equation}\label{eq 4-stabilization}
[{}_{c}A_1]_1\cdots[{}_{c}A_N]_1[{}_cA]_1=\sum_{1\leq j\leq m'}Q_j'(c)[{}_cZ_j']_1
\end{equation}
for all large enough $c$.
Comparing \eqref{eq 2-stabilization} with \eqref{eq 3-stabilization}, we see that we may assume that $Z_1=B$, $Q_1(x)=1$ and $Z_i\p B$ for $2\leq i\leq m$. Thus by \eqref{eq 3-stabilization} and \eqref{eq 4-stabilization}, for large $c$,
\begin{equation}
\begin{split}
[{}_cB]_1[{}_cA]_1&=[{}_{c}A_1]_1\cdots[{}_{c}A_N]_1[{}_cA]_1-\sum_{2\leq i\leq m}Q_i(c)[{}_cZ_i]_1[{}_cA]_1\\
&=\sum_{1\leq j\leq m'}Q_j'(c)[{}_cZ_j']_1-\sum_{2\leq i\leq m}Q_i(c)[{}_cZ_i]_1[{}_cA]_1.
\end{split}
\end{equation}
Since, by \cite[3.7.6]{DDF},  $|\!|Z_i|\!|<|\!|B|\!|$ for $i>1$, the induction hypothesis applied to $Z_i$ shows that
$[{}_cZ_i]_1[{}_cA]_1$ is given by an expression like in the right hand side of \eqref{eq stabilization},
for $i>1$ and large $c$. Consequently,  $[{}_cB]_1[{}_cA]_1$ is of the required form.
\end{proof}

We now use \ref{stabilization property} to construct the $\mbz$-algebra $\sK$ as follows.
Let $\tisK$ be the free $\sZ_1$-module with basis $\{A\mid A\in\aftiThn\}$. There is a unique structure of associative $\sZ_1$-algebra (without unit) on this module in which
$B\cdot A=\sum_{1\leq i\leq m}P_i(x)X_i$ (notation of \ref{stabilization property}) if $\co(B)=\ro(A)$ and $B\cdot A=0$, otherwise.
Consider the specialization $\sZ_1\ra\mbz$ obtained by sending $x$ to $0$ and let
$$\sK=\tisK\ot_{\sZ_1}\mbz.$$
Then $\sK$ is an associative algebra over $\mbz$ with basis $\{A\ot 1\mid A\in\aftiThn\}$. We will denote the element $A\ot 1$ by $[A]_1$ in the sequel. By \ref{stabilization property1} and \ref{generalization of BLM 3.9} the following multiplication
formulas hold in the algebra $\sK$.
\begin{Prop}\label{formula in K}
Let $A,B\in\aftiThn$ be such that $\co(B)=\ro(A)$. In the algebra $\sK$, the following
statements hold.

$(1)$ If $B-\sum_{1\leq i\leq n}\al_i\afE_{i,i+1}$ is diagonal for some $\al\in\afmbnn$, then
\begin{equation*}
[B]_1 [A]_1=\sum\limits_{T\in\afThn,\,\ro(T)=\al\atop a_{i,j}-t_{i-1,j}+t_{i,j}\geq 0,\,\forall i\not=j}\prod\limits_{1\leq i\leq n\atop j\in\mbz}\bigg({a_{i,j}-t_{i-1,j}+t_{i,j}\atop t_{i,j}}\bigg)[A+T-\ti T]_1;
\end{equation*}

$(2)$ If $B-\sum_{1\leq i\leq n}\al_i\afE_{i+1,i}$ is diagonal for some $\al\in\afmbnn$, then
$$ [B]_1[A]_1
=\sum\limits_{T\in\afThn,\,\ro(T)=\al\atop a_{i,j}+t_{i-1,j}-t_{i,j}\geq 0,\,\forall i\not=j}\prod\limits_{1\leq i\leq n\atop j\in\mbz}\bigg({a_{i,j}+t_{i-1,j}-t_{i,j}\atop t_{i-1,j}}\bigg)[A-T+\ti T]_1.$$

$(3)$ There exist upper triangular matrixs $A_i$ and lower triangular matrixs $B_j$ in $\aftiThn^{ss}$ ($1\leq i\leq s$, $1\leq j\leq t$) such that
\begin{equation*}
[A_1]_1
\cdots
[A_s]_1
[B_1]_1
\cdots
[B_t]_1=[A]_1+\text{lower terms},
\end{equation*}
where ``lower
terms" stands for a $\mbz$-linear combination of terms $[A']_1$ with $A'\in\aftiThn$, $A'\p A$, $\co(A')=\co(A)$ and $\ro(A')=\ro(A)$.
\end{Prop}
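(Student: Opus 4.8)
The plan is to deduce all three statements directly from the corresponding results already established for the genuine affine Schur algebras $\afSrmbz$, via the defining specialization $\tisK\ot_{\sZ_1}\mbz=\sK$ obtained by sending $x\mapsto 0$. The point is that by construction of $\sK$, a multiplication identity $[B]_1[A]_1=\sum_i P_i(x)[X_i]_1$ holds in $\tisK$ precisely when, for all sufficiently large $a$, the identity $[{}_aB]_1[{}_aA]_1=\sum_i P_i(a)[{}_aX_i]_1$ holds in $\afSrmbz$ with $r=an+\sg(A)$; and then the identity in $\sK$ is obtained by evaluating the polynomials $P_i$ at $x=0$. So for parts (1) and (2) it suffices to check that the right-hand sides proposed here are exactly the specializations at $x=0$ of the stable expressions furnished by \ref{stabilization property1}.

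For part (1), since $B-\sum_{1\le i\le n}\al_i\afE_{i,i+1}$ is diagonal, $B\in\aftiThn^{ss}$, so \ref{stabilization property1}(1) applies (after shifting both $A$ and $B$ by $aI$ to land in $\afThn$, as in \ref{stabilization property}), giving for large $a$ and $r=an+\sg(A)$
\[
[{}_aB]_1[{}_aA]_1=\sum_{T\in\afThn,\,\ro(T)=\al\atop a_{i,j}-t_{i-1,j}+t_{i,j}\geq 0,\,\forall i\not=j}P_{T,A}(a)[{}_a(A+T-\ti T)]_1 .
\]
By definition of $\sK$, the product $[B]_1[A]_1$ in $\sK$ is obtained by replacing $P_{T,A}(a)$ with $P_{T,A}(0)$. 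Now
\[
P_{T,A}(0)=\prod_{1\leq i\leq n,\,j\in\mbz,\,j\not=i}\bpa{a_{i,j}-t_{i-1,j}+t_{i,j}}{t_{i,j}}\prod_{1\leq i\leq n}\bpa{a_{i,i}-t_{i-1,i}+t_{i,i}}{t_{i,i}}=\prod_{1\leq i\leq n,\,j\in\mbz}\bpa{a_{i,j}-t_{i-1,j}+t_{i,j}}{t_{i,j}},
\]
which is exactly the coefficient in the statement; note the constraint $a_{i,i}-t_{i-1,i}+t_{i,i}\ge 0$ is automatic (equivalently, absorbed into the binomial coefficient convention), so summing over $a_{i,j}-t_{i-1,j}+t_{i,j}\ge 0$ for $i\ne j$ is the right index set. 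This proves (1), and (2) follows in the same way from \ref{stabilization property1}(2), using $Q_{T,A}(0)=\prod_{1\le i\le n,\,j\in\mbz}\binom{a_{i,j}+t_{i-1,j}-t_{i,j}}{t_{i-1,j}}$.

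For part (3), the input is \ref{generalization of BLM 3.9}: choosing $b\in\mbn$ with ${}_bB\in\afThn$ — here playing the role of the ``$A$'' of that proposition with $A^\pm$ the strictly upper/lower triangular parts — we obtain matrices $A_1,\dots,A_s$ (upper triangular, of the form $A_{x_i\bsa_i}+$diag) and $B_1,\dots,B_t$ (lower triangular, of the form $B_{y_j\bsb_j}+$diag), all in $\afThn^{ss}$, with $\co$ and $\ro$ chaining correctly, such that in $\afSrmbz$ one has $[{}_a(A_1+\diag(\la^{(1)}))]_1\cdots=[{}_{a+b}B]_1+\text{lower terms}$ for all $a\ge 0$, where ``lower terms'' is a $\mbz$-combination of $[{}_{a+b}A']_1$ with $A'\p B$ and the same $\ro,\co$. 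Both sides stabilize (the left by iterating \ref{stabilization property1} via \ref{stabilization property}, the right by definition), and the order relation $\p$ is preserved under the stabilization since it depends only on the fixed off-diagonal data $\sg_{i,j}$. Passing to $\sK$ (evaluate at $x=0$, and shift back by $-bI$ so the $A_i,B_j$ become the desired elements of $\aftiThn^{ss}$) yields the claimed triangular identity. I expect the only genuinely delicate point to be bookkeeping: verifying that the ``lower terms'' produced at the level of $\afSrmbz$ — in particular the extra terms created when one further expands the $[{}_cZ_i]_1[{}_cA]_1$ in the proof of \ref{stabilization property} — remain of the form $[A']_1$ with $A'\p A$ and unchanged $\ro,\co$ after specialization; this is exactly the content of the $\|\cdot\|$-induction in \ref{stabilization property} together with \cite[3.7.6]{DDF}, so no new idea is needed, only care in tracking the indices $i-1$ versus $i$ in the $\ti T$ shift.
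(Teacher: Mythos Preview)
Your proposal is correct and follows the same route the paper indicates: the paper simply writes ``By \ref{stabilization property1} and \ref{generalization of BLM 3.9} the following multiplication formulas hold in the algebra $\sK$,'' and you have supplied the details of the specialization $x\mapsto 0$ that makes this transfer work.

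Two small remarks. First, in part (3) you write ${}_bB$ when you mean ${}_bA$ (the matrix being decomposed in the statement is $A$, not $B$). Second, your treatment of part (3) is slightly more elaborate than necessary: since \ref{generalization of BLM 3.9} already gives an identity valid for \emph{all} $a\ge 0$ with the leading coefficient identically $1$ and the lower terms supported on $A'\prec A$ with matching $\ro,\co$, the uniqueness clause in \ref{stabilization property} forces the stabilizing polynomials $P_i(x)$ to have constant term $1$ on the leading term and to be supported on the same lower $A'$; there is no need to revisit the $\|\cdot\|$-induction or the $[{}_cZ_i]_1[{}_cA]_1$ expansions from the proof of \ref{stabilization property}. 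The invariance of $\prec$, $\ro$, $\co$ under diagonal shifts (which you note) is all that is required to pass from ${}_bA$ back to $A$.
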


The algebra $\sK$ and $\afSrmbz$ are related by the following algebra homomorphism.
\begin{Prop}\label{dzr}
The linear map $\dzr:\sK\ra\afSrmbz$ defined by
$$\dzr([A]_1)=\begin{cases}[A]_1& \mathrm{if\ }A\in\afThnr;\\
0&  \mathrm{otherwise}\end{cases}$$
is a surjective  algebra homomorphism.
\end{Prop}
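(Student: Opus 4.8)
The plan is as follows. That $\dzr$ is a well-defined, $\mbz$-linear, surjective map is immediate: it is prescribed on the $\mbz$-basis $\{[A]_1\mid A\in\aftiThn\}$ of $\sK$, and every basis element $[A]_1$ $(A\in\afThnr)$ of $\afSrmbz$ lies in its image. So the content is that $\dzr$ preserves products, and the first step is a reduction to semisimple matrices: by \ref{formula in K}(3) and induction on $\ddet{A}$ (the ``lower terms'' there having strictly smaller value of $\ddet{\cdot}$), $\sK$ is generated by $\{[B]_1\mid B\in\aftiThn^{ss}\}$, and a routine argument then reduces the problem to proving
\[
\dzr([B]_1[A]_1)=\dzr([B]_1)\,\dzr([A]_1)
\]
for every $B\in\aftiThn^{ss}$ and every $A\in\aftiThn$.

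Now I would split into cases. If $\co(B)\ne\ro(A)$, then $[B]_1[A]_1=0$ in $\sK$, while the right side vanishes too — either one of $\dzr([B]_1),\dzr([A]_1)$ is $0$, or else $A,B\in\afThnr$ and $[B]_1[A]_1=0$ already in $\afSrmbz$ by the defining relation \eqref{def of standard basis} (the column type of the left factor must equal the row type of the right factor). If $\co(B)=\ro(A)$ but $\sg(A)\ne r$, then $\dzr([A]_1)=0$, and writing $[B]_1[A]_1=\sum_iP_i(0)[X_i]_1$ as in \ref{stabilization property} we have $\sg(X_i)=\sg(A)\ne r$, so $\dzr$ kills each $[X_i]_1$. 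There remains the case $\co(B)=\ro(A)$ and $\sg(A)=\sg(B)=r$; I treat $B=\sum_{1\le i\le n}\al_i\afE_{i,i+1}+\diag(\bt)$ with $\al\in\afmbnn$, the case where $B-\sum_{1\le i\le n}\al_i\afE_{i+1,i}$ is diagonal being entirely parallel (via \ref{formula in K}(2) and \ref{funtmental multiplication formula}(2)). Here $\co(B)=\ro(A)$ forces $\bt_j=\ro(A)_j-\al_{j-1}$, and \ref{formula in K}(1) gives, in $\sK$,
\[
[B]_1[A]_1=\sum_T c_T\,[A+T-\ti T]_1,\qquad c_T=\prod_{1\le i\le n,\ j\in\mbz}\binom{(A+T-\ti T)_{i,j}}{t_{i,j}},
\]
the sum being over $T\in\afThn$ with $\ro(T)=\al$ and $(A+T-\ti T)_{i,j}\ge0$ for all $i\ne j$. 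Since $\sg(A+T-\ti T)=\sg(A)=r$, the matrix $A+T-\ti T$ lies in $\afThnr$ exactly when all of its diagonal entries are $\ge0$, so applying $\dzr$ retains precisely the summands with $A+T-\ti T\in\afThn$.

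If $A,B\in\afThnr$ (equivalently $\bt\ge0$), that surviving sum is, by \ref{funtmental multiplication formula}(1), exactly the product of $[B]_1$ and $[A]_1$ computed in $\afSrmbz$, so $\dzr([B]_1[A]_1)=\dzr([B]_1)\dzr([A]_1)$. If $A\notin\afThnr$, say $a_{k,k}<0$, then $\dzr([A]_1)=0$ and $\dzr([B]_1[A]_1)=0$ as well: for each $T$ in the sum, $(A+T-\ti T)_{k,k}=a_{k,k}-t_{k-1,k}+t_{k,k}<t_{k,k}$ because $a_{k,k}<0\le t_{k-1,k}$, so either $(A+T-\ti T)_{k,k}<0$ and $A+T-\ti T\notin\afThn$, or $0\le(A+T-\ti T)_{k,k}<t_{k,k}$ and the factor $\binom{(A+T-\ti T)_{k,k}}{t_{k,k}}$ of $c_T$ vanishes. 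If $B\notin\afThnr$, say $\bt_{j_0}<0$, then $\dzr([B]_1)=0$, and using $\ro(A+T-\ti T)=\ro(B)$ together with $\ro(T)=\al$ one gets $\sum_j\big((A+T-\ti T)_{j_0,j}-t_{j_0,j}\big)=\bt_{j_0}<0$, hence $(A+T-\ti T)_{j_0,j^\ast}<t_{j_0,j^\ast}$ for some $j^\ast$; as before this forces either $A+T-\ti T\notin\afThn$ or a vanishing binomial factor of $c_T$, so $\dzr([B]_1[A]_1)=0$.

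The point I expect to need the most care — though not, in the end, a genuine obstacle — is precisely the bookkeeping just sketched: \ref{formula in K}(1) (resp. $(2)$) carries the same coefficients as its affine‑Schur‑algebra counterpart \ref{funtmental multiplication formula}(1) (resp. $(2)$) but runs over a larger index set of matrices $T$, and one has to see that the extra terms are exactly those annihilated by $\dzr$, using only the elementary vanishing $\binom{m}{t}=0$ for $0\le m<t$ together with the $\ro$/$\co$ identities above. Granting this, the cases together with the reduction to semisimple $B$ show that $\dzr$ is an algebra homomorphism; and it is surjective since it hits every basis element of $\afSrmbz$.
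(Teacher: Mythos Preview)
Your proposal is correct and follows essentially the same approach as the paper's own proof: reduce to $B\in\aftiThn^{ss}$ via \ref{formula in K}(3), then split into the cases $A,B\in\afThnr$; $a_{k,k}<0$ for some $k$; and $\bt_{j_0}<0$ for some $j_0$, using the same binomial-vanishing argument in the latter two. Your treatment is slightly more explicit (you handle $\co(B)\neq\ro(A)$ and $\sg(A)\neq r$ separately and phrase Case~3 via $\ro(A+T-\ti T)=\ro(B)$ rather than directly via $\co(B)=\ro(A)$), but the substance is identical.
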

\begin{proof}
Since,  by \ref{formula in K}(3), the algebra $\sK$ is generated by $[A]$ with $A\in\aftiThn^{ss}$, it is enough to prove that
\begin{equation}\label{eq dzr}
\dzr([B]_1[A]_1)=\dzr([B]_1)\dzr([A]_1)
\end{equation}
for $B\in\aftiThn^{ss}$ and $A\in\aftiThn$ with $\co(B)=\ro(A)$.
If $\sg(A)\not=r$, then $\dzr([B]_1[A]_1)=0=\dzr([B]_1)\dzr([A]_1)$.

Now we assume $\sg(A)=r$, $\co(B)=\ro(A)$, $B=\sum_{1\leq i\leq n}\al_i\afE_{i,i+1}+\diag(\bt)\in\aftiThn^{ss}$ for some $\al\in\afmbnn$ and $\bt\in\afmbzn$. Let us prove \eqref{eq dzr} in three cases.

Case 1 If $A,B\in\afThnr$, then the assertion follows from
\ref{funtmental multiplication formula} and \ref{formula in K}(1).

Case 2 Suppose $a_{i_0,i_0}<0$ for some $1\leq i_0\leq n$. If $T\in\afThn$ is such that $\ro(T)=\al$ and $A+T-\ti T\in\aftiThn$, then $$\dzr\left(\bigg({a_{i_0,i_0}+t_{i_0,i_0}-t_{i_0-1,i_0}\atop t_{i_0,i_0}}\bigg)[A+T-\ti T]_1\right)=0$$
since $a_{i_0,i_0}+t_{i_0,i_0}-t_{i_0-1,i_0}<t_{i_0,i_0}$. It follows from \ref{formula in K}(1) that $\dzr([B]_1[A]_1)=0=\dzr([B]_1)\dzr([A]_1)$.

Case 3 Suppose $\bt_{i_0}<0$ for some $1\leq i_0\leq n$.
Let $T\in\afThn$ be such that $\ro(T)=\al$ and $A+T-\ti T\in\aftiThn$.
Since $\bt+\sum_{1\leq i\leq n}\al_i\afbse_{i+1}=\co(B)=\ro(A)$ and $\ro(T)=\al$,  we have  $$\sum_{s\in\mbz}(a_{i_0,s}-t_{i_0-1,s})=\sum_{s\in\mbz}a_{i_0,s}-\al_{i_0-1}
=\bt_{i_0}<0,$$
and hence $a_{i_0,k}-t_{i_0-1,k}+t_{i_0,k}<t_{i_0,k}$ for some $k\in\mbz$. Thus, $$\dzr\left(\bigg({a_{i_0,k}+t_{i_0,k}-t_{i_0-1,k}\atop t_{i_0,k}}\bigg)[A+T-\ti T]_1\right)=0.$$
This together with \ref{formula in K}(1) implies that $\dzr([B]_1[A]_1)=0=\dzr([B]_1)\dzr([A]_1)$.

Similarly, if $B-\sum_{1\leq i\leq n}\al_i\afE_{i+1,i}$ is diagonal for some $\al\in\afThn$,   $\dzr([B]_1[A]_1)=0=\dzr([B]_1)\dzr([A]_1)$. The proof is completed.
\end{proof}

\section{The completion algebra $\hsKq$ of $\sK$ and multiplication formulas}
Let $\sKq=\sK\ot_\mbz\mbq$.
As in \cite[5.1]{BLM}, let $\hsKq$ be
the vector space of all formal (possibly infinite) $\mbq$-linear combinations
$\sum_{A\in\aftiThn}\beta_A[A]$ such that
for any ${\bf x}\in\mathbb Z^n$, the sets
${\{A\in\aftiThn\ |\ \beta_A\neq0,\ \ro(A)={\bf
x}\}}$ and ${\{A\in\aftiThn\ |\ \beta_A\neq0,\ \co(A)={\bf
x}\}}$ are finite.
We shall regard $\sK$ naturally as a subset of $\hsKq$. We
can define the product of two elements
$\sum_{A\in\widetilde\Xi}\beta_A[A]_1$,
$\sum_{B\in\widetilde\Xi}\gamma_B[B]_1$ in $\hsKq$ to be
$\sum_{A,B}\beta_A\gamma_B[A]_1\cdot[B]_1$ where $[A]_1\cdot[B]_1$ is the
product in $\sK$.
This defines an associative algebra structure on $\hsKq$.
This algebra has a unit element: $\sum_{\la\in\afmbzn}[\diag(\la)]_1$.

We now establish  some important multiplication formulas in $\hsKq$ and $\afSrmbq$, which will be used to realize $\afuglz$ as a $\mbz$-subalgebra of $\hsKq$. These formulas are the affine analogue of \cite[5.3]{BLM}.

For $m,k_1,k_2,\cdots,k_t\in\mbn$ with $\sum_{1\leq i\leq l}k_i=m$, let  $\left({m\atop k_1,k_2,\cdots,k_t}\right)=\frac{m!}{k_1!k_2!\cdots k_t!}.$ For $\la,\mu^{(1)},\cdots,\mu^{(s)}\in\afmbnn$ with $\la=\sum_{1\leq j\leq s}\mu^{(j)}$, let $$\bpa{\la}{\mu^{(1)},\cdots,\mu^{(s)}}=\prod_{1\leq i\leq n}\bpa{\la_i}{\mu_i^{(1)},\cdots,\mu_i^{(s)}}.$$
Recall the order relation $\leq$ defined in \eqref{order on afmbzn}.
We need the following well known combinational formulas.
\begin{Lem}\label{combination formula1}
For $m,n\in\mbz$, $a,b\in\mbn$ we have

$(1)$ $\left({n\atop a}\right)=\sum\limits_{0\leq j\leq a}\left({m\atop j}\right)\left({n-m\atop a-j}\right);$

$(2)$ $\left({m\atop a}\right)\left({m\atop b}\right)=\sum\limits_{0\leq c\leq \min\{a,b\}}\left({a+b-c\atop c,a-c,b-c}\right)\left({m\atop a+b-c}\right).$
\end{Lem}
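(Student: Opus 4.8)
The plan is to regard both formulas as polynomial identities in the entries $m$ and $n$ and to establish them by a subset-counting argument valid for nonnegative integers, extending to all of $\mbz$ via the standard fact that two polynomials agreeing at infinitely many integers agree identically.

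For (1) --- the Chu--Vandermonde identity --- I would first fix a nonnegative integer $m$ and note that both sides are polynomials in $n$ of degree at most $a$ (the term $\binom{n-m}{a-j}$ has degree $a-j$ in $n$). For integers $n\geq\max\{m,a\}$ both count the $a$-element subsets of $\{1,\dots,n\}$, the right-hand side arising by recording how many elements of such a subset lie in $\{1,\dots,m\}$; hence the two polynomials in $n$ coincide for every fixed $m\in\mbn$. Viewing both sides next as polynomials in $m$ (again of degree $\leq a$, since $\binom{m}{j}$ has degree $j$ and $\binom{n-m}{a-j}$ degree $a-j$ in $m$), they agree for all $m\in\mbn$ and therefore identically, which yields (1) for all $m,n\in\mbz$. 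Alternatively, (1) is simply the coefficient of $x^a$ on both sides of $(1+x)^m(1+x)^{n-m}=(1+x)^n$.

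For (2) I would deduce it from (1). Specializing (1) gives $\binom{m}{b}=\sum_{0\leq c\leq b}\binom{a}{c}\binom{m-a}{b-c}$, and multiplying by $\binom{m}{a}$ yields $\binom{m}{a}\binom{m}{b}=\sum_c\binom{m}{a}\binom{a}{c}\binom{m-a}{b-c}$. Applying the elementary identity $\binom{m}{a}\binom{a}{c}=\binom{m}{c}\binom{m-c}{a-c}$ and then combining the remaining factorials shows that each summand equals $\binom{a+b-c}{c,a-c,b-c}\binom{m}{a+b-c}$, while the multinomial coefficient vanishes unless $c\leq\min\{a,b\}$, which collapses the range of summation. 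Equivalently, one may prove (2) directly by counting pairs $(S,T)$ with $S$ an $a$-subset and $T$ a $b$-subset of an $m$-set, organized by $c=|S\cap T|$ (so $|S\cup T|=a+b-c$), and then pass from $m\in\mbn$ to $m\in\mbz$ by the polynomiality argument above.

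There is no genuine obstacle here. The only points requiring mild care are keeping the degree bookkeeping in (1) honest, so that agreement at infinitely many integers forces equality of polynomials, and reading the factorial manipulation in (2) as an identity of polynomials (equivalently, rational functions) in $m$, so that it remains valid even when the intermediate ``factorials'' would formally involve negative integers.
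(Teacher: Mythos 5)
Your argument is correct. Note, however, that the paper itself offers no proof of this lemma---it is introduced with ``We need the following well known combinational formulas'' and cited as standard---so there is nothing to compare against in the published text. (In the author's scratch material appearing after \verb|\end{document}| there is a generating-function proof of part (2) for $m\in\mbn$, obtained by expanding $(1+x)^m(1+y)^m=(1+xy+x+y)^m$ via the multinomial theorem and comparing coefficients of $x^ay^b$; your derivation of (2) from (1) via $\binom{m}{a}\binom{a}{c}=\binom{m}{c}\binom{m-c}{a-c}$ is a different but equally standard route, and has the mild advantage of handling the extension to $m\in\mbz$ more transparently, since the polynomiality step is already built into your proof of (1).) Your two small cautions---that the degree bound in $n$ must actually hold so that agreement at infinitely many integers gives equality of polynomials, and that the factorial manipulations in (2) should be read as identities of rational functions in $m$ before specializing---are exactly the right things to flag, and you handle both adequately.
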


\begin{Coro}\label{combination formula2}
For $\la,\mu\in\afmbnn$ and $\al,\bt\in\afmbzn$ we have

$(1)\left({\al+\bt\atop\la}\right)=\sum\limits_{\mu\in\afmbnn,\,\mu\leq\la}
\left({\al\atop\mu}\right)\left({\bt\atop\la-\mu}\right);$

$(2)\left({\al\atop\la}\right)\left({\al\atop\mu}\right)=
\sum\limits_{\ga\in\afmbnn\atop\ga\leq\la,\,\ga\leq\mu}
\left({\la+\mu-\ga\atop\ga,\la-\ga,\mu-\ga}\right)\left({\al\atop\la+ \mu-\ga}\right)$.
\end{Coro}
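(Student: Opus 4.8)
The plan is to reduce both parts of the corollary to the scalar identities of Lemma~\ref{combination formula1} by working coordinatewise and then taking products. Recall that, by definition, $\bpa{\al}{\la}=\prod_{1\le i\le n}\bpa{\al_i}{\la_i}$ for $\al\in\afmbzn$ and $\la\in\afmbnn$, where each $\bpa{\al_i}{\la_i}$ is the generalized binomial coefficient $\al_i(\al_i-1)\cdots(\al_i-\la_i+1)/\la_i!$ (note that $\al_i$ may be negative), and similarly the vector multinomial $\bpa{\la+\mu-\ga}{\ga,\la-\ga,\mu-\ga}$ is the product over $1\le i\le n$ of the corresponding scalar multinomials. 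Recall also that a vector in $\afmbzn$ is determined by its entries in positions $1,\dots,n$ via periodic extension, and that the order relation $\le$ on $\afmbzn$ (see~\eqref{order on afmbzn}) is the coordinatewise one on those $n$ entries.

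For (1), I would first write $\bpa{\al+\bt}{\la}=\prod_{1\le i\le n}\bpa{\al_i+\bt_i}{\la_i}$ and apply Lemma~\ref{combination formula1}(1) to the $i$-th factor with $n\mapsto\al_i+\bt_i$, $m\mapsto\al_i$ (so $n-m=\bt_i$) and $a\mapsto\la_i$; this is legitimate for arbitrary integers $\al_i,\bt_i$ since Lemma~\ref{combination formula1}(1) is a polynomial identity. This gives $\bpa{\al_i+\bt_i}{\la_i}=\sum_{0\le j_i\le\la_i}\bpa{\al_i}{j_i}\bpa{\bt_i}{\la_i-j_i}$. Expanding the product over $i=1,\dots,n$ (a finite sum, as each $\la_i$ is a fixed natural number) and reindexing: each tuple $(j_1,\dots,j_n)$ extends periodically to a unique $\mu\in\afmbnn$ with $\mu\le\la$, whence $\la-\mu\in\afmbnn$, and conversely every such $\mu$ occurs exactly once; collecting terms and using the multiplicativity of the symbols $\bpa{\cdot}{\cdot}$ then yields (1). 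Part (2) goes the same way: apply Lemma~\ref{combination formula1}(2) to each factor of $\bpa{\al}{\la}\bpa{\al}{\mu}=\prod_{1\le i\le n}\bpa{\al_i}{\la_i}\bpa{\al_i}{\mu_i}$ with $m\mapsto\al_i$, $a\mapsto\la_i$, $b\mapsto\mu_i$, then multiply over $i$ and reindex the product of sums by the periodic vector $\ga=(c_1,\dots,c_n)\in\afmbnn$, which satisfies $\ga\le\la$ and $\ga\le\mu$ (and conversely).

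Since this is purely combinatorial bookkeeping, I do not expect a genuine obstacle. The only points requiring care are: (a) the scalar identities of Lemma~\ref{combination formula1} are invoked with possibly-negative integer top arguments, so all binomials must be read as generalized (polynomial) binomial coefficients — which is exactly the convention under which Lemma~\ref{combination formula1} is already stated; and (b) the straightforward verification that the bijection between $n$-tuples of scalar summation indices and periodic vectors in $\afmbnn$ carries the scalar constraints ($0\le j_i\le\la_i$, resp.\ $0\le c_i\le\min\{\la_i,\mu_i\}$) onto the vector constraints ($\mu\le\la$, resp.\ $\ga\le\la$ and $\ga\le\mu$), together with matching up the product-of-multinomials against the vector multinomial by its definition.
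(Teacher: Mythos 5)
Your proof is correct, and it fills in exactly the step the paper treats as immediate: the paper states Corollary~\ref{combination formula2} with no proof, the intended argument being precisely the coordinatewise application of Lemma~\ref{combination formula1} followed by taking the product over $i=1,\dots,n$ and reindexing the $n$-tuples of scalar summation indices $(j_1,\dots,j_n)$ (resp.\ $(c_1,\dots,c_n)$) by the periodic vector $\mu\in\afmbnn$ with $\mu\le\la$ (resp.\ $\ga\le\la$, $\ga\le\mu$), using the defining multiplicativity of the vector binomial/multinomial symbols.
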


For $A\in\afThnpm$, $\la\in\afmbnn$ let
\begin{equation*}
\begin{split}
A\br\la&=\sum_{\mu\in\afmbzn}\bigg({\mu\atop\la}\bigg)
[A+\diag(\mu)]_1\in\hsKq\\
A\br{\la,r}&=\sum_{\mu\in\afLa(n,r-\sg(A))}\bigg({\mu\atop\la}\bigg)
[A+\diag(\mu)]_1\in\afSrmbz.
\end{split}
\end{equation*}
Also, for $A\in\afMnz$, define
\begin{center}
$A\br\la=0$ and $A\br{\la,r}=0$ if $a_{i,j}<0$ for some $i\not=j$.
\end{center}
For $A\in\aftiThn$ let $\la(A)$ be the element in $\afmbzn$ such that $$\diag(\la(A))=A^0,$$
where $A^0$ is defined in \eqref{A^+,A^-,A^0}. Recall the map $\ti\ :\afThn\ra\afThn$ defined in \eqref{ti}.
\begin{Prop}\label{formula in Vz}
Let $A\in\afThnpm$, $B=\sum_{1\leq i\leq n}\al_i\afE_{i,i+1}$ and
$C=\sum_{1\leq i\leq n}\al_i\afE_{i+1,i}$ with $\al\in\afmbnn$. Let $\la,\mu\in\afmbnn$. The following identities holds in $\hsKq$:
\begin{equation*}
\begin{split}
(1)
0\br\mu A\br\la&=\sum_{\dt\in\afmbnn\atop\dt\leq\mu}\left(
\sum_{\bt\in\afmbnn\atop\bt\leq\mu-\dt,\,\bt\leq\la}
\left({\ro(A)\atop\mu-\bt-\dt}
\right)\left({\dt+\la\atop\bt,\dt,\la-\bt}\right)\right)A\br{\la+\dt}\\
(2) B\br\bfl A\br\la
&=\sum_{T\in\afThn,\,\dt\in\afmbnn\atop \ro(T)=\al,\,\dt\leq\la}
a(T,\dt)\prod_{1\leq i\leq n\atop j\in\mbz,\,j\not= i}\bigg({a_{i,j}-t_{i-1,j}+t_{i,j}\atop t_{i,j}}\bigg)
(A+T^\pm-\ti T^\pm)\br{\la(T)+\dt}
\end{split}
\end{equation*}
where $$a(T,\dt)=\sum_{\bt\in\afmbnn,\,\bt\leq\la(T),\atop\bt\leq\la-\dt}
\left({\la(\ti T)-\la(T)\atop\la-\bt-\dt}\right)
\left({\la(T)+\dt\atop\bt,\dt,\la(T)-\bt}\right)\in\mbz;$$
\begin{equation*}
\begin{split}
(3) C\br\bfl A\br\la
&=\sum_{T\in\afThn,\,\dt\in\afmbnn\atop \ro(T)=\al,\,\dt\leq\la}
b(T,\dt)\prod_{1\leq i\leq n\atop j\in\mbz,\,j\not= i}\bigg({a_{i,j}+t_{i-1,j}-t_{i,j}\atop t_{i-1,j}}\bigg)
(A-T^\pm+\ti T^\pm)\br{\la(\ti T)+\dt}
\end{split}
\end{equation*}
where $$b(T,\dt)=\sum_{\bt\in\afmbnn,\,\bt\leq\la(\ti T),\atop\bt\leq\la-\dt}
\left({\la(T)-\la(\ti T)\atop\la-\bt-\dt}\right)
\left({\la(\ti T)+\dt\atop\bt,\dt,\la(\ti T)-\bt}\right)\in\mbz.$$
The same formulas hold in $\afSrmbz$ with $A\{\la\}$ replaced by $A\{\la,r\}$.
\end{Prop}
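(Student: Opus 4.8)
The plan is to prove all three formulas in $\hsKq$ by unwinding the definition of $\br{-}$ and reducing everything to the multiplication formulas \ref{formula in K}(1),(2) together with one combinatorial identity; the $\afSrmbz$ versions then follow by re-running the same computations with \ref{funtmental multiplication formula} in place of \ref{formula in K}. The identity I would record first is that, for any $\kappa\in\afmbzn$ and any $\mu,\la\in\afmbnn$,
\[
\bpa{\kappa+\xi}{\mu}\bpa{\xi}{\la}=\sum_{\dt\in\afmbnn,\,\dt\le\mu}\Bigl(\,\sum_{\bt\in\afmbnn,\,\bt\le\mu-\dt,\,\bt\le\la}\bpa{\kappa}{\mu-\bt-\dt}\bpa{\dt+\la}{\bt,\dt,\la-\bt}\Bigr)\bpa{\xi}{\la+\dt}
\]
holds as a polynomial identity in $\xi\in\afmbzn$, with all coefficients in $\mbz$. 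I would derive it by expanding $\bpa{\kappa+\xi}{\mu}=\sum_{\rho\le\mu}\bpa{\kappa}{\mu-\rho}\bpa{\xi}{\rho}$ via \ref{combination formula2}(1), then expanding $\bpa{\xi}{\rho}\bpa{\xi}{\la}$ via \ref{combination formula2}(2), and finally reindexing $\bt=\ga$, $\dt=\rho-\ga$.

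For (1) I would use that in $\hsKq$ one has $[\diag(\nu)]_1\cdot[A+\diag(\xi)]_1=[A+\diag(\xi)]_1$ when $\nu=\ro(A)+\xi$ and $=0$ otherwise, so that the definitions immediately give $0\br\mu\,A\br\la=\sum_{\xi\in\afmbzn}\bpa{\ro(A)+\xi}{\mu}\bpa{\xi}{\la}[A+\diag(\xi)]_1$. Applying the identity above with $\kappa=\ro(A)$ and reading off the coefficient of $A\br{\la+\dt}=\sum_{\xi}\bpa{\xi}{\la+\dt}[A+\diag(\xi)]_1$ then gives (1) at once.

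For (2) (and (3) symmetrically) I would write $B\br\bfl=\sum_{\nu\in\afmbzn}[B+\diag(\nu)]_1$; in the product $[B+\diag(\nu)]_1\cdot[A+\diag(\xi)]_1$ only the summand with $\nu_i=\ro(A)_i+\xi_i-\al_{i-1}$ survives, and \ref{formula in K}(1) applies because $(B+\diag(\nu))-\sum_i\al_i\afE_{i,i+1}=\diag(\nu)$ is diagonal. Its output $[A+\diag(\xi)+T-\ti T]_1$ (with $\ro(T)=\al$) has off-diagonal part the $\xi$-independent matrix $A+T^\pm-\ti T^\pm$, where $X^\pm:=X^++X^-$ is the off-diagonal part, and diagonal part $\diag(\xi+\la(T)-\la(\ti T))$; after the substitution $\eta=\xi+\la(T)-\la(\ti T)$ the factor $\prod_i\bpa{\xi_i-t_{i-1,i}+t_{i,i}}{t_{i,i}}$ becomes $\bpa{\eta}{\la(T)}$ and $\bpa{\xi}{\la}$ becomes $\bpa{\eta+(\la(\ti T)-\la(T))}{\la}$. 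Applying the combinatorial identity with $\kappa=\la(\ti T)-\la(T)$ (the proposition's $\la$ playing the role of $\mu$, and $\la(T)$ that of $\la$) and collecting the coefficient of $(A+T^\pm-\ti T^\pm)\br{\la(T)+\dt}$ then produces exactly the stated $a(T,\dt)$. Finally I would observe that the summands of \ref{formula in K}(1) in which $a_{i,j}-t_{i-1,j}+t_{i,j}<0$ for some $i\ne j$ can be restored harmlessly, since for such $T$ the matrix $A+T^\pm-\ti T^\pm$ has a negative off-diagonal entry and hence $(A+T^\pm-\ti T^\pm)\br{-}=0$ by convention; this is why the final sum runs over all $T\in\afThn$ with $\ro(T)=\al$. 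Part (3) is handled identically with \ref{formula in K}(2), the roles of $\la(T)$ and $\la(\ti T)$ interchanged and $\kappa=\la(T)-\la(\ti T)$.

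For the $\afSrmbz$ statements I would repeat these computations with \ref{funtmental multiplication formula} in place of \ref{formula in K} and all summation indices restricted to the relevant $\afLa(n,r-\bullet)$; the only extra input needed is the bookkeeping identity $\sg(A+T^\pm-\ti T^\pm)+\sg(\eta)=\sg(A)+\sg(\xi)$, which makes the restricted sums match on both sides. I expect the main obstacle to be purely organisational: in (2) and (3) one has to keep careful track of which binomial factors depend on the free variable $\xi$, pick the substitution $\eta=\xi\pm(\la(T)-\la(\ti T))$ so that this dependence collapses precisely to the two-factor shape $\bpa{\kappa+\eta}{\la}\bpa{\eta}{\la(T)}$ that the combinatorial identity resolves, and verify that the splitting of $A+\diag(\xi)\pm(T-\ti T)$ into off-diagonal and diagonal parts is consistent both with the notation $A\br{-}$ and with the convention sending out-of-range matrices to $0$.
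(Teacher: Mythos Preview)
Your argument is correct and follows essentially the same route as the paper: expand the product into a sum over a free diagonal variable, apply the stabilized multiplication formulas of \ref{formula in K}, then reduce via \ref{combination formula2}(1) followed by \ref{combination formula2}(2) with the reindexing $\bt=\ga$, $\dt=\rho-\ga$. The only difference is organizational: you isolate the binomial identity $\bpa{\kappa+\xi}{\mu}\bpa{\xi}{\la}=\sum_{\dt}(\cdots)\bpa{\xi}{\la+\dt}$ once and then invoke it three times, whereas the paper performs that two-step expansion inline separately for (1) and for the $x_T$ in (2); your explicit remark about restoring the ``bad'' $T$ via the convention $(A+T^\pm-\ti T^\pm)\br{-}=0$ is a point the paper leaves implicit.
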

\begin{proof}
First, by \ref{combination formula2},
\begin{equation*}
\begin{split}
0\br\mu A\br\la&=
\sum_{\al\in\afmbzn}\bpa{\ro(A)+\al}{\mu}\bpa{\al}{\la}[A+\diag(\al)]_1\\
&=\sum_{\al\in\afmbzn,\,\bfj\in\afmbnn\atop\bfj\leq\mu}\bpa{\ro(A)}{\mu-\bfj}\bpa{\al}{\bfj}
\bpa{\al}{\la}[A+\diag(\al)]_1\\
&=\sum_{\al\in\afmbzn,\,\bfj,\bt\in\afmbnn\atop
\bt\leq\bfj\leq\mu,\,\bt\leq\la}\bpa{\ro(A)}{\mu-\bfj}
\bpa{\bfj+\la-\bt}{\bt,\bfj-\bt,\la-\bt} \bpa{\al}{\bfj+\la-\bt}[A+\diag(\al)]_1\\
&=\sum_{\bfj,\bt\in\afmbnn\atop
\bt\leq\bfj\leq\mu,\,\bt\leq\la}\bpa{\ro(A)}{\mu-\bfj}
\bpa{\la+\bfj-\bt}{\bt,\bfj-\bt,\la-\bt} A\br{\la+\bfj-\bt}\\
&=\sum_{\dt\in\afmbnn\atop\dt\leq\mu}\left(
\sum_{\bt\in\afmbnn\atop\bt\leq\mu-\dt,\,\bt\leq\la}
\left({\ro(A)\atop\mu-\bt-\dt}
\right)\left({\la+\dt\atop\bt,\dt,\la-\bt}\right)\right)A\br{\la+\dt},\\
\end{split}
\end{equation*}
proving (1). To prove (2), we conclude from \ref{formula in K} that
\begin{equation*}
\begin{split}
B\br\bfl A\br\la &=\sum_{\ga\in\afmbzn}\left({\ga\atop\la}\right)
\left[B+\diag\left(\ga+\ro(A)-\sum_{1\leq i\leq n}\al_i\afbse_{i+1}\right)\right]_1[A+\diag(\ga)]_1\\
&=\sum_{T\in\afThn \atop \ro(T)=\al }
\prod_{1\leq i\leq n\atop j\in\mbz,\,j\not= i}\bigg({a_{i,j}-t_{i-1,j}+t_{i,j}\atop t_{i,j}}\bigg)x_T
\end{split}
\end{equation*}
where
\begin{equation*}
\begin{split}
x_T&=
\sum_{\ga\in\afmbzn}\left({\ga\atop\la}\right)
\left({\ga-\la(\ti T)+\la(T)\atop\la(T)}\right)[A+T^\pm-\ti T^\pm+\diag(\ga-\la(\ti T)+\la(T))]_1.
\end{split}
\end{equation*}
Furthermore, by \ref{combination formula2} we have
\begin{equation*}
\begin{split}
x_T&=\sum_{\nu\in\afmbzn}\left({\nu-\la(T)+\la(\ti T)\atop\la}\right)
\left({\nu\atop\la(T)}\right)[A+T^\pm-\ti T^\pm+\diag(\nu)]_1\\
&=\sum_{\bfj\in\afmbnn,\,\bfj\leq\la\atop\nu\in\afmbzn}
\left({\la(\ti T)-\la(T)\atop\la-\bfj}\right)
\left({\nu\atop\bfj}\right)
\left({\nu\atop\la(T)}\right)[A+T^\pm-\ti T^\pm+\diag(\nu)]_1\hspace{100pt}
\\
\end{split}
\end{equation*}
\begin{equation*}
\begin{split}
&
=\sum_{\bfj,\bt\in\afmbnn\atop\bt\leq\bfj\leq\la,\;\bt\leq\la(T)} \left({\la(\ti T)-\la(T)\atop\la-\bfj}\right)
\left({\bfj+\la(T)-\bt\atop\bt,\,\bfj-\bt,\,\la(T)-\bt}\right)
\\
&\qquad\qquad\times\sum_{\nu\in\afmbzn}
\left({\nu\atop\bfj+\la(T)-\bt}\right)[A+T^\pm-\ti T^\pm+\diag(\nu)]_1\hspace{130pt}\\
\end{split}
\end{equation*}
\begin{equation*}
\begin{split}
&
=\sum_{\bfj,\bt\in\afmbnn\atop\bt\leq\bfj\leq\la,\;\bt\leq\la(T)} \left({\la(\ti T)-\la(T)\atop\la-\bfj}\right)
\left({\bfj-\bt+\la(T)\atop\bt,\,\bfj-\bt,\,\la(T)-\bt}\right)(A+T^\pm-\ti T^\pm)\br{\bfj-\bt+\la(T)}
\\
&=\sum_{\dt\in\afmbnn,\,\dt\leq\la} a(T,\dt)(A+T^\pm-\ti T^\pm)\br{\dt+\la(T)}.
\end{split}
\end{equation*}
Therefore, (2) holds. Formula (3) is proved similarly.
\end{proof}

\section{The algebra $\Vz$}
We shall denote by $\Vz$  the $\mbz$-submodule of $\hsKq$ spanned by
$$\frak{\Bz}:=\{A\br\la\mid A\in\afThnpm,\,\la\in\afmbnn\}.$$
We will prove that $\Vz$ is actually a $\mbz$-subalgebra of $\hsKq$ in \ref{Vz-subalgebra}.

\begin{Lem}\label{basis-Vz}
The set $\frak{\Bz}$ forms a $\mbz$-basis for $\Vz$.
\end{Lem}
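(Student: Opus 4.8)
The set $\frak{\Bz}$ spans $\Vz$ by the very definition of $\Vz$, so the content of the lemma is the $\mbz$-linear independence of $\frak{\Bz}$, and I will in fact establish $\mbq$-linear independence. The plan is to expand everything in the ``coordinate'' family $\{[B]_1\mid B\in\aftiThn\}$ of $\hsKq$, split a hypothetical relation according to the off-diagonal part of the index, and then reduce to the classical fact that products of binomial-coefficient polynomials are linearly independent.

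First I would recall that $\hsKq$ consists of formal sums $\sum_{B\in\aftiThn}\beta_B[B]_1$ subject to the row/column finiteness condition, so that each coefficient functional $\sum_B\beta_B[B]_1\mapsto\beta_B$ is well defined and a formal sum is $0$ precisely when all its coefficients vanish. For $A\in\afThnpm$ and $\mu\in\afmbzn$ the matrix $A+\diag(\mu)$ lies in $\aftiThn$ (its off-diagonal entries coincide with those of $A$, hence are nonnegative), and $A$ is recovered from $A+\diag(\mu)$ as its off-diagonal part; thus for distinct $A,A'\in\afThnpm$ the families $\{[A+\diag(\mu)]_1\}_{\mu\in\afmbzn}$ and $\{[A'+\diag(\mu)]_1\}_{\mu\in\afmbzn}$ involve disjoint subsets of the coordinate family. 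Hence in any finite relation $\sum_{A,\la}c_{A,\la}\,A\br\la=0$ with $c_{A,\la}\in\mbq$, the partial sum over each fixed $A$ already vanishes, and it suffices to prove: for fixed $A\in\afThnpm$ the elements $A\br\la$, $\la\in\afmbnn$, are $\mbq$-linearly independent.

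Next, fix $A$ and suppose $\sum_{\la}c_\la\,A\br\la=0$. Using $A\br\la=\sum_{\mu\in\afmbzn}\binom{\mu}{\la}[A+\diag(\mu)]_1$ and extracting the coefficient of $[A+\diag(\mu)]_1$ yields $\sum_\la c_\la\binom{\mu}{\la}=0$ for every $\mu\in\afmbzn$. Identifying $\afmbzn$ with $\mbz^n$ via $\flat$ and writing $\binom{\mu}{\la}=\prod_{1\le i\le n}\binom{\mu_i}{\la_i}$, the map $\mu\mapsto\binom{\mu}{\la}$ is the evaluation at $\mu\in\mbz^n$ of the polynomial $p_\la:=\prod_{1\le i\le n}\binom{x_i}{\la_i}\in\mbq[x_1,\dots,x_n]$. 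Since $\binom{x_i}{\la_i}$ has degree $\la_i$ with leading coefficient $1/\la_i!$, the family $\{p_\la\mid\la\in\mbn^n\}$ is triangular with respect to the monomial basis $\{\prod_i x_i^{\la_i}\}$ and is therefore a $\mbq$-basis of $\mbq[x_1,\dots,x_n]$; in particular it is linearly independent. The polynomial $\sum_\la c_\la p_\la$ vanishes at every point of $\mbz^n$, hence is the zero polynomial, so $c_\la=0$ for all $\la$. Together with the spanning property this shows that $\frak{\Bz}$ is a $\mbz$-basis of $\Vz$.

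The argument carries no real difficulty; the only point demanding a little care is the bookkeeping observation that distinct off-diagonal parts $A$ contribute disjoint supports in $\hsKq$, which is what legitimises splitting a relation off-diagonal-part by off-diagonal-part before invoking the polynomial independence.
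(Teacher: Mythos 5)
Your proof is correct and its reduction is identical to the paper's: expand each $A\br\la$ in the family $\{[B]_1\mid B\in\aftiThn\}$, observe that distinct off-diagonal parts $A$ contribute disjoint supports, and thereby reduce to showing that a finitely supported relation $\sum_{\la\in\afmbnn}c_\la\bpa{\mu}{\la}=0$ for all $\mu\in\afmbzn$ forces $c_\la=0$. You settle this last point via the linear independence of the binomial-coefficient polynomials $p_\la=\prod_{1\le i\le n}\bpa{x_i}{\la_i}$ in $\mbq[x_1,\ldots,x_n]$, together with the fact that a polynomial vanishing on all of $\mbz^n$ is the zero polynomial. The paper is slightly more direct: it picks the lexicographically minimal $\nu$ in the support of $(c_\la)$ and simply evaluates at $\mu=\nu$, noting that $\bpa{\nu}{\la}\neq 0$ forces $\la\leq\nu$ coordinatewise, hence $\la\leq_{\mathrm{lex}}\nu$, hence $\la=\nu$ by minimality, so the relation at $\mu=\nu$ collapses to $c_\nu=0$, a contradiction. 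Both arguments are standard and equally valid; the paper's is a touch more elementary since it avoids the polynomial-ring machinery (in particular the Zariski-density fact), while yours is a bit more conceptual in pointing to the standard basis $\{p_\la\}$ of $\mbq[x_1,\ldots,x_n]$.
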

\begin{proof}
It is enough to prove the linear independence of $\Bz$.
Suppose $$\sum_{A\in\afThnpm\atop\la\in\afmbnn}k_{A,\la} A\br\la=0,$$
for some $k_{A,\la}\in\mbz$. Then
$$0=
\sum_{A\in\afThnpm\atop\la\in\afmbnn}k_{A,\la} A\br\la
=\sum_{A\in\afThnpm\atop\mu\in\afmbnn}
\left(
\sum_{\la\in\afmbnn}k_{A,\la}
\bigg({\mu\atop\la}\bigg)\right)[A+\diag(\mu)]$$
for some $k_{A,\la}\in\mbz$. Thus,
$\sum_{\la\in\afmbnn}k_{A,\la}
\left({\mu\atop\la}\right)=0,$
for any $A\in\afThnpm$, $\mu\in\afmbzn$.
We want to show that $k_{A,\la}=0$ for all $A,\la$. If this is not the case, then there exist $B\in\afThnpm$ such that $X_B:=\{\la\in\afmbnn\mid k_{B,\la}\not=0\}\not=\varnothing$. Let $\nu$ be the minimal element in $X_B$ with respect to the lexicographic order.  Then
$$k_{B,\nu}=\sum_{\la\in X_B\atop\nu\geq\la}k_{B,\la}
\left({\nu\atop\la}\right)=\sum_{\la\in\afmbnn}k_{B,\la}
\left({\nu\atop\la}\right)=0$$
since $\nu$ is minimal. This is a contradiction.
\end{proof}

Let $\Vzp=\spann_\mbz\{A\br\bfl\mid A\in\afThnp\}$,
$\Vzm=\spann_\mbz\{A\br\bfl\mid A\in\afThnm\}$ and $\Vzz=\spann_\mbz\{0\br\la\mid \la\in\afmbnn\}$.
By \ref{formula in Vz}(1),
$\Vzz$ is a $\mbz$-subalgebra of $\hsKq$.
Also, we will see in \ref{Vzp-Vzm} that $\Vzp$ and $\Vzm$ are  $\mbz$-subalgebras of $\hsKq$.

The maps $\tau_r$ defined in \eqref{taur} induce an algebra anti-automorphism
$$\tau:\sK\ra\sK\quad([A]_1\ra [\tA]_1).$$
Consequently, we get an algebra anti-automorphism
\begin{equation}\label{htau}
\h\tau:\hsKq\ra\hsKq
\end{equation}
defined by sending $\sum_{A}\beta_A[A]_1$ to $\sum_{A}\beta_A[\tA]_1$. Clearly, $\h\tau(A\br\la)=(\tA)\br\la$ for $A\in\afThnpm$ and $\la\in\afmbnn$. Thus,
\begin{equation}\label{tau(Vzp)}
\Vzm=\h\tau(\Vzp).
\end{equation}

\begin{Lem}\label{Vzp-Vzm}
$(1)$ $\Vzp$ (resp., $\Vzm$) is a $\mbz$-subalgebras of $\hsKq$
and the linear map $\thp:\Hallz\ra\Vzp$ (resp., $\thm:\Hallzop\ra\Vzm$) taking $u_{A,1}\ra A\br\bfl$ (resp., $u_{A,1}\ra (\tA)\br\bfl$) for $A\in\afThnp$ is an algebra isomorphism.

$(2)$ $\Vzp$ (resp., $\Vzm$) is generated by $\sum_{\al\in\afmbnn}\al_i\afE_{i,i+1}\br\bfl$ (resp., $\sum_{\al\in\afmbnn}\al_i\afE_{i+1,i}\br\bfl$) for $\al\in\afmbnn$ as a $\mbz$-algebra.
\end{Lem}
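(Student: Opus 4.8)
The plan is to realise $\thp$ (resp.\ $\thm$) as the ``stable limit'' of the Ringel--Hall homomorphisms $\etarp$ (resp.\ $\etarm$) from \ref{afzrpm}, using evaluation maps from the completion algebra $\hsKq$ down to the affine Schur algebras $\afSrmbz$. The starting observation is that $\thp$ is already a $\mbz$-module isomorphism onto $\Vzp$: the set $\{u_{A,1}\mid A\in\afThnp\}$ is a $\mbz$-basis of $\Hallz$, while $\{A\br\bfl\mid A\in\afThnp\}$ is a sub-family of the basis $\frak{\Bz}$ of $\Vz$ furnished by \ref{basis-Vz}, hence a $\mbz$-basis of $\Vzp$; likewise, since $\h\tau(A\br\bfl)=(\tA)\br\bfl$ (see \eqref{tau(Vzp)}), $\thm$ is a $\mbz$-module isomorphism onto $\Vzm$. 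So the real content of (1) is multiplicativity.

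For each $r\ge 0$ I would introduce the $\mbq$-linear evaluation map $\Phi_r\colon\hsKq\to\afSrmbq$, $\sum_{A}\beta_A[A]_1\mapsto\sum_{A\in\afThnr}\beta_A[A]_1$. This is well defined: a matrix in $\afThnr$ has row sums bounded by $r$, so only finitely many of the $A$ with $\beta_A\ne0$ lie in $\afThnr$, by the finiteness conditions defining $\hsKq$. Since multiplication in $\hsKq$ is computed termwise inside $\sK$ and the restriction of $\Phi_r$ to $\sK$ is the algebra homomorphism $\dzr$ of \ref{dzr} (tensored with $\mbq$), each $\Phi_r$ is an algebra homomorphism. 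By construction $\Phi_r(A\br\la)=A\br{\la,r}$ for $A\in\afThnpm$, $\la\in\afmbnn$; in particular, using \ref{afzrpm} together with the fact that $\ti u_A\ot 1=u_{A,1}$ after specialising $\up$ to $1$, one gets $\Phi_r\circ\thp=\etarp\ot\mbz$ as $\mbz$-linear maps $\Hallz\to\afSrmbz$. Finally, $\bigoplus_{r}\Phi_r$ separates the points of $\Vz$: if $\xi=\sum_{A,\la}k_{A,\la}A\br\la\in\Vz$ satisfies $\Phi_r(\xi)=0$ for all $r$, then comparing coefficients of $[A_0+\diag(\mu)]_1$ for each $A_0\in\afThnpm$ and each $\mu\in\afmbnn$ (with $r$ large) gives $\sum_\la k_{A_0,\la}\binom{\mu}{\la}=0$ for all $\mu$, whence $k_{A_0,\la}=0$ by the lexicographic-minimality argument used in the proof of \ref{basis-Vz}.

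Now fix $A\in\afThnp$, $\al\in\afmbnn$, and put $B=\sum_{1\le i\le n}\al_i\afE_{i,i+1}\in\afThnp$. By \ref{formula in Vz}(2) the product $B\br\bfl\,A\br\bfl$ lies in $\Vz$. Applying $\Phi_r$, and using that $\Phi_r$ is multiplicative, that $\Phi_r\circ\thp=\etarp\ot\mbz$, that $\etarp$ is an algebra homomorphism, and that $u_{B,1}u_{A,1}=\sum_{C\in\afThnp}\vi^{C}_{B,A}(1)u_{C,1}$ is the Ringel--Hall product at $\up=1$, one obtains
\[
\Phi_r\big(B\br\bfl\,A\br\bfl\big)=(\etarp\ot\mbz)(u_{B,1}u_{A,1})=\sum_{C\in\afThnp}\vi^{C}_{B,A}(1)\,(\etarp\ot\mbz)(u_{C,1})=\Phi_r\big(\thp(u_{B,1}u_{A,1})\big)
\]
for every $r$. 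Since $B\br\bfl\,A\br\bfl$ and $\thp(u_{B,1}u_{A,1})=\sum_C\vi^{C}_{B,A}(1)\,C\br\bfl$ both lie in $\Vz$ and $\bigoplus_r\Phi_r$ is injective there, $B\br\bfl\,A\br\bfl=\thp(u_{B,1}u_{A,1})$. By $\mbz$-linearity this gives $\thp(u_{B,1}\,\xi)=B\br\bfl\cdot\thp(\xi)$ for all $\xi\in\Hallz$; since $\Hallz$ is generated as a $\mbz$-algebra by the $u_{\la,1}=u_{A_\la,1}$ with $A_\la=\sum_{1\le i\le n}\la_i\afE_{i,i+1}$ and $\la\in\afmbnn$ (by \ref{tri Hall}), iterating this identity shows that $\thp$ is an algebra homomorphism. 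Hence $\Vzp=\thp(\Hallz)$ is a $\mbz$-subalgebra of $\hsKq$ isomorphic to $\Hallz$, and it is generated by the images of the generators of $\Hallz$, i.e.\ by the elements $\big(\sum_{1\le i\le n}\al_i\afE_{i,i+1}\big)\br\bfl$, $\al\in\afmbnn$; this proves (1) and (2) for $\Vzp$.

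For the negative part, the anti-automorphism $\h\tau$ of \eqref{htau} carries $A\br\bfl$ to $(\tA)\br\bfl$, hence restricts to a bijection $\Vzp\to\Vzm$ (see \eqref{tau(Vzp)}), so $\h\tau\circ\thp\colon\Hallz\to\Vzm$ is an algebra anti-isomorphism, i.e.\ an algebra isomorphism $\Hallzop\to\Vzm$ sending $u_{A,1}$ to $(\tA)\br\bfl$; this is exactly $\thm$. Consequently $\Vzm$ is a $\mbz$-subalgebra of $\hsKq$, generated by $\big(\sum_{1\le i\le n}\al_i\afE_{i+1,i}\big)\br\bfl$, $\al\in\afmbnn$, which completes the proof. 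The step I expect to need the most care is the construction of the evaluation maps $\Phi_r$ and the verification that $\bigoplus_r\Phi_r$ separates the points of $\Vz$ while each $\Phi_r$ is a genuine algebra homomorphism: this is where the finiteness structure of $\hsKq$, the homomorphism property of $\dzr$, and the Vandermonde argument from \ref{basis-Vz} must be fitted together; once \ref{formula in Vz}(2) is invoked to keep the relevant products inside $\Vz$, the rest is formal.
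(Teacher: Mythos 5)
Your proposal is correct and uses the same overall framework as the paper: realize $\thp$ via the evaluation maps to the affine Schur algebras $\afSrmbz$ and transport the homomorphism property of the maps $\etarp$ from \ref{afzrpm}. Your $\Phi_r$ is exactly the map $\hzr$ of \eqref{hzr} that the paper uses, and the identity $\Phi_r\circ\thp=\etarp\ot\mbz$ is likewise the paper's mechanism. The one place you diverge, and it is the better route, is in how you establish that $\Vzp$ is closed under multiplication. The paper assembles the bijections $\hz\colon\Vzp\to\tiVzp$ and $\etap\colon\Hallz\to\tiVzp$ and asserts that this ``implies $\Vzp$ is a subalgebra''; taken literally this does not follow, since $\hz$ is not globally injective on $\hsKq$, so a priori a product of two elements of $\Vzp$ could leave $\Vzp$ while still mapping into $\tiVzp$. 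You close this gap explicitly: you invoke \ref{formula in Vz}(2) to keep $B\br\bfl\,A\br\bfl$ inside the larger space $\Vz$, and then use injectivity of $\bigoplus_r\Phi_r$ on $\Vz$ (which you prove via the Vandermonde/lex-minimality argument from \ref{basis-Vz}) to identify the product with $\thp(u_{B,1}u_{A,1})$. That supplies the missing ingredient, after which the reduction to the generators $u_{\la,1}$ and the passage to $\Vzm$ via $\h\tau$ match the paper exactly. Two tiny clean-ups: where you write ``(with $r$ large)'' it would be clearer to say ``with $r=\sg(A_0)+\sg(\mu)$,'' since $r$ is determined by the pair $(A_0,\mu)$ rather than being taken uniformly large; and you should remark that the algebra structure on $\hsKq$ really is determined termwise by the product in $\sK$ so that $\Phi_r$ inherits multiplicativity from $\dzr$ (this is implicit in the paper's definition of $\hsKq$, but worth a sentence).
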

\begin{proof}
Statement (2) follows from (1) and \ref{tri Hall}. We now prove (1).
Let $\tiVzp$ be the $\mbz$-submodule of $\prod_{r\geq 0}\afSrmbq$ spanned by the elements $(A\br{\bfl,r})_{r\geq 0}$ for $A\in\afThnp\}$.
Since the elements $(A\br{\bfl,r})_{r\geq 0}$ ($A\in\afThnp$) are linearly independent,
the map $\etarp$ defined in \ref{afzrpm} induce an injective algebra homomorphism $$\etap=\prod_{r\geq 0}\etarp:\Hallq\ra\prod_{r\geq 0}\afSrmbq.$$
Thus $\tiVzp=\etap(\Hallz)$ is a $\mbz$-subalgebra of $\prod_{r\geq 0}\afSrmbq$ and the restriction of $\etap$ to $\Hallz$ induces a $\mbz$-algebra isomorphism
\begin{equation}\label{etap}
\etap:\Hallz\tong\tiVzp.
\end{equation}

On the other hand, the map $\dzr$ defined in \ref{dzr} induces a surjective algebra homomorphism
\begin{equation}\label{hzr}
\hzr:\hsKq\ra\afSrmbq
\end{equation}
sending $\sum_{A\in\aftiThn}\bt_A[A]_1$ to $\sum_{A\in\aftiThn}\bt_A\dzr([A]_1)$.
Consequently, we get an algebra homomorphism
\begin{equation}
\hz=\prod_{r\geq 0}\hzr:\hsKq\ra\prod_{r\geq 0}\afSrmbq.
\end{equation}
Since $\hz(A\br\bfl)=(A\br{\bfl,r})_{r\geq 0}$ for $A\in\afThnp$
and the elements $(A\br{\bfl,r})_{r\geq 0}$ ($A\in\afThnp$) are linearly independent,
the restriction of $\hz$ to $\Vzp$ is injective and hence
we get a bijective map  $$\hz:\Vzp\ra\tiVzp.$$
This, together with \eqref{etap}, implies that $\Vzp$ is a subalgebra  of $\hsKq$ and $\thp=\hz^{-1}\circ\etap$ is an algebra isomorphism.
Finally, using \eqref{tau(Vzp)}, we get the similar result for $\Vzm$.
\end{proof}

Recall the notation $A_\bsa$ and $B_\bsa$ introduced in \eqref{Aa}.
For $w=\bsa_1\bsa_2\cdots\bsa_m\in\ti\Sg$ with the tight form
$\bsb_1^{x_1}\bsb_2^{x_2}\cdots\bsb_t^{x_t}$ we let
\begin{equation*}
\begin{split}
\ttn_{(w)}^+&=A_{x_1\bsb_1}\br\bfl A_{x_2\bsb_2}\br\bfl\cdots
A_{x_t\bsb_t}\br\bfl\in\hsKq ,\\
\ttn_{(w)}^-&=B_{x_1\bsb_1}\br\bfl B_{x_2\bsb_2}\br\bfl\cdots
B_{x_t\bsb_t}\br\bfl\in\hsKq.
\end{split}
\end{equation*}
The triangular relation for affine Schur algebras can be lifted to the level of $\hsKq$ as follows.
\begin{Lem}\label{tri-Vz}
Let $A\in\afThnpm$ and $\la\in\afmbnn$.

$(1)$ We have
$$A^+\br\bfl 0\br\la A^-\br\bfl=A\br\la+
\sum_{\bfj\in\afmbnn\atop\bfj<\la}
\left({\bfsg(A)\atop\la-\bfj}\right)A\br\bfj+f$$
where $f$ is a $\mbz$-linear combination of $B\br\nu$ such that $B\in\afThnpm$, $B\p A$ and $\nu\in\afmbnn$.

$(2)$ There exist $w_{A^+},w_{A^-}\in\ti\Sg$ such that $\wp(w_{A^+})=A^+$, $\wp(w_{A^-})=A^-$ and
$$\ttn^+_{(w_{A^+})}0\br\la\ttn^-_{(w_{A^-})}=A\br\la+
\sum_{\bfj\in\afmbnn\atop\bfj<\la}
\left({\bfsg(A)\atop\la-\bfj}\right)A\br\bfj+g$$
where $g$ is a $\mbz$-linear combination of $B\br\nu$ such that $B\in\afThnpm$, $B\p A$ and $\nu\in\afmbnn$.
\end{Lem}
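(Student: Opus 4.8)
The plan is to establish Part (1) first and then deduce Part (2) from it. A recurring tool is the family of algebra homomorphisms $\hzr$ of \eqref{hzr}, which satisfy $\hzr(C\br\nu)=C\br{\nu,r}$ and record precisely the $\afThnr$-part of an element of $\hsKq$; the product map $\hz=\prod_{r\ge 0}\hzr$ is injective on $\Vz$, by the integer-valued-polynomial argument already used to prove \ref{basis-Vz} (a $\mbz$-combination $\sum k_{C,\nu}C\br\nu$ annihilated by every $\hzr$ forces each polynomial $\mu\mapsto\sum_\nu k_{C,\nu}\bpa{\mu}{\nu}$ to vanish on $\afmbnn$, hence $k_{C,\nu}=0$).

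For Part (1), the first step is to prove the corresponding identity inside $\afSrmbz$. Multiplying the relation of \ref{tri-affine Schur algebras}, $A^+(\bfl,r)[\diag(\la')]A^-(\bfl,r)=[A+\diag(\la'-\bfsg(A))]+\text{lower terms}$, by $\bpa{\la'}{\la}$ and summing over $\la'\in\afLanr$ turns the left side into $A^+(\bfl,r)\,0\br{\la,r}\,A^-(\bfl,r)$. On the right, the substitution $\mu=\la'-\bfsg(A)$ (using $\sg(\bfsg(A))=\sg(A)$ and the vanishing convention for matrices with a negative diagonal entry, which makes the effective range $\afLa(n,r-\sg(A))$) followed by the Vandermonde identity \ref{combination formula2}(1) collapses the ``diagonal'' contribution to $\sum_{\eta\le\la}\bpa{\bfsg(A)}{\la-\eta}A\br{\eta,r}=A\br{\la,r}+\sum_{\bfj<\la}\bpa{\bfsg(A)}{\la-\bfj}A\br{\bfj,r}$, while the rest is a $\mbz$-combination of lower basis elements $[A']$ whose off-diagonal part is $\p A$; re-expanding each such $[A']$ in the $\mbz$-basis $\{B\br{\nu,r}\}$ of $\afSrmbz$ (obtained from \ref{tri-affine Schur algebras} by a unitriangularity argument) rewrites these as a $\mbz$-combination of $B\br{\nu,r}$ with $B\p A$.

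The second step is to see that $A^+\br\bfl\,0\br\la\,A^-\br\bfl$ is itself a $\mbz$-combination of the $C\br\eta$: by \ref{formula in Vz}(1), $0\br\la\,A^-\br\bfl=\sum_{\dt\le\la}\bpa{\ro(A^-)}{\la-\dt}A^-\br\dt$, and, writing $A^+\br\bfl$ via the inverted triangular relation of \ref{tri Hall}/\ref{Vzp-Vzm}(1) as a combination of products of the upper-semisimple generators $(\sum_{1\le i\le n}\al_i\afE_{i,i+1})\br\bfl$, repeated application of \ref{formula in Vz}(2) (the off-diagonal vanishing conventions keeping the sums under control) expresses the product in the stated form. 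The third step is to apply $\hzr$ to this element, compare with the $\afSrmbz$-identity of step one for all $r$, and invoke the injectivity of $\hz$ (applied per fixed matrix $C$): this forces the coefficient of $A\br\la$ to be $1$, those of $A\br\bfj$ ($\bfj<\la$) to be $\bpa{\bfsg(A)}{\la-\bfj}$, and all remaining terms to be $B\br\nu$ with $B\p A$; integrality of all coefficients is automatic since $A^+\br\bfl$, $0\br\la$, $A^-\br\bfl$ have integer coefficients and $\sK$ has integral structure constants. This proves Part (1).

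For Part (2), choose $w_{A^+},w_{A^-}$ as in \ref{tri Hall}, so $\wp^+(w_{A^+})=A^+$ and $\wp^-(w_{A^-})=A^-$. Transporting the triangular relation of \ref{tri Hall} through $\thp,\thm$ of \ref{Vzp-Vzm}(1) gives $\ttn^+_{(w_{A^+})}=A^+\br\bfl+\sum_{B\p A^+}(\cdots)B\br\bfl$ and $\ttn^-_{(w_{A^-})}=A^-\br\bfl+\sum_{C\p A^-}(\cdots)C\br\bfl$ with integer coefficients. In $\ttn^+_{(w_{A^+})}\,0\br\la\,\ttn^-_{(w_{A^-})}$ the top term is $A^+\br\bfl\,0\br\la\,A^-\br\bfl$, governed by Part (1); every other term is $B\br\bfl\,0\br\la\,C\br\bfl$ with $B\in\afThnp$, $C\in\afThnm$ and $B\p A^+$ or $C\p A^-$, hence $B+C\p A$ by additivity of the functions $\sg_{i,j}$, so Part (1) applied to $B+C$ makes it a $\mbz$-combination of $B'\br\nu$ with $B'\p A$. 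Summing yields Part (2). The step I expect to be the real obstacle is the coordination of steps one through three of Part (1): verifying that the $\afSrmbz$-computation and the lift to $\hsKq$ fit together — in particular controlling which terms $C\br\eta$ actually occur (and that the expansions involved are genuinely finite and well-defined), and the bookkeeping that re-expresses the ``lower terms'' of \ref{tri-affine Schur algebras} as $B\br{\nu,r}$ indexed by matrices $B\p A$.
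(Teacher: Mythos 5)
Your proof is correct and uses essentially the same ingredients as the paper's: \ref{tri-affine Schur algebras}, the Vandermonde identity \ref{combination formula2}(1), and the closure of $\Vz$ under the multiplications of \ref{formula in Vz} and \ref{Vzp-Vzm} to pin down the lower terms, with Part~(2) reduced to Part~(1) via \ref{tri Hall} exactly as the paper indicates. The only organizational difference is that you derive the identity in $\afSrmbz$ for each $r$ and then lift to $\hsKq$ at the end via injectivity of $\hz$ on $\Vz$, whereas the paper first transports \ref{tri-affine Schur algebras} to $\hsKq$ once and for all using \ref{stabilization property} and then performs the same computation there; this is a harmless permutation of the same steps, not a different strategy.
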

\begin{proof}

By \ref{tri-affine Schur algebras} and \ref{stabilization property} for any $\mu\in\afmbzn$ we have
\begin{equation}
A^+\br\bfl[\diag(\mu)]A^-\br\bfl=[A+\diag(\mu-\bfsg(A))]+f_\mu
\end{equation}
where $f_\mu$ is a $\mbz$-linear combination of $[B]$ such that $B\p A$
and $\co(B)=\co(A)+\mu-\bfsg(A)$ and $\ro(B)=\ro(A)+\mu-\bfsg(A)$. This equality together with \ref{combination formula2} implies that
\begin{equation*}
\begin{split}
A^+\br\bfl 0\br\la A^-\br\bfl&=\sum_{\mu\in\afmbzn}\left({\mu\atop\la}\right)
([A+\diag(\mu-\bfsg(A))]+f_\mu)\\
&=\sum_{\nu\in\afmbzn}\left({\nu+\bfsg(A)\atop\la}\right)
[A+\diag(\nu)]+f\\
&=\sum_{\nu\in\afmbzn}\sum_{\bfj\in\afmbnn\atop\bfj\leq\la}
\left({\nu\atop\bfj}\right)
\left({\bfsg(A)\atop\la-\bfj}\right)
[A+\diag(\nu)]+f\\
&=\sum_{\bfj\in\afmbnn\atop\bfj\leq\la}
\left({\bfsg(A)\atop\la-\bfj}\right)
A\br\bfj+f
\end{split}
\end{equation*}
where $f=\sum_{\mu\in\afmbzn}\left({\mu\atop\la}\right)f_\mu$. By \ref{formula in Vz} and \ref{Vzp-Vzm}(2), $f$ must be a $\mbz$-linear combination of $B\br\nu$ for various $B\in\afThnpm$ such that $B\p A$ and various $\nu\in\afmbnn$. This proves (1). The assertion (2) follows from (1), \ref{tri Hall} and \ref{Vzp-Vzm}(1).
\end{proof}

\begin{Coro}\label{tri decomposition}
We have $\Vz=\Vzp\Vzz\Vzm\cong\Vzp\ot\Vzz\ot\Vzm$.
\end{Coro}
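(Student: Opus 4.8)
The plan is to establish the three ingredients of the statement $\Vz=\Vzp\Vzz\Vzm$ and $\Vz\cong\Vzp\ot\Vzz\ot\Vzm$ in turn, using the triangular relation of \ref{tri-Vz} as the key tool. First I would observe that the right-hand side $\Vzp\Vzz\Vzm$ is by definition the $\mbz$-span of all products $(A^+\br\bfl)(0\br\la)(A^-\br\bfl)$ with $A^+\in\afThnp$, $A^-\in\afThnm$, $\la\in\afmbnn$; here I use that $\Vzp$, $\Vzz$, $\Vzm$ are $\mbz$-subalgebras of $\hsKq$ (by \ref{formula in Vz}(1) and \ref{Vzp-Vzm}(1)) together with the fact, from \ref{Vzp-Vzm}(1), that $\Vzp$ has $\mbz$-basis $\{A\br\bfl\mid A\in\afThnp\}$ and similarly for $\Vzm$, while $\Vzz$ is spanned by $\{0\br\la\mid\la\in\afmbnn\}$.

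Next, to show $\Vzp\Vzz\Vzm\sse\Vz$: by \ref{tri-Vz}(1), for any $A\in\afThnpm$ and $\la\in\afmbnn$ one has
\begin{equation*}
A^+\br\bfl\,0\br\la\,A^-\br\bfl=A\br\la+\sum_{\bfj\in\afmbnn,\,\bfj<\la}\left({\bfsg(A)\atop\la-\bfj}\right)A\br\bfj+f,
\end{equation*}
where $f$ is a $\mbz$-linear combination of $B\br\nu$ with $B\p A$ in $\afThnpm$ and $\nu\in\afmbnn$; all terms on the right lie in $\Vz$ by definition of $\Bz$, so the left side does too. For the reverse inclusion $\Vz\sse\Vzp\Vzz\Vzm$ I would argue by induction on $A\in\afThnpm$ with respect to the order $\p$ (noting that $\afThnpm$ with fixed $\ro,\co$ is a finite poset for this order, as used implicitly throughout): given $A\br\la$, the displayed identity shows
\begin{equation*}
A\br\la=A^+\br\bfl\,0\br\la\,A^-\br\bfl-\sum_{\bfj<\la}\left({\bfsg(A)\atop\la-\bfj}\right)A\br\bfj-f,
\end{equation*}
and the right-hand side is in $\Vzp\Vzz\Vzm$ by the induction hypothesis applied to the lower terms $A\br\bfj$ (same $A$, smaller $\la$, handled by a secondary induction on $\la$ in lexicographic order) and to the terms $B\br\nu$ with $B\p A$ in $f$. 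Hence $\Vz=\Vzp\Vzz\Vzm$, and in particular $\Vz$ is a $\mbz$-subalgebra of $\hsKq$, which is the content of \ref{Vz-subalgebra}.

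Finally, for the tensor-product decomposition I would show the multiplication map $\Vzp\ot_\mbz\Vzz\ot_\mbz\Vzm\to\Vz$ is a $\mbz$-module isomorphism. Surjectivity is just $\Vz=\Vzp\Vzz\Vzm$. For injectivity, it suffices to exhibit enough linearly independent images: the source is free over $\mbz$ with basis $\{A^+\br\bfl\ot 0\br\la\ot A^-\br\bfl\}$ indexed by $(A^+,\la,A^-)\in\afThnp\times\afmbnn\times\afThnm$, and by the triangular identity above each such basis element maps to $A\br\la$ plus a $\mbz$-combination of $A\br\bfj$ ($\bfj<\la$) and $B\br\nu$ ($B\p A$), where $A=A^++\diag(\la(A))+A^-$ ranges over a set in bijection with $\afThnpm\times\afmbnn$. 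Since $\Bz=\{A\br\la\}$ is a $\mbz$-basis of $\Vz$ by \ref{basis-Vz}, the change-of-basis matrix between $\{A^+\br\bfl\,0\br\la\,A^-\br\bfl\}$ and $\{A\br\la\}$ is unitriangular with respect to the partial order refining ``$A'\p A$, or $A'=A$ and $\la'\le\la$'', hence invertible over $\mbz$; therefore the multiplication map sends a basis to a basis and is an isomorphism. The main obstacle I anticipate is the bookkeeping needed to make the double induction (on $\p$ for the matrix part and lexicographically on $\afmbnn$ for the diagonal shift) genuinely well-founded and to verify that the combined ordering is compatible so that the transition matrix is truly triangular; once that poset structure is pinned down, everything else is a direct consequence of \ref{tri-Vz}, \ref{basis-Vz}, and \ref{Vzp-Vzm}.
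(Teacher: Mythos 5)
Your proof is essentially the paper's own argument (which cites \ref{tri-Vz}(1) and \ref{basis-Vz}) spelled out in full detail, and the logic for both the equality $\Vz=\Vzp\Vzz\Vzm$ and the unitriangularity giving the tensor decomposition is sound. One small but genuine error in your side remark: the equality $\Vz=\Vzp\Vzz\Vzm$ does \emph{not} by itself imply that $\Vz$ is closed under multiplication — a PBW-type factorization of a subspace into three subalgebras says nothing about $\Vz\cdot\Vz$ unless one has commutation rules to push factors of $\Vzm$ back past $\Vzp$; this is precisely why the paper proves \ref{Vz-subalgebra} separately afterwards, using the multiplication formulas of \ref{formula in Vz} together with \ref{tri-Vz}(2), rather than deducing it from the present corollary.
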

\begin{proof}
Clearly, \ref{tri-Vz}(1) implies that $\Vz=\Vzp\Vzz\Vzm$. Furthermore, by \ref{basis-Vz} and \ref{tri-Vz}(1),  the set $\{A^+\br\bfl 0\br\la A^-\br\bfl\mid A\in\afThnpm,\,\la\in\afmbnn\}$ is linearly independent. The proof is completed.
\end{proof}

Now we can prove the main result of this section, which is the affine analogue of \cite[5.5]{BLM}.
\begin{Prop}\label{Vz-subalgebra}
$(1)$ $\Vz$ is a $\mbz$-subalgebra of $\hsKq$.

$(2)$ The elements $\sum_{1\leq i\leq n}\al_i\afE_{i,i+1}\br\bfl$, $\sum_{1\leq i\leq n}\al_i\afE_{i+1,i}\br\bfl$, $0\br{\la_i\afbse_i}$ (for $\al\in\afmbnn$, $\la_i\in\mbn$, $1\leq i\leq n$) generate $\Vz$ as a $\mbz$-algebra.
\end{Prop}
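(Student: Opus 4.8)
The plan is to deduce both parts simultaneously by showing that the $\mbz$-span $\Vz$ is closed under multiplication by the proposed generators, and that these generators lie in $\Vz$. The generators $0\br{\la_i\afbse_i}$ lie in $\Vzz\subseteq\Vz$ by definition, and $\sum_{1\leq i\leq n}\al_i\afE_{i,i+1}\br\bfl\in\Vzp\subseteq\Vz$, $\sum_{1\leq i\leq n}\al_i\afE_{i+1,i}\br\bfl\in\Vzm\subseteq\Vz$; so (2) will follow from (1) once we know these elements generate, which in turn follows from \ref{Vzp-Vzm}(2), \ref{tri decomposition}, and the fact that $\Vzz$ is generated by the $0\br{\la_i\afbse_i}$ (the latter via \ref{formula in Vz}(1), which shows products of $0\br\mu$'s stay in $\Vzz$, together with the obvious induction on $\mu$ using the lexicographic argument of \ref{basis-Vz}).

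For part (1), by \ref{tri decomposition} we have $\Vz=\Vzp\Vzz\Vzm$ with $\Vzp,\Vzm,\Vzz$ already known to be $\mbz$-subalgebras (Lemmas \ref{Vzp-Vzm} and \ref{formula in Vz}(1)). So it suffices to show that $\Vz\cdot\Vz\subseteq\Vz$, and since $\Vz$ is spanned by elements $A^+\br\bfl\, 0\br\la\, A^-\br\bfl$ (again \ref{tri decomposition}), and $\Vzp$ is generated by the elements $B=\sum_i\al_i\afE_{i,i+1}\br\bfl$ while $\Vzm$ is generated by the $C=\sum_i\al_i\afE_{i+1,i}\br\bfl$, it is enough to check that left multiplication of an arbitrary basis element $A\br\la$ (with $A\in\afThnpm$, $\la\in\afmbnn$) by each of the three types of generator lands in $\Vz$. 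For the generator $0\br\mu$ this is \ref{formula in Vz}(1). For $B\br\bfl$ it is \ref{formula in Vz}(2): the right-hand side there is an explicit $\mbz$-linear combination of elements $(A+T^\pm-\ti T^\pm)\br{\la(T)+\dt}$, and since $A+T^\pm-\ti T^\pm\in\afThnpm$ (its diagonal has been stripped off into the $\br{\ }$-bracket), each such term is in $\Bz$; the coefficients $a(T,\dt)\prod\binom{\cdot}{\cdot}$ are integers. Likewise $C\br\bfl\cdot A\br\la\in\Vz$ by \ref{formula in Vz}(3).

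The one genuine gap to fill is that the formulas \ref{formula in Vz}(2),(3) were stated for $A\in\afThnpm$ but multiplication on the left by the generators must be iterated, and after one step the output is a combination of $A'\br{\la'}$ with $A'\in\afThnpm$ — so in fact the formulas apply verbatim at each stage, and no closure issue beyond \ref{formula in Vz} arises. One should also note that \ref{formula in Vz} only covers left multiplication by $B\br\bfl$, $C\br\bfl$, $0\br\mu$; right multiplication is not needed, since to show $\Vz$ is a subalgebra it suffices to show it is closed under left multiplication by a generating set (as the generators generate $\Vz$, an easy induction: if $X\cdot v\in\Vz$ for every generator $X$ and every $v\in\Vz$, then $(X_1\cdots X_k)\cdot v\in\Vz$). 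Thus the argument reduces cleanly to \ref{formula in Vz}.

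The main obstacle is bookkeeping rather than anything deep: one must verify that the indices $\la(T)+\dt$, $\la(\ti T)+\dt$ appearing in \ref{formula in Vz}(2),(3) indeed range over $\afmbnn$ (nonnegativity), so that the output is a sum of honest basis elements of $\Vz$ and not of the "zero" elements $A\br\la=0$ that occur when some off-diagonal entry is negative — but those zero terms cause no harm anyway. I expect the proof to be short: cite \ref{tri decomposition}, \ref{Vzp-Vzm}, \ref{formula in Vz}(1)--(3), observe that the spanning set is closed under left multiplication by the listed generators, invoke the induction on word length in the generators, and conclude (1); then (2) is immediate from (1) together with \ref{Vzp-Vzm}(2) and the generation of $\Vzz$.
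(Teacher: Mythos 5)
Your proposal is correct and follows essentially the same two-pronged strategy as the paper: use \ref{formula in Vz} to show that left multiplication by each proposed generator preserves $\Vz$, and use the triangular decomposition of $\Vz$ to show the generators span everything. The only real difference is organizational: where the paper establishes $\Vz\subseteq\Vz_1$ (with $\Vz_1$ the subalgebra generated by the listed elements) by a fresh double induction on $\ddet A$ and $\sg(\la)$ via the monomial triangularity \ref{tri-Vz}(2), you instead cite \ref{tri decomposition} together with \ref{Vzp-Vzm}(2) and the observation that $0\br\la=0\br{\la_1\afbse_1}\cdots 0\br{\la_n\afbse_n}$, which packages the same induction into results already proved. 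This is a legitimate shortcut and not a fundamentally different route.
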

\begin{proof}
Let $\Vz_1$ be the $\mbz$-subalgebra of $\hsKq$ generated by the elements indicated in (2). From \ref{formula in Vz}, we see that $\Vz_1\han\Vz_1\Vz\han\Vz$. So it is enough to prove that $A\br\la\in\Vz_1$ for all $A\in\afThnpm$ and $\la\in\afmbnn$.
We shall prove this by induction on $\ddet A$. If $\ddet A=0$, then $A=0$ and $0\br\la=0\br{\la_1\afbse_1}\cdots 0\br{\la_n\afbse_n}\in\Vz_1$.

Now we assume that $\ddet A>0$ and our statement is true
for $A'$ with $\ddet {A'}<\ddet A$.
By \ref{tri-Vz}(2), there exist $w_{A^+},w_{A^-}\in\ti\Sg$ such that
$$\ttn^+_{(w_{A^+})}\ttn^-_{(w_{A^-})}=A\br\bfl+g$$
where $g$ is a $\mbz$-linear combination of $B\br\nu$ with $B\in\afThnpm$, $B\p A$ and $\nu\in\afmbnn$. Since, by \cite[3.7.6]{DDF}, $B\p A$ implies that $|\!|B|\!|<|\!|A|\!|$, we have by the induction hypothesis $g\in\Vz_1$. Consequently, $A\br\bfl\in\Vz_1$. Furthermore,  by \ref{formula in Vz}(1),
\begin{equation}\label{0{la}A{0}}
0\br\la A\br\bfl=
A\br\la+\sum_{\dt<\la}\bpa{\ro(A)}{\la-\dt}A\br\dt=
A\br\la+\sum_{\dt<\la\atop
\sg(\dt)<\sg(\la)}\bpa{\ro(A)}{\la-\dt}A\br\dt.
\end{equation}
Thus, using induction on $\sg(\la)$, we see that $A\br\la\in\Vz_1$ for $\la\in\afmbnn$. This finishes the proof.
\end{proof}

Let $\Vq=\spann_\mbq\fB$. Then, by \ref{Vz-subalgebra}, $\Vq$ is a $\mbq$-subalgebra of $\hsKq$. We will prove that $\Vq$ is isomorphic to $\afuglq$. By \ref{basis-Vz}, the set $\fB$ forms a $\mbq$-basis for $\Vq$. We end this section with the construction of another $\mbq$-basis for $\Vq$.
For $A\in\afThnpm$ and $\bfj\in\afmbnn$, define (cf. \cite[(3.0.3)]{Fu09}, \cite[(5.4.2.1)]{DDF})
\begin{equation*}
\begin{split}
A[\bfj]&=\sum_{\la\in\afmbzn}\la^\bfj
[A+\diag(\la)]_1\in\hsKq\\
A[\bfj,r]&=\sum_{\la\in\afLa(n,r-\sg(A))}\la^\bfj
[A+\diag(\la)]_1\in\afSrmbz,
\end{split}
\end{equation*}
where $\la^\bfj=\prod_{1\leq i\leq n}\la_i^{j_i}$.
Note that, by definition,  $A[\bfl]=A\br\bfl$ and
$0[\afbse_i]=0\br{\afbse_i}$ for $A\in\afThnpm$ and $i\in\mbz$.
Clearly,  the following multiplication formula follows immediately from the definition.



\begin{Lem}\label{formula in Wq}
For $A\in\afThnpm$ and $\bfj,\bfj'\in\afmbnn$ we have
$$0[\bfj']A[\bfj]=\sum_{\al\in\afmbnn\atop\al\leq\bfj'}
\bpa{\bfj'}{\al}(\ro(A))^{\bfj'-\al}A[\al+\bfj].$$
In particular we have $0[\bfj']0[\bfj]=0[\bfj+\bfj']$.
\end{Lem}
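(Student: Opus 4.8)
The plan is to derive the identity by a direct expansion from the definition $A[\bfj]=\sum_{\la\in\afmbzn}\la^{\bfj}[A+\diag(\la)]_1$, using only one structural fact about $\sK$: the diagonal elements $[\diag(\mu)]_1$, $\mu\in\afmbzn$, are pairwise orthogonal idempotents and satisfy
$$[\diag(\mu)]_1\,[A+\diag(\la)]_1=\dt_{\mu,\,\ro(A)+\la}\,[A+\diag(\la)]_1$$
for all $A\in\afThnpm$ and $\la\in\afmbzn$; this is the $\al=\bfl$ case of \ref{formula in K}(1) (forcing $T=\bfl$), combined with the vanishing of a product in $\sK$ when $\co(B)\neq\ro(A)$ and the identity $\ro(A+\diag(\la))=\ro(A)+\la$. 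First I would observe that $A[\bfj]$, $0[\bfj']$ and their product genuinely lie in $\hsKq$: once a row sum (or a column sum) is prescribed, the equation $\ro(A)+\la=\bfx$ determines $\la$ uniquely, so only finitely many matrices $A+\diag(\la)$ — in fact exactly one — occur, and every sum below involves only finitely many nonzero terms after fixing $\ro$ or $\co$.

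Next I would compute the product in $\hsKq$ and collapse the sum over $\mu$ via the displayed orthogonality relation:
$$0[\bfj']\,A[\bfj]=\sum_{\mu,\la\in\afmbzn}\mu^{\bfj'}\la^{\bfj}\,[\diag(\mu)]_1[A+\diag(\la)]_1=\sum_{\la\in\afmbzn}(\ro(A)+\la)^{\bfj'}\,\la^{\bfj}\,[A+\diag(\la)]_1.$$
Then I would apply the componentwise binomial theorem,
$$(\ro(A)+\la)^{\bfj'}=\prod_{1\leq i\leq n}\bigl(\ro(A)_i+\la_i\bigr)^{j'_i}=\sum_{\al\in\afmbnn,\;\al\leq\bfj'}\bpa{\bfj'}{\al}\,(\ro(A))^{\bfj'-\al}\,\la^{\al},$$
and interchange the (finite) summations to obtain
$$0[\bfj']\,A[\bfj]=\sum_{\al\leq\bfj'}\bpa{\bfj'}{\al}(\ro(A))^{\bfj'-\al}\sum_{\la\in\afmbzn}\la^{\al+\bfj}\,[A+\diag(\la)]_1=\sum_{\al\leq\bfj'}\bpa{\bfj'}{\al}(\ro(A))^{\bfj'-\al}\,A[\al+\bfj],$$
which is exactly the asserted formula. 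For the ``in particular'' statement I would put $A=0$, so $\ro(A)=\bfl$; with the convention $0^0=1$ the factor $(\bfl)^{\bfj'-\al}=\prod_{1\leq i\leq n}0^{\,j'_i-\al_i}$ is nonzero only when $\al=\bfj'$, and only the summand $0[\bfj'+\bfj]$ survives.

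I do not expect a genuine obstacle here: the whole argument is bookkeeping from the definitions. The only two points that deserve an explicit line are the orthogonal-idempotent behaviour of the $[\diag(\mu)]_1$ in $\sK$ and the verification that each reindexing of sums stays among finitely many nonzero terms once a row sum or a column sum is fixed, so that all manipulations legitimately take place inside $\hsKq$.
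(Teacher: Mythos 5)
Your proof is correct and is precisely the ``immediate from the definition'' computation that the paper has in mind but does not spell out: expand both factors, collapse the $\mu$-sum using the orthogonal-idempotent relation $[\diag(\mu)]_1[A+\diag(\la)]_1=\dt_{\mu,\ro(A)+\la}[A+\diag(\la)]_1$ (the $T=0$ case of Proposition~\ref{formula in K}(1) together with the vanishing rule in $\sK$ when column and row sums mismatch), apply the componentwise binomial theorem, and reindex. Your observations about finiteness (so the manipulations stay inside $\hsKq$) and the $0^0=1$ convention for the special case $A=0$ are exactly the small points worth recording.
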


\begin{Lem}\label{Vqz}
Let $\Vqz$ be the $\mbq$-subspace of $\Vq$ spanned by the elements $0\br{\la}$ for $\la\in\afmbnn$. Then
the set $\{0[\bfj]\mid\bfj\in\afmbnn\}$ forms a $\mbq$-basis for $\Vqz$.
\end{Lem}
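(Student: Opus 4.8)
The plan is to observe that both families consist of ``generating function'' elements of the form $\sum_{\mu\in\afmbzn}p(\mu)[\diag(\mu)]_1$ in which $p$ runs through two standard $\mbq$-bases of a polynomial ring. Indeed, writing $\bpa{\mu}{\la}=\prod_{1\le i\le n}\bpa{\mu_i}{\la_i}$ and $\mu^{\bfj}=\prod_{1\le i\le n}\mu_i^{j_i}$, we have $0\br\la=\sum_{\mu\in\afmbzn}\bpa{\mu}{\la}[\diag(\mu)]_1$ and $0[\bfj]=\sum_{\mu\in\afmbzn}\mu^{\bfj}[\diag(\mu)]_1$, so the lemma reduces to the classical fact that the families $\{\bpa{x}{k}\}_{k\ge0}$ and $\{x^{j}\}_{j\ge0}$ are two $\mbq$-bases of $\mbq[x]$ related by an invertible triangular change of basis.

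First I would record the two one-variable facts: for $j\ge0$, $x^{j}$ is a $\mbq$-linear combination of $\bpa{x}{0},\bpa{x}{1},\dots,\bpa{x}{j}$ in which $\bpa{x}{j}$ occurs with coefficient $j!$; conversely $\bpa{x}{k}=\frac1{k!}\prod_{0\le s<k}(x-s)$ is a $\mbq$-linear combination of $1,x,\dots,x^{k}$ in which $x^{k}$ occurs with coefficient $\frac1{k!}$. Taking products over the $n$ coordinates, for $\la,\bfj\in\afmbnn$ and every $\mu\in\afmbzn$ this yields
$$\mu^{\bfj}=\sum_{\la\in\afmbnn,\,\la\le\bfj}d_{\bfj,\la}\bpa{\mu}{\la},\qquad \bpa{\mu}{\la}=\sum_{\bfj\in\afmbnn,\,\bfj\le\la}c_{\la,\bfj}\,\mu^{\bfj},$$
with nonzero diagonal coefficients $d_{\bfj,\bfj}=\bfj!:=\prod_{1\le i\le n}j_i!$ and $c_{\la,\la}=1/\la!$, all index sets being finite boxes. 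Substituting these finite expansions into the definitions of $0[\bfj]$ and $0\br\la$ and rearranging (only finitely many terms enter each coefficient), I obtain in $\hsKq$
$$0[\bfj]=\sum_{\la\in\afmbnn,\,\la\le\bfj}d_{\bfj,\la}\,0\br\la,\qquad 0\br\la=\sum_{\bfj\in\afmbnn,\,\bfj\le\la}c_{\la,\bfj}\,0[\bfj].$$
The first identity shows $0[\bfj]\in\Vqz$ for every $\bfj\in\afmbnn$, and the second shows that the $0[\bfj]$ span $\Vqz$.

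It remains to prove linear independence. Suppose $\sum_{\bfj}a_{\bfj}\,0[\bfj]=0$ with $a_{\bfj}\in\mbq$ all but finitely many zero. Expanding $0[\bfj]=\sum_{\mu}\mu^{\bfj}[\diag(\mu)]_1$ and using that the elements $[\diag(\mu)]_1$ $(\mu\in\afmbzn)$ are $\mbq$-linearly independent in $\hsKq$ (they are part of the $\mbz$-basis $\{[A]_1\mid A\in\aftiThn\}$ of $\sK$), I get $\sum_{\bfj}a_{\bfj}\mu^{\bfj}=0$ for every $\mu\in\afmbzn$. Under the identification $\flat\colon\afmbzn\lra\mbz^{n}$, the left-hand side is the value at $\mu$ of the polynomial $\sum_{\bfj}a_{\bfj}x_1^{j_1}\cdots x_n^{j_n}\in\mbq[x_1,\dots,x_n]$; since this polynomial vanishes on all of $\mbz^{n}$, it is zero, so $a_{\bfj}=0$ for all $\bfj$. (Alternatively, independence follows at once from the triangularity of the matrix $(d_{\bfj,\la})_{\bfj,\la\in\afmbnn}$, whose diagonal entries $\bfj!$ are nonzero.) Hence $\{0[\bfj]\mid\bfj\in\afmbnn\}$ is a $\mbq$-basis of $\Vqz$.

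I do not expect a genuine obstacle here, as this is a routine change-of-basis argument; the only points needing a little care are the bookkeeping with the periodic index sets $\afmbnn$ and $\afmbzn$ and their identification with $\mbn^{n}$ and $\mbz^{n}$ via $\flat$, the remark that each binomial/monomial expansion is finite so the rearrangements are legitimate inside $\hsKq$, and the linear independence of the diagonal elements $[\diag(\mu)]_1$ inside the completion $\hsKq$.
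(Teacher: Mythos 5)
Your proof is correct but follows a genuinely different route from the paper's. The paper's argument is structural: it cites \cite[6.3.3]{DDF} for the linear independence of the $0[\bfj]$, uses \ref{formula in Vz}(1) to see that $\Vqz$ is closed under multiplication and writes $0[\bfj]=0[\afbse_1]^{j_1}\cdots 0[\afbse_n]^{j_n}=0\br{\afbse_1}^{j_1}\cdots 0\br{\afbse_n}^{j_n}\in\Vqz$ (the first equality resting on the product rule $0[\bfj']0[\bfj]=0[\bfj+\bfj']$ from \ref{formula in Wq}), and then, having concluded from \ref{formula in Wq} that the span of the $0[\bfj]$ is also a subalgebra, gets the reverse inclusion by factoring
\begin{equation*}
0\br\bfj=\prod_{1\le i\le n}\frac{0[\afbse_i](0[\afbse_i]-1)\cdots(0[\afbse_i]-j_i+1)}{j_i!}.
\end{equation*}
Your argument instead bypasses the multiplication formulas entirely: you read off from the definitions that the two families of generating sums have coefficient functions equal to the binomial-coefficient basis $\{\bpa{\mu}{\la}\}$ and the monomial basis $\{\mu^{\bfj}\}$ of the polynomial ring in $n$ variables, transfer the classical finite triangular change of basis between $\{x^{j}\}_{j\ge0}$ and $\{\bpa{x}{k}\}_{k\ge0}$ termwise into $\hsKq$ (legitimate because the index set at each step is finite), and prove linear independence directly from the independence of the diagonal basis elements $[\diag(\mu)]_1$, or equivalently from triangularity of the transition matrix. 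What you gain is self-containment: no reliance on Lemmas~\ref{formula in Vz} and \ref{formula in Wq}, nor on the external citation for independence. What the paper's version gains is that it stays inside the multiplicative framework already built and makes explicit the factorization of $0\br\bfj$ into falling factorials in the $0[\afbse_i]$, which mirrors the definition of $\afuglzz$ as the span of $\prod_i\bpa{\afE_{i,i}}{\la_i}$ and is reused in the proof of \ref{realization of afuglz}.
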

\begin{proof}
Let $\Vqz_1=\spann_\mbq\{0[\bfj]\mid\bfj\in\afmbnn\}$. By \cite[6.3.3]{DDF}, it is enough to prove that $\Vqz=\Vqz_1$.
Since, by \ref{formula in Vz}(1), $\Vqz$ is a $\mbq$-subalgebra of $\hsKq$, we have
$0[\bfj]=0[\afbse_1]^{j_1}\cdots 0[\afbse_n]^{j_n}
=0\br{\afbse_1}^{j_1}\cdots 0\br{\afbse_n}^{j_n}\in\Vqz$ for $\bfj\in\afmbnn$. Furthermore, by \ref{formula in Wq}, $\Vqz_1$ is a $\mbq$-subalgebra of $\hsKq$. This implies that
$$0\br\bfj=\prod_{1\leq i\leq n}0\br{j_i\afbse_i}
=\prod_{1\leq i\leq n}\frac{0[\afbse_i](0[\afbse_i]-1)\cdots
(0[\afbse_i]-j_i+1)}{j_i!}\in\Vqz_1.$$
Consequently, $\Vqz=\Vqz_1$.
\end{proof}

\begin{Prop}\label{Vq=Wq}
The set $\fC:=\{A[\bfj]\mid A\in\afThnpm,\,\bfj\in\afmbzn\}$ forms a $\mbq$-basis for $\Vq$.
\end{Prop}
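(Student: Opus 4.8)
The plan is to recognise $\fC$ as a block‑diagonal (in $A\in\afThnpm$), triangular‑within‑each‑block refinement of the $\mbq$‑basis $\fB=\{A\br\la\mid A\in\afThnpm,\ \la\in\afmbnn\}$ of $\Vq$ supplied by Lemma~\ref{basis-Vz}. Concretely, I would show that for each fixed $A$ the families $\{A\br\la\mid\la\in\afmbnn\}$ and $\{A[\bfj]\mid\bfj\in\afmbnn\}$ span the same $\mbq$‑subspace of $\hsKq$ and are interchanged by an invertible $\le$‑triangular change of coordinates, and then assemble these over all $A$.

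First I would record the elementary identity behind everything: the binomials $\binom{x}{0},\dots,\binom{x}{m}\in\mbq[x]$ form a triangular basis of the polynomials of degree $\le m$, with $\binom{x}{k}$ of degree $k$ and leading coefficient $1/k!$, so that
\[
x^m=\sum_{0\le k\le m}s_{m,k}\binom{x}{k},\qquad s_{m,k}\in\mbq,\quad s_{m,m}=m!\,.
\]
Multiplying these over the residues $i=1,\dots,n$ gives, for all $\bfj\in\afmbnn$ and all $\la\in\afmbzn$,
\[
\la^{\bfj}=\sum_{\substack{\eta\in\afmbnn\\ \eta\le\bfj}}c_{\bfj,\eta}\binom{\la}{\eta},\qquad c_{\bfj,\eta}=\prod_{1\le i\le n}s_{j_i,\eta_i}\in\mbq,\quad c_{\bfj,\bfj}=\prod_{1\le i\le n}j_i!\neq 0,
\]
a \emph{finite} sum. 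Since $\la^{\bfj}$ (resp.\ $\binom{\la}{\eta}$) is exactly the coefficient of $[A+\diag(\la)]_1$ in $A[\bfj]=\sum_{\la\in\afmbzn}\la^{\bfj}[A+\diag(\la)]_1$ (resp.\ in $A\br\eta=\sum_{\la\in\afmbzn}\binom{\la}{\eta}[A+\diag(\la)]_1$), this lifts term by term to $A[\bfj]=\sum_{\eta\in\afmbnn,\ \eta\le\bfj}c_{\bfj,\eta}\,A\br\eta$ for $A\in\afThnpm$ and $\bfj\in\afmbnn$. In particular every $A[\bfj]$ is a finite $\mbq$‑combination of elements of $\fB$, so $\fC\subseteq\Vq$ and $\spann_{\mbq}\fC\subseteq\Vq$.

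Next I would invert this system. Ordering $\afmbnn$ so that $\eta\le\bfj$ forces $\eta$ to precede $\bfj$ (say, by total degree $j_1+\cdots+j_n$, breaking ties arbitrarily), the array $(c_{\bfj,\eta})$ is triangular with nonzero diagonal $c_{\bfj,\bfj}$; hence, by induction on the total degree, each $A\br\bfj$ is a finite $\mbq$‑combination of the $A[\eta]$ with $\eta\le\bfj$, so $\spann_{\mbq}\fB\subseteq\spann_{\mbq}\fC$ and, with the previous step, $\spann_{\mbq}\fC=\Vq$. Linear independence of $\fC$ then follows from that of $\fB$: in a finite relation $\sum_{A,\bfj}k_{A,\bfj}A[\bfj]=0$, substituting the expansion of $A[\bfj]$ and collecting the coefficient of $A\br\eta$ gives $\sum_{\bfj\ge\eta}k_{A,\bfj}c_{\bfj,\eta}=0$ for every $A,\eta$; choosing for each $A$ that occurs an index $\bfj^{\ast}$ maximal for $\le$ among $\{\bfj\mid k_{A,\bfj}\neq 0\}$ and reading off the coefficient of $A\br{\bfj^{\ast}}$ forces $k_{A,\bfj^{\ast}}c_{\bfj^{\ast},\bfj^{\ast}}=0$, hence $k_{A,\bfj^{\ast}}=0$, a contradiction. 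Thus $\fC$ is a $\mbq$‑basis of $\Vq$.

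I do not expect a genuine obstacle, since the whole argument is a blockwise triangular change of basis; the two points needing a little care are that each $A[\bfj]$, though an infinite sum in $\hsKq$, really lies in $\Vq$ — which is precisely what the Stirling‑type expansion provides — and that $\le$‑triangularity of $(c_{\bfj,\eta})$ survives the passage from one variable to $n$, which it does, a product of one‑variable triangular transformations being triangular for the product order. An alternative route, deducing the statement from the triangular decomposition $\Vq=\Vqp\Vqz\Vqm$ together with Lemma~\ref{Vqz} for the $\Vqz$‑factor and Lemma~\ref{Vzp-Vzm}, is also available, but the direct comparison with $\fB$ above is shorter.
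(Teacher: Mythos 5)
Your argument is correct, and it takes a genuinely different route than the paper. The paper proves that $\sV_A:=\spann_\mbq\{A\br\la\}$ and $\sW_A:=\spann_\mbq\{A[\la]\}$ coincide by an induction on $\sg(\la)$ that leans on the multiplication formulas already developed (Lemma~\ref{formula in Wq}, Proposition~\ref{formula in Vz}(1), Lemma~\ref{Vqz}, and the relation \eqref{0{la}A{0}}), and it \emph{imports} the linear independence of $\fC$ from \cite[6.3.3]{DDF}. You instead observe that for each fixed $A$ the coefficient data $(\la^{\bfj})_{\la}$ and $\bigl(\binom{\la}{\eta}\bigr)_{\la}$ are related by the Stirling-type change of variables $\la^{\bfj}=\sum_{\eta\le\bfj}c_{\bfj,\eta}\binom{\la}{\eta}$, a finite, $\le$-triangular system with nonzero diagonal $c_{\bfj,\bfj}=\prod_i j_i!$, and then read off $A[\bfj]=\sum_{\eta\le\bfj}c_{\bfj,\eta}A\br\eta$ coefficientwise; inverting by $\le$-triangularity gives equality of spans, and linear independence of $\fC$ falls out of that of $\fB$ (Lemma~\ref{basis-Vz}) by selecting a maximal $\bfj^\ast$, with no external citation needed. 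Your route is more elementary and self-contained, since it proves the independence internally rather than citing \cite{DDF}, and it isolates the statement as a pure block-diagonal change of basis independent of the algebra structure of $\hsKq$; the paper's argument is a natural by-product of the multiplication machinery it has already built and so costs little in context, but it gives no new insight beyond that machinery. One small remark: the proposition statement writes $\bfj\in\afmbzn$, but $A[\bfj]$ is only defined for $\bfj\in\afmbnn$; both you and the paper's proof correctly read this as $\afmbnn$.
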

\begin{proof}
For $A\in\afThnpm$, let $\sV_A=\spann_\mbq\{A\br\la\mid \la\in\afmbnn\}$ and $\sW_A=\spann_\mbq\{A[\la]\mid \la\in\afmbnn\}$.
Since, by \cite[6.3.3]{DDF}, the set $\fC$ is linearly independent, it is enough to prove that $\sV_A=\sW_A$ for each $A$.
Fix $A\in\afThnpm$. We now prove $A\br\la\in\sW_A$ and $A[\la]\in\sV_A$ by induction on $\sg(\la)$. If $\sg(\la)=0$ then $A\br\bfl=A[\bfl]\in\sW_A\cap\sV_A$. Now we assume $\sg(\la)>1$ and $A\br\mu\in\sW_A$, $A[\mu]\in\sV_A$ for $\mu\in\afmbnn$ with $\sg(\mu)<\sg(\la)$.
By \ref{formula in Wq} and \ref{Vqz}, we have $0\br\la A\br\bfl=0\br\la A[\bfl]\in\sW_A$.
This, together with \eqref{0{la}A{0}} and the induction hypothesis, implies that $A\br\la\in\sW_A$. On the other hand, by \ref{formula in Wq} we have
\begin{equation*}\label{eq 2-Vq=Wq}
A[\la]=0[\la]A[\bfl]-\sum_{\al\in\afmbnn,\,\al\leq\la
\atop\sg(\al)<\sg(\la)}\bpa{\la}{\al}\ro(A)^{\la-\al}A[\al].
\end{equation*}
Furthermore, by \ref{formula in Vz}(1) and \ref{Vqz}, we have $0[\la]A[\bfl]=0[\la]A\br\bfl\in\sV_A$. Thus, by the induction hypothesis, $A[\la]\in\sV_A$. Consequently, $\sV_A=\sW_A$ for all $A\in\afThnpm$. This finishes the proof.
\end{proof}

\section{Realization of $\sU_\mbz(\h{\frak{gl}}_n)$ and affine Schur-Weyl duality}

In this section, we will prove that $\Vz$ is the realization of $\afuglz$ and use it to prove that the natural surjective algebra homomorphism
$\xi_r:\afuglq\ra\afSrmbq$ remains surjective at the integral level.

By \cite{Yang1} and \cite[6.1.3]{DDF},
there is a unique surjective algebra homomorphism
\begin{equation}\label{xir}
\xi_r:\afuglq\ra\afSrmbq
\end{equation}
such that $\xi_r(\afE_{i,j})=\afE_{i,j}[\bfl,r]$
and $\xi_r(\afE_{i,i})=0[\afbse_i,r]$ for $i\not=j$.
We will see that the maps $\xi_r$ induce an algebra isomorphism $\xi$ from $\afuglq$ to $\Vq$ such that $\xi(\afuglz)=\Vz$.

\begin{Lem}\label{xi}
There is a unique algebra homomorphism
$$\xi:\afuglq\ra\Vq$$
such that $\xi(\afE_{i,j})=\afE_{i,j}[\bfl]$
and $\xi(\afE_{i,i})=0[\afbse_i]$ for $i\not=j$.
\end{Lem}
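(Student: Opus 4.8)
The plan is to realize $\xi$ by transporting the homomorphisms $\xi_r$ of \eqref{xir} along the maps $\hzr$ of \eqref{hzr}. Taking products yields algebra homomorphisms $\prod_{r\geq 0}\xi_r\colon\afuglq\to\prod_{r\geq 0}\afSrmbq$ and $\hz=\prod_{r\geq 0}\hzr\colon\hsKq\to\prod_{r\geq 0}\afSrmbq$. Unwinding the definitions of $\hzr$, $A\br\la$ and $A[\bfj]$ one checks that $\hzr(A\br\la)=A\br{\la,r}$ and $\hzr(A[\bfj])=A[\bfj,r]$ for $A\in\afThnpm$ and $\la,\bfj\in\afmbnn$; in particular $\hzr(\afE_{i,j}[\bfl])=\afE_{i,j}[\bfl,r]=\xi_r(\afE_{i,j})$ for $i\neq j$ and $\hzr(0[\afbse_i])=0[\afbse_i,r]=\xi_r(\afE_{i,i})$. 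I would then set $\xi:=(\hz|_{\Vq})^{-1}\circ\bigl(\prod_{r\geq 0}\xi_r\bigr)$, which forces the claimed values on generators and, being the composite of an algebra homomorphism with the inverse of an injective algebra homomorphism, is itself an algebra homomorphism. Two things must be verified: that $\Image\bigl(\prod_{r\geq0}\xi_r\bigr)\subseteq\hz(\Vq)$, and that $\hz|_{\Vq}$ is injective.

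The first point is routine. Since the loop algebra $\afgl$ is spanned by the matrices $\afE_{k,l}$ with $k\neq l$ together with the $\afE_{i,i}$ ($1\leq i\leq n$), the algebra $\afuglq$ is generated by these elements. For each such generator the corresponding sequence $(\xi_r(\,\cdot\,))_{r\geq0}$ equals $\hz$ applied to $\afE_{i,j}[\bfl]=\afE_{i,j}\br\bfl$ (resp. to $0[\afbse_i]=0\br{\afbse_i}$), and these lie in $\fB\subseteq\Vq$. As $\Vq$ is a $\mbq$-subalgebra of $\hsKq$ by \ref{Vz-subalgebra} and $\hz$ is an algebra homomorphism, it follows that $(\xi_r(x))_{r\geq0}\in\hz(\Vq)$ for every $x\in\afuglq$.

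The main obstacle is the injectivity of $\hz|_{\Vq}$: the map $\hz$ is far from injective on all of $\hsKq$ (its kernel contains every $[A]_1$ with $A\notin\afThn$), so the argument must exploit the particular shape of the elements of $\Vq$. Suppose $v=\sum_{A\in\afThnpm,\,\la\in\afmbnn}c_{A,\la}A\br\la$ satisfies $\hz(v)=0$, i.e. $\sum_{A,\la}c_{A,\la}A\br{\la,r}=0$ in $\afSrmbq$ for all $r$. Expanding $A\br{\la,r}=\sum_{\mu\in\afLa(n,r-\sg(A))}\bpa{\mu}{\la}[A+\diag(\mu)]_1$ and using the linear independence of the standard basis $\{[B]_1\mid B\in\afThnr\}$, one extracts, for each $A\in\afThnpm$ and each $\mu\in\afmbnn$ (taking $r=\sg(A)+\sg(\mu)$), the relation $\sum_{\la}c_{A,\la}\bpa{\mu}{\la}=0$. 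The lexicographic minimality argument from the proof of \ref{basis-Vz} then gives $c_{A,\la}=0$ for all $A,\la$, so $\hz|_{\Vq}$ is injective. This yields the existence of $\xi$ with the stated values on $\afE_{i,j}$ and $\afE_{i,i}$ (and also $\xi(1)=0\br\bfl$, the identity of $\Vq$); uniqueness is immediate, since the $\afE_{i,j}$ ($i\neq j$) and the $\afE_{i,i}$ generate $\afuglq$ as an algebra.
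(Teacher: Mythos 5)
Your proposal is correct, and it takes a genuinely different route from the paper. The paper proves \ref{xi} directly by writing down a presentation of $\afuglq$ with generators $\afE_{i,j}$ and relations (a), (b), and then checking the corresponding identities (R1), (R2), (R3) inside $\Vq$. Relations (R1) and (R2) are straightforward; the heart of the paper's proof is (R3), which is obtained by applying $\xi_r$ to relation (b), multiplying by $[\diag(\la)]_1$, and then invoking \ref{stabilization property} together with the definition of $\sK$ to lift the resulting identity from $\afSrmbq$ (for large $r$) to $\hsKq$, and finally summing over $\la$. Your approach instead builds $\xi$ abstractly as $(\hz|_{\Vq})^{-1}\circ\prod_{r\geq 0}\xi_r$; once you have shown (i) that the image of $\prod_r\xi_r$ lands in $\hz(\Vq)$ and (ii) that $\hz|_{\Vq}$ is injective, there is nothing left to verify, since both $\prod_r\xi_r$ and $\hz$ are algebra homomorphisms. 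This avoids the explicit relation-checking entirely. The key new ingredient you supply is the injectivity of $\hz|_{\Vq}$, which the paper proves only for $\Vqp$ (inside the proof of \ref{Vzp-Vzm}), not for all of $\Vq$; your extension — expanding $A\br{\la,r}$ in the standard basis of $\afSrmbq$, reading off the coefficient of $[A+\diag(\mu)]_1$ at $r=\sg(A)+\sg(\mu)$, and then running the lexicographic minimality argument from \ref{basis-Vz} — is correct. What your approach buys is a cleaner conceptual structure (existence of $\xi$ becomes a formal consequence of a stabilization/faithfulness statement) and the identification of $\hz|_{\Vq}$ being injective as a useful structural fact in its own right; what the paper's approach buys is that it never needs this injectivity and works directly with the presentation, giving more explicit control over the verification. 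Both arguments are valid, and your argument correctly uses only results (\ref{Vz-subalgebra}, \eqref{hzr}, \eqref{xir}) already established at this point in the paper.
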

\begin{proof}
Note that $\afuglq$ has a presentation with
generators $\afE_{i,j}$ ($1\leq i\leq n$, $j\in\mbz$), subject to the following relations:
\begin{itemize}
\item[(a)]
$[\afE_{i,i},\afE_{k,l}]=(\dt_{\bar i,\bar k}-\dt_{\bar i,\bar l})\afE_{k,l}$.
\item[(b)]
$[\afE_{i,j},\afE_{k,l}]=\dt_{\bar j,\bar
k}\afE_{i,l+j-k}-\dt_{\bar l,\bar i}\afE_{k,j+l-i}$ for $i\not=j$ and $k\not=l$.
\end{itemize}
Thus it is enough to prove that
\begin{itemize}
\item[(R1)]
$0[\afbse_i]0[\afbse_k]=0[\afbse_k]0[\afbse_i]$ for all $i,k$;
\item[(R2)]
$0[\afbse_i]\afE_{k,l}[\bfl]
-\afE_{k,l}[\bfl]0[\afbse_i]=(\dt_{\bar i,\bar k}-\dt_{\bar i,\bar l})\afE_{k,l}[\bfl]$ for $k\not=l$;
\item[(R3)]
$\afE_{i,j}[\bfl]\afE_{k,l}[\bfl]-
\afE_{k,l}[\bfl]\afE_{i,j}[\bfl]=\dt_{\bar j,\bar
k}\afE_{i,l+j-k}[\bfl]-\dt_{\bar l,\bar i}\afE_{k,j+l-i}[\bfl]$ for $i\not=j$ and $k\not=l$.
\end{itemize}
For $i,k\in\mbz$, we have
$$0[\afbse_i]0[\afbse_k]=0[\afbse_i+\afbse_k]
=0[\afbse_k]0[\afbse_i],$$
proving (R1).
By definition, for $i\in\mbz$ and $k\not=l\in\mbz$, we have
\begin{equation*}
\begin{split}
0[\afbse_i]\afE_{k,l}[\bfl]-\afE_{k,l}[\bfl]0[\afbse_i]
&=\sum_{\mu\in\afmbzn}(\mu_i+\dt_{\bar i,\bar k})[\afE_{k,l}+\diag(\mu)]-
\sum_{\mu\in\afmbzn}(\mu_i+\dt_{\bar i,\bar l})[\afE_{k,l}+\diag(\mu)]\\
&=(\dt_{\bar i,\bar k}-\dt_{\bar i,\bar l})\afE_{k,l}[\bfl],
\end{split}
\end{equation*}
proving (R2).

It remains to prove (R3). Assume $i\not=j$ and $k\not=l$.
Applying $\xi_r$ to  (b)  yields $$\afE_{i,j}[\bfl,r]\afE_{k,l}[\bfl,r]-
\afE_{k,l}[\bfl,r]\afE_{i,j}[\bfl,r]=\dt_{\bar j,\bar
k}\afE_{i,l+j-k}[\bfl,r]-\dt_{\bar l,\bar i}\afE_{k,j+l-i}[\bfl,r].$$
Multiplying on both sides by $[\diag(\la)]_1$ ($\la\in\afLanr$ and $\la\geq\afbse_i+\afbse_k$) gives the following formula in $\afSrmbq$:
\begin{equation*}
\begin{split}
&[\afE_{i,j}+\diag(\la^{(1)})]_1[\afE_{k,l}+
\diag(\la^{(2)})]_1-
[\afE_{k,l}+
\diag(\la^{(3)})]_1[\afE_{i,j}+
\diag(\la^{(4)})]_1\\
&\quad=\dt_{\bar j,\bar
k}[\afE_{i,l+j-k}+\diag(\la^{(1)})]_1
-\dt_{\bar l,\bar i}[\afE_{k,j+l-i}+\diag(\la^{(3)})]_1,
\end{split}
\end{equation*}
where
$\la^{(1)}=\la-\afbse_i$, $\la^{(2)}=\la-\afbse_i+\afbse_j-\afbse_k$,
$\la^{(3)}=\la-\afbse_k$, $\la^{(4)}=\la-\afbse_k+\afbse_l-\afbse_i$.
Thus by \ref{stabilization property} and the definition of $\sK$, for any $\la\in\afmbzn$, we have in $\hsKq$,
\begin{equation*}
\begin{split}
[\diag(\la)]_1(\afE_{i,j}[\bfl]\afE_{k,l}[\bfl]-
\afE_{k,l}[\bfl]\afE_{i,j}[\bfl])&= [\afE_{i,j}+\diag(\la^{(1)})]_1[\afE_{k,l}+
\diag(\la^{(2)})]_1\\
&\qquad -
[\afE_{k,l}+
\diag(\la^{(3)})]_1[\afE_{i,j}+
\diag(\la^{(4)})]_1\\
&=\dt_{\bar j,\bar
k}[\afE_{i,l+j-k}+\diag(\la^{(1)})]_1
-\dt_{\bar l,\bar i}[\afE_{k,j+l-i}+\diag(\la^{(3)})]_1\\
&=[\diag(\la)]_1(\dt_{\bar j,\bar
k}\afE_{i,l+j-k}[\bfl]-\dt_{\bar l,\bar i}\afE_{k,j+l-i}[\bfl]).
\end{split}
\end{equation*}
This implies that,
\begin{equation*}
\begin{split}
\afE_{i,j}[\bfl]\afE_{k,l}[\bfl]-
\afE_{k,l}[\bfl]\afE_{i,j}[\bfl]&
=\sum_{\la\in\afmbzn}[\diag(\la)]_1(\afE_{i,j}[\bfl]\afE_{k,l}[\bfl]-
\afE_{k,l}[\bfl]\afE_{i,j}[\bfl])\\
&=\sum_{\la\in\afmbzn}[\diag(\la)]_1(\dt_{\bar j,\bar
k}\afE_{i,l+j-k}[\bfl]-\dt_{\bar l,\bar i}\afE_{k,j+l-i}[\bfl])\\
&=\dt_{\bar j,\bar
k}\afE_{i,l+j-k}[\bfl]-\dt_{\bar l,\bar i}\afE_{k,j+l-i}[\bfl],
\end{split}
\end{equation*}
proving (R3).
\end{proof}

We can now prove that $\Vz$ gives a BLM realization of $\afuglz$, which is the affine version of \cite[5.7]{BLM}.


\begin{Thm}\label{realization of afuglz}
$(1)$ The algebra homomorphism $\xi:\afuglq\ra\Vq$ defined in \ref{xi} is an algebra isomorphism.

$(2)$ $\afuglz$ is a $\mbz$-subalgebra of $\afuglq$ and the restriction of $\xi$ to $\afuglz$ gives a $\mbz$-algebra isomorphism $\xi:\afuglz\ra\Vz$.
\end{Thm}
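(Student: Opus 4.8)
The plan is to prove both parts simultaneously, building on the structural results already established. For part (1), the map $\xi:\afuglq\to\Vq$ exists and is an algebra homomorphism by \ref{xi}. To show it is an isomorphism, I would first check surjectivity: by \ref{Vz-subalgebra}(2) the algebra $\Vz$ (hence $\Vq$) is generated by $\sum_{1\leq i\leq n}\al_i\afE_{i,i+1}\br\bfl$, $\sum_{1\leq i\leq n}\al_i\afE_{i+1,i}\br\bfl$ and $0\br{\la_i\afbse_i}$, so it suffices to see these lie in the image of $\xi$. The elements $\afE_{i,j}[\bfl]=\afE_{i,j}\br\bfl$ and $0[\afbse_i]=0\br{\afbse_i}$ are clearly in the image (they are $\xi(\afE_{i,j})$ and $\xi(\afE_{i,i})$); since $\Vzp$ is generated by the $\sum_i\al_i\afE_{i,i+1}\br\bfl$ by \ref{Vzp-Vzm}(2) and is the image of $\Hall_\mbz$ under $\thp$, and the Chevalley generators $\afE_{i,i+1}$ map into $\afuglqp$, surjectivity onto $\Vzp$ follows from \ref{Vzp-Vzm}(1); similarly for $\Vzm$ and $\Vzz$ using \ref{Vqz}. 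By the triangular decomposition $\Vz=\Vzp\Vzz\Vzm$ of \ref{tri decomposition}, $\xi$ is surjective.

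For injectivity, I would use a dimension/basis count. By \ref{Vq=Wq}, the set $\{A[\bfj]\mid A\in\afThnpm,\,\bfj\in\afmbzn\}$ is a $\mbq$-basis of $\Vq$ (I'd work with $\bfj\in\afmbnn$ to match the PBW indexing, using that $\afmbnn$ and $\afmbzn$ index the same spanning sets through the combinatorial identities of \ref{combination formula2}, exactly as in the proof of \ref{Vqz}). On the other side, $\afuglq$ has the PBW basis $\prod_{i<j}(\afE_{i,j})^{a_{i,j}}\prod_i\bpa{\afE_{i,i}}{\la_i}\prod_{i<j}(\afE_{j,i})^{b_{j,i}}$ indexed by $(A^+,\la,A^-)$ with $A^+\in\afThnp$, $A^-\in\afThnm$, $\la\in\afmbnn$ — the same index set. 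One then checks that $\xi$ sends this PBW basis to a triangular combination of the basis $\{A^+\br\bfl 0\br\la A^-\br\bfl\}$ (which is a basis by \ref{basis-Vz} and \ref{tri-Vz}), using \ref{iotap} and \ref{Hopf algebra isomorphism} on the $+$, $0$ and $-$ pieces and the multiplication formula \ref{formula in Vz}(1) to relate $0\br\la$ to $\prod_i 0\br{\la_i\afbse_i}$. Since the transition matrix is unitriangular (up to invertible scalars), $\xi$ maps a basis to a basis, proving (1).

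For part (2), I would argue that $\afuglz=\afuglzp\afuglzz\afuglzm$ is a $\mbz$-subalgebra and that $\xi$ restricts to an isomorphism onto $\Vz$. First, $\xi(\afuglzp)=\Vzp$: indeed $\afuglzp=\iota^+(\Hall_\mbz)$ by definition, and $\thp:\Hall_\mbz\to\Vzp$ is an isomorphism by \ref{Vzp-Vzm}(1) with $\thp=\xi\circ\iota^+$ on generators (both send $u_{\afE_{i,j},1}$ to $\afE_{i,j}\br\bfl$); similarly $\xi(\afuglzm)=\Vzm$. Next $\xi(\afuglzz)=\Vzz$: by \ref{Vqz} and its proof, $0\br\bfj=\prod_i\bpa{0[\afbse_i]}{j_i}$, which is precisely $\xi(\prod_i\bpa{\afE_{i,i}}{j_i})$, so the $\mbz$-span $\afuglzz$ of the $\prod_i\bpa{\afE_{i,i}}{\la_i}$ maps onto the $\mbz$-span $\Vzz$ of the $0\br\la$. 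Now $\Vz=\Vzp\Vzz\Vzm$ is a $\mbz$-subalgebra of $\hsKq$ by \ref{Vz-subalgebra}(1), and $\Vz=\xi(\afuglzp)\xi(\afuglzz)\xi(\afuglzm)=\xi(\afuglzp\afuglzz\afuglzm)=\xi(\afuglz)$ as $\mbz$-modules; since $\Vz$ is closed under multiplication and $\xi$ is an injective $\mbq$-algebra map, the preimage $\afuglz$ is a $\mbz$-subalgebra of $\afuglq$ and $\xi|_{\afuglz}:\afuglz\to\Vz$ is a $\mbz$-algebra isomorphism (bijectivity on $\mbz$-modules follows because the basis $\fB$ of $\Vz$ pulls back to a $\mbz$-basis of $\afuglz$ via the unitriangular change of basis from part (1), which has $\mbz$-entries).

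The main obstacle I expect is verifying the unitriangularity of $\xi$ on PBW bases with control over the scalars — i.e. showing $\xi\bigl(\prod_{i<j}\afE_{i,j}^{a_{i,j}}\prod_i\bpa{\afE_{i,i}}{\la_i}\prod_{i<j}\afE_{j,i}^{b_{j,i}}\bigr)=A^+\br\bfl\,0\br\la\,A^-\br\bfl+(\text{$\p$-lower terms})$ — since on the $\pm$ side this requires matching the Ringel--Hall multiplication (which introduces $\vi$-coefficients) against the $\sK$-multiplication formulas of \ref{formula in K}, and the interaction of the $0\br\la$ factor with $A^\pm\br\bfl$ via \ref{formula in Vz} must be shown not to disturb leading terms. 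The key facts that make this work are \ref{tri-Vz}, which already records exactly this triangularity at the level of $\Vz$, and the compatibility $\thp=\xi\circ\iota^+$, $\thm=\xi\circ\iota^-$, so the argument should reduce to assembling results already in hand rather than fresh computation.
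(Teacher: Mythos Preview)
Your proposal is correct, and for part (2) it matches the paper's argument almost exactly: you show $\xi\circ\iota^+=\thp$ on generators, deduce $\xi(\afuglzp)=\Vzp$ and similarly for the $-$ and $0$ parts, and then use $\Vz=\Vzp\Vzz\Vzm$ from \ref{tri decomposition}.

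For part (1), however, the paper takes a cleaner route that avoids the triangularity obstacle you anticipate. Rather than showing $\xi$ sends a PBW basis to something \emph{triangular} in the basis $\{A^+\br\bfl\,0\br\la\,A^-\br\bfl\}$, the paper chooses a different basis of $\Vq$ that $\xi$ hits \emph{on the nose}. Namely, by \ref{iotap}(1) and \ref{Vzp-Vzm}(1), the monomials $\prod_{(i,j)\in\sL^+}(\afE_{i,j}[\bfl])^{a_{i,j}}$ form a $\mbq$-basis of $\Vqp$ (since $\thp$ is an algebra isomorphism carrying the PBW basis of $\Hall_\mbq$ to these products); likewise for $\Vqm$; and $\{0[\bfj]\mid\bfj\in\afmbnn\}$ is a basis of $\Vqz$ by \ref{Vqz}. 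Combined with the tensor decomposition \ref{tri decomposition}, the products
\[
\prod_{(i,j)\in\sL^+}(\afE_{i,j}[\bfl])^{a_{i,j}}\;0[\bfj]\;\prod_{(i,j)\in\sL^-}(\afE_{i,j}[\bfl])^{a_{i,j}}
\]
form a $\mbq$-basis of $\Vq$. Now take the PBW basis of $\afuglq$ with \emph{powers} $\prod_i\afE_{i,i}^{j_i}$ in the middle (not binomials). Since $\xi(\afE_{i,j})=\afE_{i,j}[\bfl]$ and $\xi(\afE_{i,i})=0[\afbse_i]$, the image of each PBW monomial is literally the corresponding product above, with no lower terms at all. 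Thus $\xi$ sends a basis to a basis and is an isomorphism. Your approach of separately proving surjectivity and then injectivity via triangularity would also succeed, but this direct basis-to-basis argument is shorter and sidesteps the bookkeeping you flagged as the main obstacle.
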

\begin{proof}
We first prove (1). Let $\sL^+=\{(i,j)\mid
1\leq i\leq n,\ j\in\mbz,\ i<j\}$ and $\sL^-=\{(i,j)\mid
1\leq i\leq n,\ j\in\mbz,\ i>j\}$.
By \ref{iotap}(1), \ref{Vzp-Vzm}(1) and \eqref{tau(Vzp)}, the set
$$\bigg\{\prod_{(i,j)\in\sL^+}(\afE_{i,j}[\bfl])^{a_{i,j}}
\,\bigg|\, A\in\afThnp\bigg\}\bigg(resp., \bigg\{\prod_{(i,j)\in\sL^-}(\afE_{i,j}[\bfl])^{a_{i,j}}
\,\bigg|\, A\in\afThnm\bigg\}\bigg)$$
forms a $\mbq$-basis for $\Vqp$(resp., $\Vqm$), where
the products are taken with respect to a fixed total order on $\sL^+$ (resp., $\sL^-$). This, together with \ref{tri decomposition} and  \ref{Vqz}, implies that the set
$$\bigg\{\prod_{(i,j)\in\sL^+}(\afE_{i,j}[\bfl])^{a_{i,j}}
0[\bfj]
\prod_{(i,j)\in\sL^-}(\afE_{i,j}[\bfl])^{a_{i,j}}
\,\bigg|\, A\in\afThnpm,\,\bfj\in\afmbnn\bigg\}$$
forms a $\mbq$-basis for $\Vq$. Thus $\xi$ sends a PBW-basis of $\afuglq$ to a basis of $\Vq$. Consequently, $\xi$ is an algebra isomorphism.

To see (2), by (1) and \ref{Vz-subalgebra}(1), it is enough to prove $\xi(\afuglz)=\Vz$. Recall the algebra homomorphism
$\iota^+:\Hall_\mbq\ra\afuglq$ defined in \ref{iotap}  and the algebra isomorphism $\thp:\Hallz\ra\Vzp$ described in  \ref{Vzp-Vzm}.
The map $\thp$  induces an injective algebra isomorphism $\Hall_\mbq\ra\Vq$, which is also denoted by $\thp$. Since, by \ref{iotap}, $\Hall_\mbq$ is generated by $\afE_{i,j}$ for $i<j$, and since $$\xi\circ\iota^+(u_{{\afE_{i,j},1}})=\xi(\afE_{i,j})
=\afE_{i,j}[\bfl]=\thp(u_{{\afE_{i,j},1}})$$
for $i<j$,
we conclude that $\xi\circ\iota^+=\thp$. So $\Vzp=\thp(\Hall_\mbz)=\xi\circ\iota^+(\Hall_\mbz)=\xi(\afuglzp)$.
Similarly we have $\Vzm=\xi(\afuglzm)$. Furthermore, since $$\xi\left(\prod_{1\leq i\leq n}\bpa{\afE_{i,i}}{\la_i}\right)=0\br\la,$$
we have $\Vzz=\xi(\afuglzz)$. Thus, by \ref{tri decomposition} we conclude that
$$\Vz=\Vzp\Vzz\Vzm=
\xi(\afuglzp\afuglzz\afuglzm)=\xi(\afuglz),$$
proving (2).
\end{proof}

\begin{Coro}\label{Hopf Z-subalgebra}
$\afuglz$ is a $\mbz$-Hopf subalgebra of $\afuglq$ with comultiplication given by
\begin{equation*}
\begin{split}
\Dt(\iota^+(u_{\la,1}))
&=\sum_{\la^{(1)},\la^{(2)}\in\afmbnn\atop\la=\la^{(1)}+\la^{(2)}}
\iota^+(u_{\la^{(1)}})\ot\iota^+(u_{\la^{(2)}})\\
\Dt(\iota^-(u_{\la,1}))
&=\sum_{\la^{(1)},\la^{(2)}\in\afmbnn\atop\la=\la^{(1)}+\la^{(2)}}
\iota^-(u_{\la^{(1)}})\ot\iota^-(u_{\la^{(2)}})\\
\Dt\left(\bpa{\afE_{i,i}}{t}\right)&=\sum_{0\leq j\leq t}\bpa{\afE_{i,i}}{j}\ot\bpa{\afE_{i,i}}{t-j}
\end{split}
\end{equation*}
for $\la\in\afmbnn$, $1\leq i\leq n$ and $t\in\mbn$, where $u_{\la,1}=u_{[S_\la]}\ot 1$.
\end{Coro}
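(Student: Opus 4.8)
The plan is to deduce the Hopf structure on $\afuglz$ by transporting it through the known structures already in hand. First I would observe that in \ref{Hopf algebra isomorphism} we have the Hopf algebra isomorphism $\iota^{\geq 0}:\ol{\Hallp_\mbq}\ra\afuglqpz$, so the comultiplication formulas \eqref{eq 3-Hallp} and \eqref{eq 2-Hallp} in $\Hallp$, specialized at $\up=1$ and $K_i=1$, immediately give the stated formulas for $\Dt(\iota^+(u_{\la,1}))$ and $\Dt\bpa{\afE_{i,i}}{t}$, since $\iota^{\geq 0}$ carries $\ti u_\la^+$ (which at $\up=1$ equals $u_\la^+$) to $\iota^+(u_{\la,1})$ and carries $\leb{K_i;0\atop t}\rib_1$ to $\bpa{\afE_{i,i}}{t}$ by \eqref{eq Hopf algebra isomorphism}. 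The formula for $\Dt(\iota^-(u_{\la,1}))$ then follows from the one for $\iota^+$ by applying the standard anti-automorphism of $\afuglq$ interchanging $\afE_{i,j}\leftrightarrow\afE_{j,i}$, or equivalently from the dual statement on the negative Borel part.

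Next, the real content is that $\afuglz$ is stable under $\Dt$, $\ep$ and $\sg$, i.e. that it forms a $\mbz$-Hopf subalgebra of $\afuglq$. For the Borel part $\afuglzp\afuglzz$ I would argue that it is the image under $\iota^{\geq 0}$ of the specialized integral form $\ol{\Hallp_\mbz}$ (where $\Hallp_\mbz=\Hallp\ot_\sZ\mbz$), and \ref{Hallp} already tells us $\Hallp$ is a $\sZ$-Hopf subalgebra; since base change and specialization of $\up$ to $1$ preserve the Hopf axioms, $\ol{\Hallp_\mbz}$ is a $\mbz$-Hopf algebra and hence so is its isomorphic image $\afuglzp\afuglzz=\afuglzp\afuglzz$. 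Symmetrically the negative Borel part $\afuglzm\afuglzz$ is a $\mbz$-Hopf subalgebra. Finally, since by \ref{realization of afuglz}(2) we already know $\afuglz=\afuglzp\afuglzz\afuglzm$ is a $\mbz$-subalgebra of $\afuglq$, and it is generated as an algebra by $\afuglzp\afuglzz$ together with $\afuglzm$, both of which are closed under $\Dt$, $\ep$, $\sg$, and since $\Dt$ is an algebra map while $\sg$ is an anti-algebra map, it follows that $\afuglz$ is closed under all three — so it is a $\mbz$-Hopf subalgebra.

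I expect the main obstacle to be the antipode: $\sg$ is only an anti-homomorphism, so closure of a generating set does not instantly give closure of the generated subalgebra unless one is careful. The clean way around this is to note $\sg(xy)=\sg(y)\sg(x)$, so the $\mbz$-span of products of $\sg$-images of the generators is again inside $\afuglz$ provided $\afuglz$ is stable under the flip of tensor factors — which it is, because $\afuglzm\afuglzz$ and $\afuglzp\afuglzz$ are each $\sg$-stable and $\afuglz=(\afuglzp\afuglzz)\cdot\afuglzm$. Alternatively, and perhaps more transparently, one applies the anti-automorphism $\afE_{i,j}\leftrightarrow\afE_{j,i}$ of $\afuglq$, which swaps $\afuglzp$ and $\afuglzm$ and fixes $\afuglzz$, hence preserves $\afuglz$, and conjugating $\sg$ by it reduces the antipode statement on $\afuglzm$ to the one on $\afuglzp$ already handled via \ref{Hallp}. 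A secondary technical point to check is that the counit condition $\ep(\afuglz)\subseteq\mbz$ holds, but this is immediate since $\ep$ sends $\iota^\pm(u_{\la,1})$ to $0$ for $\la\neq 0$ and $\bpa{\afE_{i,i}}{t}$ to $0$ for $t\geq 1$, by \ref{Green Xiao}(b) and \eqref{eq Hopf algebra isomorphism}.
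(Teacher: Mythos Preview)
Your proposal is correct and follows essentially the same route as the paper: deduce the comultiplication formulas and Hopf closure for the positive/zero parts from \ref{Hallp} and \ref{Hopf algebra isomorphism} (via \eqref{eq 2-Hallp}, \eqref{eq 3-Hallp}, \eqref{eq 4-Hallp}, \eqref{eq Hopf algebra isomorphism}), then use the anti-automorphism $\Phi:\afE_{i,j}\mapsto\afE_{j,i}$ to handle the negative part, and finally invoke \ref{realization of afuglz}(2) to know $\afuglz$ is already a subalgebra. The paper's proof is terser---it treats $\afuglzp$ and $\afuglzz$ separately rather than the Borel $\afuglzp\afuglzz$, and leaves the final ``generators $\Rightarrow$ whole algebra'' step for $\Dt$ and $\sg$ implicit---whereas you spell out the antipode and counit checks; your remark about ``flip of tensor factors'' is a bit garbled, but the correct argument you give immediately after (that $\sg$ is an anti-homomorphism and $\afuglz$ is a subalgebra, so closure on generators suffices) is exactly what is needed.
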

\begin{proof}
By \eqref{eq 2-Hallp}, \eqref{eq 3-Hallp}, \eqref{eq 4-Hallp}, \eqref{eq Hopf algebra isomorphism} and \ref{Hopf algebra isomorphism}, $\afuglzp$ and $\afuglzz$ are $\mbz$-Hopf subalgebras of $\afuglq$. Clearly, there is a natural algebra anti-isomorphism
$$\Phi:\afuglq\ra\afuglq(\afE_{i,j}\map\afE_{j,i}\ \forall i,j).$$
Since $\Phi(\afuglzp)=\afuglzm$, and $\Phi$ preserves comultiplication and antipode, $\afuglzm$ is also a $\mbz$-Hopf subalgebra of $\afuglq$. The proof is completed.
\end{proof}

Finally, we will prove that $\xi_r:\afuglz\ra\afSrmbz$ is surjective.
By restriction, the algebra homomorphism $\hzr:\hsKq\ra\afSrmbq$ defined in \eqref{hzr} induces an algebra
homomorphism
\begin{equation}\label{zr}
\zr:=\hzr|_{\Vq}:\Vq\ra\afSrmbq.
\end{equation}
Clearly,  $\zr(A\br\la)=A\br{\la,r}$
for $A\in\afThnpm$ and $\la\in\afmbnn$.
\begin{Lem}\label{zr(Vz)}
The  algebra
homomorphism $$\zr:\Vq\ra\afSrmbq$$ is surjective and we have $\zr(\Vz)=\afSrmbz$.
\end{Lem}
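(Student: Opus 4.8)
The plan is to prove surjectivity of $\zr:\Vq\ra\afSrmbq$ first, and then to deduce $\zr(\Vz)=\afSrmbz$ by a basis argument. For surjectivity, recall from \ref{tri-affine Schur algebras} that $\afSrmbq$ has $\mbq$-basis $\{A^+(\bfl,r)[\diag(\la)]A^-(\bfl,r)\mid A\in\afThnpm,\,\la\in\afLanr,\,\la\geq\bfsg(A)\}$. Since $\zr(\afE_{i,i+1}^{\text{-type}}\br\bfl)$, more precisely $\zr(A^+\br\bfl)=A^+(\bfl,r)$ when $A^+\in\afThnp$ (using $\zr(A\br\la)=A\br{\la,r}$ together with $A\br{\bfl,r}$ and the fact that $\etarp(\ti u_A)=A(\bfl,r)$), and similarly $\zr(A^-\br\bfl)=A^-(\bfl,r)$, and $\zr(0\br\la)$ is the appropriate diagonal element of $\afSrmbq$, one sees directly that $\zr$ hits a generating set of $\afSrmbq$; indeed $\zr(A^+\br\bfl\,0\br\la\,A^-\br\bfl)$ produces $A^+(\bfl,r)[\diag(\la)]A^-(\bfl,r)$ up to terms already handled, so $\zr$ is surjective. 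An even cleaner route: $\zr=\hzr|_{\Vq}$ and $\hzr$ is surjective by \ref{dzr}/\eqref{hzr}; combined with the fact (from \ref{Vz-subalgebra}(2) and \ref{xi}/\ref{realization of afuglz}) that $\Vq$ contains the generators $\afE_{k,l}[\bfl]$ and $0[\afbse_i]$, whose images under $\zr$ are $\xi_r(\afE_{k,l})$ and $\xi_r(\afE_{i,i})$, surjectivity of $\zr$ is just surjectivity of $\xi_r$ from \eqref{xir}.

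The harder half is the integral statement $\zr(\Vz)=\afSrmbz$. For the inclusion $\zr(\Vz)\subseteq\afSrmbz$, note that $\zr(A\br\la)=A\br{\la,r}$, and by the very definition of $A\br{\la,r}=\sum_{\mu\in\afLa(n,r-\sg(A))}\binom{\mu}{\la}[A+\diag(\mu)]_1$ this lies in $\afSrmbz$ (the binomial coefficients are integers), so $\zr$ carries the $\mbz$-basis $\fB$ of $\Vz$ into $\afSrmbz$. For the reverse inclusion $\afSrmbz\subseteq\zr(\Vz)$, I would use \ref{tri-affine Schur algebras}: the elements $A^+(\bfl,r)[\diag(\la)]A^-(\bfl,r)$ with $\la\geq\bfsg(A)$ form a $\sZ$-basis, hence a $\mbz$-basis after specialization, of $\afSrmbz$; but $A^+(\bfl,r)=\zr(A^+\br\bfl)$ with $A^+\br\bfl\in\Vzp\subseteq\Vz$, $[\diag(\la)]=\zr$ of a suitable element of $\Vzz$ (namely a $\mbz$-combination of $0\br\bt$'s — here one uses that the diagonal idempotents $[\diag(\la)]_1$ lie in the image of $\Vzz$ under $\zr$), and $A^-(\bfl,r)=\zr(A^-\br\bfl)$ with $A^-\br\bfl\in\Vzm\subseteq\Vz$; since $\Vz$ is a $\mbz$-algebra by \ref{Vz-subalgebra}(1) and $\zr$ is an algebra homomorphism, each such basis element lies in $\zr(\Vz)$, whence $\afSrmbz\subseteq\zr(\Vz)$.

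There is one subtlety I expect to be the main obstacle: matching the $\afmbzn$-indexed sums in $\hsKq$ with the finite $\afLanr$-indexed sums in $\afSrmbq$ under $\hzr$. Concretely, $\zr(A\br\la)=\sum_{\mu\in\afLa(n,r-\sg(A))}\binom{\mu}{\la}[A+\diag(\mu)]_1$ because $\hzr$ sends $[A+\diag(\mu)]_1$ to $0$ unless $\sg(A+\diag(\mu))=r$; so one must verify that the restriction of $\hzr$ to $\Vz$ genuinely lands in $\afSrmbz$ and that its image contains the stated $\sZ$-basis after specializing $\up\to 1$. This requires knowing that $\bfsg(A)\in\afLanr$ can be achieved, i.e.\ that for $A\in\afThnpm$ there exist $\la\in\afLanr$ with $\la\geq\bfsg(A)$ and $r$ large — but here $r$ is fixed, so one instead argues: every $[B]_1\in\afSrmbz$ with $B\in\afThnr$ appears, via \ref{tri-affine Schur algebras}, as $B^+(\bfl,r)[\diag(\la)]B^-(\bfl,r)$ plus strictly lower terms with the same row/column sums, and then one induces on $\p$ exactly as in the proof of \ref{Vz-subalgebra} to peel off the lower terms, all of which are again in $\zr(\Vz)$ by the induction hypothesis. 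Thus the argument is: (i) $\zr$ surjective (immediate from \ref{dzr} or \ref{xir}); (ii) $\zr(\fB)\subseteq\afSrmbz$ (by definition of $A\br{\la,r}$); (iii) a downward induction on the $\p$-order using \ref{tri-affine Schur algebras} to show $[B]_1\in\zr(\Vz)$ for all $B\in\afThnr$, giving $\afSrmbz\subseteq\zr(\Vz)$; hence equality.
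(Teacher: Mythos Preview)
Your approach is correct and is essentially the paper's, but you are making it harder than necessary and the final ``subtlety'' paragraph is superfluous.

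The paper's proof hinges on one clean observation you dance around but never state: for $\la\in\afLanr$ one has $0\br{\la,r}=[\diag(\la)]_1$ exactly (not merely up to a $\mbz$-combination).  Indeed, $0\br{\la,r}=\sum_{\mu\in\afLa(n,r)}\binom{\mu}{\la}[\diag(\mu)]_1$, and $\binom{\mu}{\la}\neq 0$ forces $\mu\geq\la$, while $\sg(\mu)=\sg(\la)=r$ then forces $\mu=\la$.  Hence $\zr(\Vzz)=\spann_\mbz\{[\diag(\la)]_1\mid\la\in\afLanr\}$ on the nose.  Combining this with the triangular decomposition $\Vz=\Vzp\Vzz\Vzm$ from \ref{tri decomposition} gives
\[
\zr(\Vz)=\spann_\mbz\{A^+(\bfl,r)[\diag(\la)]_1A^-(\bfl,r)\mid A\in\afThnpm,\ \la\in\afLanr\},
\]
which by \ref{tri-affine Schur algebras} equals $\afSrmbz$.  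That is the entire proof: no downward induction on $\p$ is needed, and the worry about ``$\la\geq\bfsg(A)$ for fixed $r$'' never arises because the span over \emph{all} $\la\in\afLanr$ already gives everything.  Your detour through surjectivity of $\xi_r$ for the $\mbq$-statement is also avoidable: the $\mbq$-surjectivity is immediate from the integral equality just proved.
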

\begin{proof}
Since $0\br{\la,r}=[\diag(\la)]$ for $\la\in\afLanr$, we have $$\zr(\Vzz)=\spann_{\mbz}\{0\br{\la,r}\mid\la\in\afmbnn\}
=\spann_{\mbz}\{[\diag(\la)]\mid\la\in\afLanr\}.$$
Thus, by \ref{tri decomposition} and \ref{tri-affine Schur algebras},
\begin{equation*}
\begin{split}
\zr(\Vz)&=\spann_\mbz\{A^+\br{\bfl,r}0\br{\la,r}
A^-\br{\bfl,r}\mid A\in\afThnpm,\,\la\in\afmbnn\}\\
&=\spann_\mbz\{A^+\br{\bfl,r}[\diag(\la)]
A^-\br{\bfl,r}\mid A\in\afThnpm,\,\la\in\afLanr\}\\
&=\afSrmbz,
\end{split}
\end{equation*}
proving the assertion.
\end{proof}

\begin{Thm}\label{affine Schur-Weyl duality}
The restriction of $\xi_r$ to $\afuglz$ gives a surjective $\mbz$-algebra homomorphism
$$\xi_r:\afuglz\twoheadrightarrow\afSrmbz.$$
\end{Thm}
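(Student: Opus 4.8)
The plan is to identify $\xi_r$ with the composite $\afuglq\xrightarrow{\ \xi\ }\Vq\xrightarrow{\ \zr\ }\afSrmbq$ and then read off the statement from \ref{realization of afuglz}(2) and \ref{zr(Vz)}. All the substantive work has already been carried out in those two results, so the proof amounts to a short bookkeeping argument on generators.

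First I would observe that both $\xi_r$ (by \eqref{xir}) and $\zr\circ\xi$ (as the composite of the algebra isomorphism $\xi$ from \ref{xi} and \ref{realization of afuglz}(1) with the algebra homomorphism $\zr$ from \eqref{zr}) are algebra homomorphisms $\afuglq\to\afSrmbq$. Since $\afuglq$ is generated by the $\afE_{i,j}$ with $1\leq i\leq n$ and $j\in\mbz$, it suffices to check that $\xi_r$ and $\zr\circ\xi$ agree on these elements. For $i\neq j$ we have $\zr\circ\xi(\afE_{i,j})=\zr(\afE_{i,j}[\bfl])=\afE_{i,j}[\bfl,r]=\xi_r(\afE_{i,j})$, using the defining formula for $\zr$ together with $\xi(\afE_{i,j})=\afE_{i,j}[\bfl]$. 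For $i=j$, using the identity $0[\afbse_i]=0\br{\afbse_i}$ recorded after \ref{formula in Wq} and $\xi(\afE_{i,i})=0[\afbse_i]$, we get $\zr\circ\xi(\afE_{i,i})=\zr(0\br{\afbse_i})=0\br{\afbse_i,r}=0[\afbse_i,r]=\xi_r(\afE_{i,i})$. Hence $\xi_r=\zr\circ\xi$ on all of $\afuglq$.

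Then, restricting this identity to the $\mbz$-subalgebra $\afuglz\subseteq\afuglq$ (which is a $\mbz$-subalgebra by \ref{realization of afuglz}(2)) and using $\xi(\afuglz)=\Vz$ from \ref{realization of afuglz}(2) together with $\zr(\Vz)=\afSrmbz$ from \ref{zr(Vz)}, we obtain
\begin{equation*}
\xi_r(\afuglz)=\zr\bigl(\xi(\afuglz)\bigr)=\zr(\Vz)=\afSrmbz,
\end{equation*}
which simultaneously shows that $\xi_r$ carries $\afuglz$ into $\afSrmbz$ and that the resulting restriction $\xi_r\colon\afuglz\to\afSrmbz$ is surjective.

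There is essentially no obstacle here: the proof is a two-line consequence of the realization theorem \ref{realization of afuglz} and Lemma \ref{zr(Vz)}. The only point that requires a moment's care is the verification $\xi_r=\zr\circ\xi$ on generators, and within that the matching of $\zr(0[\afbse_i])$ with $0[\afbse_i,r]$, which relies on the elementary identity $0[\afbse_i]=0\br{\afbse_i}$ noted after \ref{formula in Wq}.
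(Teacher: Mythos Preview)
Your proof is correct and follows essentially the same approach as the paper: establish $\xi_r=\zr\circ\xi$ by checking on the generators $\afE_{i,j}$, then conclude $\xi_r(\afuglz)=\zr(\xi(\afuglz))=\zr(\Vz)=\afSrmbz$ via \ref{realization of afuglz}(2) and \ref{zr(Vz)}. The only cosmetic difference is that for the diagonal case the paper writes $\zr(0[\afbse_i])=0[\afbse_i,r]$ directly from the definition of $\hzr$, whereas you route through the identity $0[\afbse_i]=0\br{\afbse_i}$; both are equally valid.
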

\begin{proof}
Clearly, $\zr\circ\xi(\afE_{i,j})=\zr(\afE_{i,j}[\bfl])
=\afE_{i,j}[\bfl,r]=
\xi_r(\afE_{i,j})$ for any $i\not=j\in\mbz$ and $\zr\circ\xi(\afE_{i,i})=\zr(0[\afbse_i])=0[\afbse_i,r]=
\xi_r(\afE_{i,i})$. Thus we have the following
commutative diagram:
$$\xymatrix{
\afuglq \ar[rr]^{\xi}\ar[d]_{\xi_r}  &  & \Vq \ar [dll]^{\zr} \\
\afSrmbq & &
}$$
It follows from \ref{realization of afuglz} and \ref{zr(Vz)} that $\xi_r(\afuglz)=\zr\circ\xi(\afuglz)=\zr(\Vz)=\afSrmbz$.
\end{proof}

Let $k$ be a field. We denote $\afuglk=\afuglz\ot k$ and
$\afSrk=\afSrmbz\ot k$.
\begin{Coro}
For any field $k$, the algebra homomorphism
$$\xi_r\ot id:\afuglk\twoheadrightarrow\afSrk
\cong\End_{k\affSr}\left(\bop_{\la\in\afLanr}k\ul{{\fS}_\la}\affSr
\right)$$
is surjective.
\end{Coro}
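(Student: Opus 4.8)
The plan is to obtain the Corollary as a straightforward base change of Theorem~\ref{affine Schur-Weyl duality}. By definition $\afuglk=\afuglz\ot_\mbz k$ and $\afSrk=\afSrmbz\ot_\mbz k$, and the homomorphism $\xi_r\ot\id$ in the statement is exactly the map obtained from the $\mbz$-algebra homomorphism $\xi_r\colon\afuglz\to\afSrmbz$ of Theorem~\ref{affine Schur-Weyl duality} by applying $-\ot_\mbz k$. Since that theorem asserts that $\xi_r\colon\afuglz\to\afSrmbz$ is surjective and $-\ot_\mbz k$ is right exact, the induced map $\xi_r\ot\id\colon\afuglk\to\afSrk$ is again surjective. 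That is the whole content; it remains only to match $\afSrk$ with the endomorphism algebra on the right.

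For that identification, recall from \S4 that $\afSr=\End_{\afHr}\bigl(\bop_{\la\in\afLanr}x_\la\afHr\bigr)$, with $x_\la=\sum_{w\in\fS_\la}T_w$ and $\afHr$ free over $\sZ$ with basis $\{T_w\}_{w\in\affSr}$. Specializing $\up$ to $1$ turns the quadratic relation into $T_{s_i}^2=1$, so $\afHr\ot_\sZ k\cong k\affSr$ via $T_w\mapsto w$; under this isomorphism $x_\la\mapsto\ul{\fS_\la}$, and the $\sZ$-free module $x_\la\afHr$ (with $\sZ$-basis $\{x_\la T_d\mid d\in\afmsD_\la\}$) base-changes to $k\ul{\fS_\la}\affSr$. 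The base-change property of affine Schur algebras (cf.\ \cite{Gr99,VV99,DDF}) then gives the canonical isomorphism $\afSrk=\afSr\ot_\sZ k\cong\End_{k\affSr}\bigl(\bop_{\la\in\afLanr}k\ul{\fS_\la}\affSr\bigr)$, through which $\xi_r\ot\id$ becomes the asserted surjection $\afuglk\twoheadrightarrow\End_{k\affSr}\bigl(\bop_{\la\in\afLanr}k\ul{\fS_\la}\affSr\bigr)$.

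No new computation is required. The only essential input is the integral surjectivity of Theorem~\ref{affine Schur-Weyl duality}, and the only mildly technical point is the base-change compatibility of the endomorphism presentation of the affine Schur algebra, which is part of the standard machinery of \S4 and of the cited references rather than anything new to the present argument.
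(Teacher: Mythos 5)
Your argument is correct and is exactly the implicit argument the paper intends: the paper gives no separate proof of this corollary, leaving it as the immediate base-change consequence of Theorem~\ref{affine Schur-Weyl duality}, and your use of right exactness of $-\ot_\mbz k$ together with the standard identification of $\afSrk$ with the endomorphism algebra of the permutation module supplies exactly the missing details. Nothing more is needed.
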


\end{document}